\newtheorem{theorem}{Theorem}[section]
\newtheorem{theorem*}{Theorem}
\newtheorem{corollary}[theorem]{Corollary}
\newtheorem{corollary*}[theorem*]{Corollary}
\newtheorem{lemma}[theorem]{Lemma}
\newtheorem{proposition}[theorem]{Proposition}
\theoremstyle{definition}
\newtheorem{definition}[theorem]{Definition}
\newtheorem{remark}[theorem]{Remark}
\newtheorem*{question*}{Question}
\newtheorem*{conjecture*}{Conjecture}
\newtheorem*{notation*}{Notation}
\newtheorem*{claim*}{Claim}
\begin{document}
\title[Combinatorial formulas for Kazhdan-Lusztig polynomials]{Combinatorial formulas for Kazhdan-Lusztig polynomials with respect to $W$-graph ideals}
\author[Qi Wang]{Qi Wang}
\address{School of Mathematics\\Shanghai University of Finance and Economics\\777 Guoding Road\\Shanghai 200433\\China}
\email{q.wang@163.sufe.edu.cn}
\keywords{Hecke algebra, $W$-graph, $W$-graph ideal, Kazhdan-Lusztig polynomial.}

\begin{abstract}
Let $(W,S,L)$ be a weighted Coxeter system and $J$ a subset of $S$, Yin \cite{Y15} introduced the weighted $W$-graph ideal $E_J$ and the weighted Kazhdan-Lusztig polynomials $ \left \{ P_{x,y} \mid x,y\in E_J\right \}$. In this paper, we study the combinatorial formulas for $P_{x,y}$, which will extend the results of Brenti \cite{B94} and Deodhar \cite{D97}.
\end{abstract}

\maketitle
\section{Introduction}
Let $(W,S)$ be a Coxeter system, we denote by $\ell$ and $\leqslant$ the length function and the Bruhat order on $W$, respectively.

$W$-graph is introduced by Kazhdan and Lusztig \cite{KL79}. A $W$-graph provides a method for constructing a matrix representation of $W$ and the entries of this matrix are called Kazhdan-Lusztig polynomials. However, these polynomials are not easy to compute.

Brenti \cite{B94} provided a combinatorial algorithm for Kazhdan-Lusztig polynomials, which is depended only on $R$-polynomials. Later, Deodhar \cite{D87} introduced the parabolic Kazhdan-Lusztig polynomials associated with a standard parabolic subgroup $W_J$ of $W$, and showed two analogous combinatorial formulas for these parabolic cases in \cite{D97}. Brenti's result corresponds exactly to the case  $J=\o $. Then, Tagawa \cite{T99} generalized these formulas to weighted parabolic Kazhdan-Lusztig polynomials.

Howlett and Nguyen \cite{HN12} introduced the concept of $W$-graph ideals and showed that a $W$-graph can be constructed from a $W$-graph ideal $E_J\subseteq  W$, where $W$ is equipped with the left weak Bruhat order $\leqslant _{L}$. They constructed the Kazhdan-Lusztig polynomials with respect to a $W$-graph ideal. Deodhar's construction exactly corresponds to the case $E_J=D_J$, where $D_{J}$ is the set of minimum coset representatives of $W_{J}$.

Recently, Yin (\cite{Y15} and \cite{Y16}) generalized the construction of $W$-graph ideals to a weighted Coxeter system $(W,S,L)$. He showed that there exists a pair of dual modules $M(E_J,L)$ and $\widetilde{M}(E_J,L) $ with respect to a given $W$-graph ideal $E_J$. Similarly, Yin constructed the weighted Kazhdan-Lusztig polynomials $\left \{ P_{x,y}\mid x,y\in E_J \right \}$ and the inverse weighted Kazhdan-Lusztig polynomials $\left \{ Q_{x,y}\mid x,y\in E_J \right \}$.

In this paper, we continue the work in \cite{Y15} and \cite{Y16} with slightly different on the ground ring. We consider Hecke algebras over the ring of Laurent polynomials $\mathbb{Z}[q_s^{1/2}, q_s^{-1/2}]$, where $q_{s}=q^{L(s)}$ and $L$ is a weight function.

This paper is organized as follows. In Section 2, we review basic concepts concerning weighted Coxeter groups and Hecke algebras. In Section 3 and 4, we modify the concept of $W$-graph ideals to our setting and the weighted Kazhdan-Lusztig polynomials on $M(E_J,L)$ are also considered. In Section 5 and 6, we devote to giving combinatorial formulas for $\left \{ P_{x,y}\mid x,y\in E_J \right \}$ and coefficients of those polynomials. In Section 7, we show similar combinatorial formulas for the inverse weighted Kazhdan-Lusztig polynomials.

\noindent\textit{Acknowledgements.} This project was considered during my master's course. I am very grateful to my supervisor, Prof. Yunchuan Yin, for taking me into this field and giving me a lot of help in writing this paper. I am also very grateful to him for his enthusiasm to help me study abroad.

\section{Preliminaries}

In this section, we follow the conventions in \cite{Y15}. Let $\Gamma$ be a totally ordered abelian group with zero element $\bm{0}$ and $\leqslant $ the order on $\Gamma$.

Let $\mathbb{Z}[\Gamma ]$ be a free $\mathbb{Z}$-module with a basis set $\left \{ q^{\gamma }\mid \gamma \in \Gamma  \right \}$ and the multiplication is given by $q^{\gamma }q^{\zeta }=q^{\gamma +\zeta }$,  where $q$ is an indeterminant. For any $f\in\mathbb{Z}[\Gamma ]$, we denote by $\left [ q^{\gamma } \right ]$ the coefficient of $f$ on $q^{\gamma}$ such that $f=\sum_{\gamma \in \Gamma }\left [ q^{\gamma } \right ]q^\gamma $. If $f\neq 0$, then the degree of $f$ is defined to be $\mathsf{deg}(f):=\text{max}\left \{ \gamma \mid \left [ q^{\gamma } \right ] \neq \bm{0}\right \}$, and we set $\mathsf{deg}(\bm{0})=-\infty $. Then, the map $\mathsf{deg}: \mathbb{Z}[\Gamma ]\rightarrow \Gamma \cup \left \{ -\infty  \right \}$ satisfies $\mathsf{deg}(fg)=\mathsf{deg}(f)+\mathsf{deg}(g)$.

For any reduced word $w=s_{1}s_{2}\cdots s_{k}$, a map $L: W\rightarrow \Gamma $ is called a weight of $W$ if
\begin{center}
$L(w)=L\left (s_{1} \right )+L\left (s_{2}  \right )+\cdots +L\left (s_{k}  \right )$.
\end{center}
Throughout, we assume that $L(s)\geqslant \bm{0}$ for any $s\in S$.

Let $\mathscr{H}:=\mathscr{H}\left ( W,S,L \right )$ be the weighted Hecke algebra corresponding to $(W,S)$ with parameters $\{ q_s^{1/2} \mid s\in S \}$, where $q_s=q^{L(s)}$. It is well-known that $\mathscr{H}$ is a free $\mathbb{Z}[\Gamma ]$-module and $\mathscr{H}$ has a basis set $\left \{ T_{w}\mid w\in W \right \}$. In particular, the multiplication is given by
\begin{center}
$T_{s}T_{w}=
\left\{\begin{aligned}
      &T_{sw}                            &\text{if}\ \ell(sw)>\ell(w)\\
      &q_{s}T_{sw}+(q_{s}-1)T_{w}  &\text{if}\ \ell(sw)<\ell(w).
\end{aligned}\right.$
\end{center}

We denote by $\bar{\ \ \ }: \mathbb{Z}[\Gamma ]\rightarrow\mathbb{Z}[\Gamma ]$ the automorphism of $\mathbb{Z}[\Gamma ]$ induced by sending $\gamma$ to $-\gamma$ for any $\gamma\in \Gamma$. This can be extended to a ring involution $\bar{\ \ \ }: \mathscr{H}\rightarrow \mathscr{H}$ such that
\begin{center}
$\sum\limits_{w\in W}^{ }\overline{a_{w}\cdot T_{w}}=\sum\limits_{w\in W}^{ }\overline{a_{w}}\cdot \overline{T_{w}}$,
\end{center}
and $\overline{T_{s}}=q_{s}^{-1}T_{s}+(q_{s}^{-1}-1)$ for any $s\in S$.

For any $J\subseteq S$, let $W_{J}:=\left \langle J \right \rangle$ be the corresponding parabolic subgroup of $W$. We denote the set of minimum coset representatives of $W_{J}$ by
\begin{center}
$D_{J}=\left \{ w\in W\mid \ell (ws)> \ell (w)\ \text{for all}\ s\in J\right \}$.
\end{center}
Let $\leqslant _L$ be the left weak Bruhat order on $W$. We say $x\leqslant _L y$ if and only if $y=zx$ and $\ell(y) =\ell(z) +\ell(x)$ for some $z\in W$. If this is the case, then $x$ is said to be a \emph{\textbf{suffix}} of $y$.

\begin{definition}(\cite[Definition 2.3]{Y15}) Let $E\subseteq W$, we define
\begin{center}
$\mathsf{Pos}\left (E\right ):=\left \{ s\in S\mid \ell(xs)> \ell (x)\ \text{for all}\ x\in E \right \} $.
\end{center}
\end{definition}

Obviously, $\mathsf{Pos}\left (E \right )$ is the largest subset $J$ of $S$ satisfying $E\subseteq D_{J}$. Let $E$ be an \emph{ideal} in poset $(W,\leqslant _L)$, that is, $E$ is a subset of $W$ such that if $u\in W$ is a suffix of $v\in E$, then $u=v$. This implies that $\mathsf{Pos}(E) =S/E=\left \{ s\in S|s\notin E \right \}$.

Let $J$ be a subset of $\mathsf{Pos}(E)$ such that $E\subseteq D_J$. In this context, we shall denote by $E_{J}$ the set $E$ with respect to $J$. For each $y\in E_{J}$, we define the following subsets of $S$.
\begin{center}
$\begin{aligned}
  SD\left ( y \right )&:=\left \{s\in S \mid \ell(sy)<\ell (y)\right \},\\
  SA\left ( y \right )&:=\left \{s\in S \mid \ell(sy)>\ell (y)\ \text{and}\ sy\in E_{J} \right \},\\
  WD\left ( y \right )&:=\left \{s\in S \mid \ell(sy)>\ell (y)\ \text{and}\ sy\notin D_{J} \right \},\\
  WA\left ( y \right )&:=\left \{s\in S \mid \ell(sy)>\ell (y)\ \text{and}\ sy\in D_{J}/E_{J} \right \}.
  \end{aligned}$
\end{center}
It is obvious that for each $y\in E_{J}$, each $s\in S$ appears in exactly one of the four sets defined above (since $E_J\subseteq D_{J}$).

\section{$W$-graph ideal modules}

We recall from \cite{Y16} the definition of $W$-graph ideals with some modifications.

\begin{definition}(\cite[definition 1.1]{Y16}, Modified) A $W$-graph consists of the following data.
\begin{enumerate}
  \item A vertex set $\Lambda $ together with a map $I$ which assigns a subset $I(x)\subseteq S$ to each vertex $x\in\Lambda $.
  \item For each $s\in S$ with $L(s)=\bm{0}$, there is an edge: $x\rightarrow sx$. For each $s\in S$ with $L(s)>\bm{0}$, there is a collection of edges such that
\begin{center}
$\overline{\mu _{x,y}^{s}}=\mu _{x,y}^{s}$ and $\left \{ \mu _{x,y}^{s}\in \mathbb{Z}[\Gamma  ]\mid  x,y\in \Lambda , s\in I(x), s\notin I(y) \right \}$.
\end{center}
  \item Let $[\Lambda ]_{\mathbb{Z}[\Gamma  ]}$ be a free $\mathbb{Z}[\Gamma  ]$-module with basis $\left \{ b_{y}\mid y\in \Lambda  \right \}$. We require that the map $T_{s}\rightarrow \tau _{s}$ gives an representation of $\mathscr{H}$ with the following multiplication.
  \begin{center}
    $\tau_{s}(b_{y})=
     \left\{\begin{aligned}
     &{b_{sy}}                                                                    & \text{if}\ L(s)&=\bm{0};\\
     &{-b_{y}}                                                                    & \text{if}\ L(s)&>\bm{0},s\in I(y);\\
     &{q_{s}b_{y}+q_{s}^{1/2}\sum_{x\in \Lambda ,s\in I(x)}^{ }\mu _{x,y}^{s}b_{x}} & \text{if}\ L(s)&>\bm{0},s\notin I(y).
     \end{aligned}\right.$
  \end{center}
\end{enumerate}
\end{definition}

\begin{definition}(\cite[Definition 2.4]{Y16}, Modified) Let $(W,S,L)$ be a weighted Coxeter system and $E_J\subseteq W$, we call $E_{J}$ a $W$-graph ideal if the following holds.
\begin{enumerate}
  \item There is a $\mathbb{Z}[\Gamma ]$-free $\mathscr{H}$-module $M(E_J,L)$ with a basis $\left \{ \Gamma _{y} \mid y\in E_{J}\right \}$ such that
  \begin{center}
   $T_{s}\Gamma _{y}=
      \left\{\begin{aligned}
      &q_{s}\Gamma _{sy}+\left ( q_{s}-1 \right )\Gamma _{y}            & \text{if}\ s&\in SD(y),\\
      &\Gamma _{sy}                                                     & \text{if}\ s&\in SA(y),\\
      &-\Gamma _{y}                                                     & \text{if}\ s&\in WD(y),\\
      &q_{s}\Gamma _{y}-\sum_{z<sy, z\in E_J}^{}r_{z,y}^{s}\Gamma _{z}  & \text{if}\ s&\in WA(y),
      \end{aligned}\right.$
  \end{center}
for some polynomials $r_{z,y}^{s}\in q_s\mathbb{Z}[\Gamma ]$.
  \item The $\mathscr{H}$-module $M(E_J,L)$ admits a $\mathbb{Z}[\Gamma ]$-semilinear involution $\Gamma _{y}\rightarrow \overline{\Gamma _{y}} $ satisfying $\overline{\Gamma _{1}}=\Gamma _{1} $ and $\overline{T_{w}\Gamma _{y}}=\overline{T_{w}}\ \overline{\Gamma _{y}}$ for all $T_{w}\in \mathscr{H}$, $\Gamma _{y}\in M(E_J,L)$.
\end{enumerate}
\end{definition}

We call $M(E_J,L)$ the $W$-graph ideal module and refer to \cite{Y16} for the definition of another $W$-graph ideal module $\widetilde{M}(E_J,L)$.

There exists an algebra homomorphism $\Phi :\mathscr{H}\rightarrow \mathscr{H}$ given by $\Phi \left ( q_{s} \right )=-q_{s}$ and $\Phi \left ( T_{w} \right )=\epsilon _{w}q_{w}\overline{T_{w}}$ for all $s\in S$, $w\in W$, where the bar is the standard involution on $\mathscr{H}$. Furthermore, $\Phi ^{2}=\text{id}$ and $\Phi$ commutes with the bar involution.

\begin{proposition}{\rm{(\cite[Theorem 3.1]{Y16})}} There exists a unique homomorphism $\eta $ from $M(E_J,L)$ to $\widetilde{M}(E_J,L)$ such that $\eta \left ( \Gamma _{1} \right )=\widetilde{\Gamma }_{1}$ and $\eta \left ( T_{w}\Gamma _{y} \right )=\Phi \left ( T_{w} \right )\eta \left (\Gamma _{y} \right )$ for all $T_{w}\in \mathscr{H}$ and $\Gamma _{y}\in M(E_J,L)$. Moreover,
\begin{enumerate}
  \item $\eta $ is a bijection and the inverse $\eta^{-1} $ satisfies properties of $\eta $.
  \item $\eta $ commutes with the involution on $M(E_J,L)$ and $\widetilde{M}(E_J,L)$.
\end{enumerate}
\end{proposition}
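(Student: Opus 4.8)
The plan is to use that both modules are \emph{cyclic}. First I would record that $M(E_J,L)=\mathscr{H}\,\Gamma_1$, and more precisely that $\Gamma_y=T_y\Gamma_1$ for every $y\in E_J$: writing a reduced word $y=s_1s_2\cdots s_k$ and peeling off simple reflections from the left, each element $s_{i+1}\cdots s_k$ is a suffix of $y$, hence lies in $E_J$ (as $E_J$ is an ideal of $(W,\leqslant_L)$), and $s_i\in SA(s_{i+1}\cdots s_k)$, so $\Gamma_{s_i\cdots s_k}=T_{s_i}\Gamma_{s_{i+1}\cdots s_k}$; iterating and collapsing $T_{s_1}\cdots T_{s_k}=T_y$ gives the claim. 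The same argument shows $\widetilde M(E_J,L)=\mathscr{H}\,\widetilde\Gamma_1$. Consequently $M(E_J,L)\cong\mathscr{H}/\mathcal I$ and $\widetilde M(E_J,L)\cong\mathscr{H}/\widetilde{\mathcal I}$ as $\mathscr{H}$-modules, where $\mathcal I=\mathrm{Ann}_{\mathscr{H}}(\Gamma_1)$ and $\widetilde{\mathcal I}=\mathrm{Ann}_{\mathscr{H}}(\widetilde\Gamma_1)$ are left ideals.

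Next I would define $\eta$ by $\eta(h\Gamma_1):=\Phi(h)\,\widetilde\Gamma_1$ for $h\in\mathscr{H}$. This is additive, semilinear with respect to the action of $\Phi$ on $\mathbb{Z}[\Gamma]$, and since $\Phi$ is an algebra homomorphism it satisfies $\eta(h'\cdot h\Gamma_1)=\Phi(h'h)\widetilde\Gamma_1=\Phi(h')\,\eta(h\Gamma_1)$, which is the required intertwining property; and $\eta(\Gamma_1)=\widetilde\Gamma_1$. The map $\eta$ is well defined exactly when $\Phi(\mathcal I)\subseteq\widetilde{\mathcal I}$. Because $\Phi^2=\mathrm{id}$ and the construction is symmetric in the two modules, once one proves $\Phi(\mathcal I)\subseteq\widetilde{\mathcal I}$ the symmetric statement $\Phi(\widetilde{\mathcal I})\subseteq\mathcal I$ gives $\Phi(\mathcal I)=\widetilde{\mathcal I}$, so that $\eta$ is simultaneously well defined and bijective, with inverse $\eta^{-1}(\widetilde h\,\widetilde\Gamma_1)=\Phi(\widetilde h)\,\Gamma_1$ of the same shape. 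Uniqueness of $\eta$ is immediate from $M(E_J,L)=\mathscr{H}\Gamma_1$.

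The substantive point is therefore the inclusion $\Phi(\mathcal I)\subseteq\widetilde{\mathcal I}$, and for this I would first pin down a generating set of the left ideal $\mathcal I$. Comparing the basis $\{T_y\Gamma_1\mid y\in E_J\}$ of $M(E_J,L)$ with the basis $\{T_w\mid w\in W\}$ of $\mathscr{H}$ and using the four defining relations, $\mathcal I$ is generated by $(T_s+1)T_y$ for $y\in E_J,\ s\in WD(y)$, and $(T_s-q_s)T_y+\sum_{z<sy,\,z\in E_J}r^{s}_{z,y}T_z$ for $y\in E_J,\ s\in WA(y)$ (the $SD$- and $SA$-relations merely re-express the equality $\Gamma_z=T_z\Gamma_1$); that these generate all of $\mathcal I$ follows because modulo them every $T_w\Gamma_1$ reduces into $\mathrm{span}\{T_y\Gamma_1\mid y\in E_J\}$ by induction on $\ell(w)$, and over the commutative ring $\mathbb{Z}[\Gamma]$ a surjection onto a free module of rank $|E_J|$ from a module spanned by $|E_J|$ elements is an isomorphism. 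It then remains to apply $\Phi$ to each generator. Using $\Phi(T_w)=\epsilon_w q_w\overline{T_w}$ and $q_s\overline{T_s}=T_s+1-q_s$ one gets $\Phi\big((T_s+1)T_y\big)=-\epsilon_y q_y\,(T_s-q_s)\overline{T_y}$, and a comparable expression for the $WA$-generator; applying these to $\widetilde\Gamma_1$ one must recognise the result as $0$ by means of the defining relations of $\widetilde M(E_J,L)$ — in which the roles of $WD$ and $WA$ are interchanged relative to $M(E_J,L)$ — together with the identity relating $r^{s}_{z,y}$ to its $\widetilde M$-counterpart and the compatibility of the structure polynomials with the bar involution. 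I expect the $WA$-case to be the main obstacle, since it uses at once the explicit module relations of $\widetilde M(E_J,L)$, the behaviour of the $r^{s}_{z,y}$ under the bar involution and under $\Phi$, and the bookkeeping of the Bruhat-order terms $z<sy$.

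Finally, the two ``moreover'' assertions. Bijectivity of $\eta$ and the fact that $\eta^{-1}$ has the same defining property were obtained above from $\Phi(\mathcal I)=\widetilde{\mathcal I}$ and $\Phi^2=\mathrm{id}$. For compatibility with the bar involutions, recall $\overline{\Gamma_1}=\Gamma_1$ and $\overline{\widetilde\Gamma_1}=\widetilde\Gamma_1$, so the semilinear involutions act by $\overline{h\Gamma_1}=\overline{h}\,\Gamma_1$ and $\overline{\widetilde h\,\widetilde\Gamma_1}=\overline{\widetilde h}\,\widetilde\Gamma_1$; since $\Phi$ commutes with the bar involution on $\mathscr{H}$,
\[
\eta\big(\overline{h\Gamma_1}\big)=\eta\big(\overline{h}\,\Gamma_1\big)=\Phi(\overline{h})\,\widetilde\Gamma_1=\overline{\Phi(h)}\,\widetilde\Gamma_1=\overline{\Phi(h)\,\widetilde\Gamma_1}=\overline{\eta(h\Gamma_1)},
\]
and the identical computation with the roles of the two modules exchanged shows that $\eta^{-1}$ also commutes with the bar involutions.
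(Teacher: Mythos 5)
Your overall strategy (cyclicity $\Gamma_y=T_y\Gamma_1$, so $M(E_J,L)\cong\mathscr{H}/\mathcal{I}$ with $\mathcal{I}=\mathrm{Ann}_{\mathscr{H}}(\Gamma_1)$, then define $\eta(h\Gamma_1)=\Phi(h)\widetilde{\Gamma}_1$ and reduce everything to $\Phi(\mathcal{I})\subseteq\widetilde{\mathcal{I}}$) is sound, and the uniqueness, bijectivity and bar-compatibility arguments you give are fine \emph{conditional on} that inclusion. But the inclusion itself is exactly the substantive content of the proposition, and you do not prove it: you list the generators of $\mathcal{I}$ coming from the $WD$- and $WA$-relations, compute $\Phi$ on the $WD$-generator, and then say that ``one must recognise the result as $0$'' in $\widetilde{M}(E_J,L)$, that this needs ``the identity relating $r^{s}_{z,y}$ to its $\widetilde{M}$-counterpart,'' and that you ``expect the $WA$-case to be the main obstacle.'' That is a deferral, not a proof. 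In particular you never write down the defining relations of $\widetilde{M}(E_J,L)$ (the paper itself only cites \cite{Y16} for them), nor the precise relation between the structure polynomials $r^{s}_{z,y}$ of $M(E_J,L)$ and those of $\widetilde{M}(E_J,L)$; without that identity the claim $\Phi(\mathcal{I})\subseteq\widetilde{\mathcal{I}}$ cannot even be checked, and the symmetric inclusion $\Phi(\widetilde{\mathcal{I}})\subseteq\mathcal{I}$, which you invoke for bijectivity, carries the same unperformed verification in the dual direction. So as it stands the existence of $\eta$ is not established.

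For comparison, the paper's (very terse) proof takes a different and more direct route: it exhibits $\eta$ by the closed formula $\eta(\Gamma_y)=\epsilon_y q_y\overline{\widetilde{\Gamma}_y}$ on the standard basis and then checks the intertwining property $\eta(T_w\Gamma_y)=\Phi(T_w)\eta(\Gamma_y)$ and the bar-compatibility directly from the module relations of $M(E_J,L)$ and $\widetilde{M}(E_J,L)$ together with $\Phi(T_w)=\epsilon_w q_w\overline{T_w}$; bijectivity is then immediate since $\eta$ sends a basis to (a unit multiple of the bar of) a basis. If you want to salvage your approach, the fastest repair is to borrow that formula: verify that the map $h\Gamma_1\mapsto\Phi(h)\widetilde{\Gamma}_1$ agrees on each $\Gamma_y=T_y\Gamma_1$ with $\epsilon_y q_y\overline{\widetilde{\Gamma}_y}$ (equivalently, carry out case-by-case the computation of $\Phi(T_s)$ acting on $\epsilon_y q_y\overline{\widetilde{\Gamma}_y}$ in each of the four cases $SD$, $SA$, $WD$, $WA$, using the stated relation between the two families of structure polynomials); only then are well-definedness and hence existence actually proved.
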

\begin{proof}
Let $\eta \left ( \Gamma_y \right )=\epsilon _{y}q_{y}\overline{\widetilde{\Gamma }_{y}}$, then it is easy to see the proposition.
\end{proof}

\section{Kazhdan-Lusztig polynomials with respect to $W$-graph ideals}

In this section, we recall the weighted Kazhdan-Lusztig polynomials introduced by Yin \cite{Y15} and \cite{Y16}. First, let $\Gamma ':=\left \{ L(s)\mid s\in S \right \}$ and we consider the totally order $\leqslant $ on $\Gamma$. Then, we have $\mathbb{Z}[\Gamma' ]\subseteq \mathbb{Z}[\Gamma ]$ and $\mathbb{Z}\left [ \Gamma ' \right ]=\left \{ \prod q^{n_i}_{s_i}\mid s_i\in S,n_i\in \mathbb{Z}\right \}$.

\subsection{The weighted $R$-polynomials on $M(E_J,L)$}
We recall from \cite{Y15} the construction of $\left \{ R_{x,y}\mid x,y\in E_J \right \}$ and $\{ \widetilde{R}_{x,y}\mid x,y\in E_J \} $ with some modifications.
\begin{definition}
There is a unique family of polynomials $R_{x,y}\in \mathbb{Z}[\Gamma ]$ satisfying
   \begin{center}
   $\overline{\Gamma _{y}}=\sum\limits_{x\in E_J}^{}\epsilon _{x}\epsilon _{y}q_{y}^{-1}R_{x,y}\Gamma _{x}$,
   \end{center}
we call them the weighted $R$-polynomials on $M(E_J,L)$.
\end{definition}

We assume that $R_{x,y}=0$ if $x\notin E_J$ or $y\notin E_J$. Then, we have

\begin{proposition}
Let $x,y\in E_J$, then $R_{x,y}=0$ if $x\nleqslant y$, $R_{x,y}=1$ if $x=y$.
\end{proposition}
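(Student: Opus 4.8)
The plan is to establish both claims directly from the defining identity $\overline{\Gamma_y} = \sum_{x \in E_J} \epsilon_x \epsilon_y q_y^{-1} R_{x,y} \Gamma_x$, proceeding by induction on $\ell(y)$. The base case $y = 1$ is immediate: since $\overline{\Gamma_1} = \Gamma_1$ by part (2) of the definition of a $W$-graph ideal, comparing coefficients forces $R_{1,1} = 1$ and $R_{x,1} = 0$ for $x \neq 1$; and since $1$ is a suffix of every element, $x \nleqslant_L 1$ means $x \neq 1$, so both assertions hold. Note that throughout, $\leqslant$ in the statement should be read as the left weak Bruhat order $\leqslant_L$ that is the relevant order on $E_J$ (this is the order for which $E_J$ is an ideal), so "$x \nleqslant y$" means $x$ is not a suffix of $y$.

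For the inductive step, I would pick $y \in E_J$ with $\ell(y) > 0$ and choose $s \in SD(y)$, so that $sy \in E_J$ with $\ell(sy) < \ell(y)$ and $y = s(sy)$ with $\ell(y) = \ell(s) + \ell(sy)$. The strategy is to compute $\overline{\Gamma_y}$ in two ways. On one hand, from $T_s \Gamma_{sy} = \Gamma_y$ (because $s \in SA(sy)$, as $sy$ has $s$ as an ascent into $E_J$), we get $\Gamma_y = T_s \Gamma_{sy}$, hence $\overline{\Gamma_y} = \overline{T_s}\,\overline{\Gamma_{sy}}$ using the compatibility of the bar involution with the $\mathscr{H}$-action. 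We then expand $\overline{T_s} = q_s^{-1} T_s + (q_s^{-1} - 1)$ and substitute the inductive expression for $\overline{\Gamma_{sy}} = \sum_{x} \epsilon_x \epsilon_{sy} q_{sy}^{-1} R_{x,sy} \Gamma_x$. Applying $T_s$ to each $\Gamma_x$ via the four cases of the module action (and keeping careful track of which case each $x$ falls into relative to $s$), and then comparing the resulting coefficient of $\Gamma_x$ against $\epsilon_x \epsilon_y q_y^{-1} R_{x,y}$, yields recursions expressing $R_{x,y}$ in terms of $R_{z, sy}$ for various $z$ with $\ell(sy) < \ell(y)$.

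From these recursions both claims follow. For $R_{x,y} = 1$ when $x = y$: the only contribution to the coefficient of $\Gamma_y$ comes from the terms in $\overline{\Gamma_{sy}}$ involving $\Gamma_{sy}$ itself (pushed up by $T_s$ into $\Gamma_y$) and possibly $\Gamma_y$ directly if it appeared — but $sy \not\leqslant_L sy \cdot$ anything forces the relevant term, and using $R_{sy,sy} = 1$ inductively together with the sign and power bookkeeping ($q_s \cdot q_{sy}^{-1}$ against $q_y^{-1}$, $\epsilon_s \epsilon_{sy}$ against $\epsilon_y$) gives $R_{y,y} = 1$. For $R_{x,y} = 0$ when $x \nleqslant_L y$: each $z$ appearing on the right-hand side of the recursion for $R_{x,y}$ satisfies either $z \leqslant_L sy$ or $z$ differs from such an element by multiplication by $s$ in a length-increasing way, so $z \leqslant_L sy$ implies $z \leqslant_L y$ (as $sy \leqslant_L y$), and the elements $x$ for which a nonzero term survives are exactly those with $x \leqslant_L y$; for $x \nleqslant_L y$ every term vanishes by induction. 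The main obstacle I anticipate is the bookkeeping in the $WA$ case: when expanding $T_s \Gamma_x$ for $x$ with $s \in WA(x)$, one gets the extra sum $-\sum_{z < sx,\, z \in E_J} r_{z,x}^s \Gamma_z$, and one must verify that all such $z$ are still $\leqslant_L y$ so that they do not spoil the vanishing claim — this requires knowing that $z <_L sx \leqslant_L$ (something $\leqslant_L y$), which should follow from the ideal structure of $E_J$ and transitivity of $\leqslant_L$, but needs to be checked carefully together with the analogous point for the $SD$ case where $T_s \Gamma_x$ produces a $\Gamma_{sx}$ term with $\ell(sx) < \ell(x)$.
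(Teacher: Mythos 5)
Your overall strategy (induction on $\ell(y)$, writing $\Gamma_y=T_s\Gamma_{sy}$ for $s\in SD(y)$, using $\overline{T_w\Gamma_y}=\overline{T_w}\,\overline{\Gamma_y}$, and extracting a recursion for the $R$-polynomials from the four cases of the module action) is the standard one — the paper itself gives no proof of this proposition, importing it from Yin's construction, and the expected proof is exactly such an induction. But your proposal contains a genuine error at the very start: you declare that $\leqslant$ in the statement should be read as the left weak Bruhat order $\leqslant_L$. The paper fixes $\leqslant$ to be the Bruhat order in the Introduction, and with your reading the statement you set out to prove is simply false. Take $J=\emptyset$, $E_J=W$ (Remark 4.6(2)): then $R_{x,y}$ is the ordinary (weighted) $R$-polynomial, and e.g.\ in type $A_2$ with $x=s_1$, $y=s_1s_2$ one computes from $\overline{T_{s_1s_2}}=\overline{T_{s_1}}\,\overline{T_{s_2}}$ that the coefficient of $T_{s_1}$ is $q_{s_1}^{-1}(q_{s_2}^{-1}-1)\neq 0$, so $R_{s_1,s_1s_2}\neq 0$ even though $s_1$ is not a suffix of $s_1s_2$. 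Nonvanishing of $R_{x,y}$ is governed by the Bruhat interval, not the weak-order (suffix) relation, so your concluding claim that ``the elements $x$ for which a nonzero term survives are exactly those with $x\leqslant_L y$'' cannot be established — it is not true.

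Consequently the bookkeeping in your inductive step is aimed at the wrong target and leans on the wrong tool: the vanishing for $x\nleqslant y$ (Bruhat) is proved not from the ideal/suffix structure of $E_J$ and transitivity of $\leqslant_L$, but from the Bruhat-order lifting property — if $sy<y$ and $z\leqslant sy$ then both $z\leqslant y$ and $sz\leqslant y$ — applied in each of the four cases of $T_s\Gamma_x$, together with the fact that the extra terms in the $WA$ case are indexed by $z<sy$ in the Bruhat order (that is how the paper's Definition 3.2 is to be read), so they stay inside the Bruhat interval below $y$. The diagonal claim $R_{y,y}=1$ and your base case are fine. If you restate the proposition with the Bruhat order, rerun your induction, and replace the suffix/ideal argument by the lifting property (checking all four cases $SD$, $SA$, $WD$, $WA$ for the element $x$ relative to $s$), the proof goes through; as written, the vanishing half does not.
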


\begin{proposition} For any $x,y\in E_J$, we have $R_{x,y}\in \mathbb{Z}[\Gamma' ]$ and
\begin{center}
$\bm{0}\leqslant\mathsf {deg}\left(R_{x,y}\right)\leqslant L(y)-L(x)$.
\end{center}
\end{proposition}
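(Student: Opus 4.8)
The plan is to prove both the rationality statement ($R_{x,y} \in \mathbb{Z}[\Gamma']$) and the degree bounds simultaneously by induction on $\ell(y)$, using the defining relation $\overline{\Gamma_y} = \sum_{x \in E_J} \epsilon_x \epsilon_y q_y^{-1} R_{x,y} \Gamma_x$ together with the four-case multiplication table for $T_s$ on $M(E_J,L)$. The base case $y = 1$ is immediate since $\overline{\Gamma_1} = \Gamma_1$ forces $R_{1,1} = 1$, which lies in $\mathbb{Z}[\Gamma']$ and has degree $\bm{0} = L(1) - L(1)$. For the inductive step, pick $s \in SD(y)$ (possible whenever $\ell(y) > 0$) and write $y = s y'$ with $\ell(y') < \ell(y)$, so $s \in SA(y')$. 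From $T_s \Gamma_{y'} = \Gamma_y$ we get $\Gamma_y = T_s \Gamma_{y'}$, hence $\overline{\Gamma_y} = \overline{T_s}\,\overline{\Gamma_{y'}}$ by the semilinearity property of the involution; expanding the right-hand side using the inductive expression for $\overline{\Gamma_{y'}}$ and $\overline{T_s} = q_s^{-1} T_s + (q_s^{-1} - 1)$ yields a recursion expressing $R_{x,y}$ in terms of the $R_{z,y'}$.

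Concretely, I would collect the coefficient of $\Gamma_x$ on both sides. The term $\overline{T_s}\,\overline{\Gamma_{y'}} = (q_s^{-1} T_s + q_s^{-1} - 1)\sum_{z} \epsilon_z \epsilon_{y'} q_{y'}^{-1} R_{z,y'} \Gamma_z$ forces us to split the inner sum according to which of the four cases $s$ falls into for each $z$. Using $q_y = q_s q_{y'}$ and $\epsilon_y = -\epsilon_s \epsilon_{y'}$, and carefully tracking the $q_s^{-1}$, $q_s$, $-1$, and $q_s^{1/2} \mu$-type contributions (recall $r_{z,y'}^s \in q_s \mathbb{Z}[\Gamma]$ in the $WA$-case), one obtains a closed recursion of roughly the shape $R_{x,y} = R_{sx, y'} + (q_s - 1) R_{x,y'}$ when $s \in SD(x)$, and $R_{x,y} = q_s R_{sx,y'} + $ (correction terms) when $s \in SA(x)$, with analogous formulas in the weak cases. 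Because $q_s = q^{L(s)}$ and all the $r_{z,y'}^s$ coefficients also lie in $q_s \mathbb{Z}[\Gamma']$ (this itself needs to be recorded, perhaps citing \cite{Y15}), every operation stays inside $\mathbb{Z}[\Gamma']$, giving the first claim. For the degree bound: by induction $\bm{0} \leqslant \mathsf{deg}(R_{z,y'}) \leqslant L(y') - L(z)$; multiplying by $q_s$ raises the top degree by $L(s)$ and multiplying by $(q_s - 1)$ raises it by at most $L(s)$, and in each case the exponent shift exactly matches the change $L(y) - L(x) = L(s) + L(y') - L(x)$ when $\ell(x) = \ell(sx) + 1$, or is compensated when $x = sx'$; the lower bound $\bm{0} \leqslant \mathsf{deg}(R_{x,y})$ follows because the constant-term (lowest-degree) behavior is inherited and no cancellation drops it below $\bm{0}$, using $R_{x,y} = 1$ on the diagonal as the anchor.

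The main obstacle I expect is the bookkeeping in the $WA$-case (and to a lesser extent the $WD$-case): when $s \in WA(z)$ the relation $T_s \Gamma_z = q_s \Gamma_z - \sum_{w < sz,\, w \in E_J} r_{w,z}^s \Gamma_w$ injects a sum of lower terms with coefficients only known to lie in $q_s\mathbb{Z}[\Gamma]$, and one must verify these actually lie in $q_s \mathbb{Z}[\Gamma']$ with degree controlled by $L(z) - L(w)$ — essentially an auxiliary lemma about the $r^s$-polynomials that parallels the statement being proved. I would either extract this as a separate preliminary lemma (proved by the same induction, or imported from \cite{Y15}) or fold it into a simultaneous induction on $\ell(y)$ proving the joint statement for $R_{x,y}$ and all $r_{z,y}^s$. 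Once that is in place, the degree estimates are a routine matter of comparing $\mathsf{deg}(q_s) = L(s)$ against the telescoping identity $L(y) - L(x) = L(s) + (L(y') - L(x))$ and checking that the worst-case term on the right-hand side of the recursion realizes, but does not exceed, the claimed bound.
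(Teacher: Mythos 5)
The paper itself states this proposition without proof (it is carried over from Yin \cite{Y15}), so there is no in-paper argument to compare against; your induction on $\ell(y)$ via $\Gamma_y = T_s\Gamma_{sy}$ for $s\in SD(y)$, expanding $\overline{\Gamma_y}=\overline{T_s}\,\overline{\Gamma_{sy}}$ through the four cases $SD/SA/WD/WA$, is indeed the standard route and the one used in the source. Two small corrections to your sketch: since $E_J$ is an ideal for $\leqslant_L$, $y'=sy$ does lie in $E_J$ with $s\in SA(y')$, so the setup is fine; but your case formulas are misassigned. Working out the coefficients of $\Gamma_x$ one gets (up to the $r$-terms) $R_{x,y}=R_{sx,y'}$ when $s\in SD(x)$, $R_{x,y}=q_sR_{sx,y'}+(q_s-1)R_{x,y'}$ when $s\in SA(x)$, and $R_{x,y}=q_sR_{x,y'}$, resp.\ $R_{x,y}=-R_{x,y'}$, in the $WD$, resp.\ $WA$, cases — the $(q_s-1)R_{x,y'}$ term you placed in the $SD(x)$ case belongs to the $SA(x)$ case. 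With the corrected recursion the upper bound $\mathsf{deg}(R_{x,y})\leqslant L(y)-L(x)$ does propagate exactly as you indicate.

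The genuine gaps are the two you partly flag. First, the $r$-polynomial input is not optional: the paper's Definition 3.2 only gives $r^s_{z,y}\in q_s\mathbb{Z}[\Gamma]$, and from that alone neither $R_{x,y}\in\mathbb{Z}[\Gamma']$ nor the degree bound follows, since the $WA$-contribution to the recursion is $\pm\sum_z \epsilon_x\epsilon_z\, r^s_{x,z}R_{z,y'}$ and one needs $r^s_{x,z}\in q_s\mathbb{Z}[\Gamma']$ with $\mathsf{deg}(r^s_{x,z})\leqslant L(s)+L(z)-L(x)$ to stay within the claimed bound. This must either be imported from \cite{Y15} (where the definition of a $W$-graph ideal carries these constraints) or proved as a preliminary lemma; saying it ``parallels the statement'' is not yet a proof, and a simultaneous induction would have to be set up carefully because the $r^s_{z,y}$ are fixed data of the module, not quantities you are free to re-derive. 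Second, your argument for the lower bound $\bm{0}\leqslant\mathsf{deg}(R_{x,y})$ — ``no cancellation drops it below $\bm{0}$'' — is an assertion, not an argument; cancellation is exactly what has to be ruled out. A clean fix is to strengthen the induction to show that every exponent occurring in $R_{x,y}$ lies in $[\bm{0},\,L(y)-L(x)]$ and that the lowest-order coefficient is $\pm1$ (this is visible in each case of the recursion, again granted the stated control on $r^s_{x,z}$), or alternatively to deduce the lower bound from Lemma 4.4(1), which converts a bound on the top degree of $\widetilde{R}_{x,y}$ into a bound on the smallest exponent of $R_{x,y}$. With those two points supplied, your outline becomes a complete proof.
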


There are some further properties for $R_{x,y}$ as follows.

\begin{lemma}{\rm{(\cite[Lemma 4.3]{Y15}, \cite[Corollary 3.2]{Y16}, Modified)}} Let $x,y\in E_J$, then
\begin{enumerate}
  \item $\overline{R_{x,y}}=\epsilon _{x}\epsilon _{y}q_{x}q_{y}^{-1}\widetilde{R}_{x,y}$.
  \item $\sum\limits_{x\leqslant t\leqslant y, t\in E_J}^{}\epsilon _{t}\epsilon _{y}R_{x,t}
\widetilde{R}_{t,y}=\delta _{x,y}\ (\rm {Kronecker \ delta})$.
\end{enumerate}
 \end{lemma}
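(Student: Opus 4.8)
The plan is to obtain (1) from the duality isomorphism $\eta\colon M(E_J,L)\to\widetilde{M}(E_J,L)$ constructed above, and then to deduce (2) from the fact that the bar involution on $M(E_J,L)$ squares to the identity, together with (1). A preliminary remark underpins the coefficient comparisons below: by the properties of the $R$-polynomials established above, $R_{x,y}=0$ unless $x\leqslant y$ and $R_{y,y}=1$, and the same holds for $\widetilde{R}$; hence every sum occurring below is finite, the transition matrices between $\{\Gamma_x\}$ and $\{\overline{\Gamma_x}\}$, and between $\{\widetilde{\Gamma}_x\}$ and $\{\overline{\widetilde{\Gamma}_x}\}$, are unitriangular for the Bruhat order, and matching coefficients is legitimate. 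This unitriangularity is also what forces the index set $x\leqslant t\leqslant y$, $t\in E_J$, in part (2).

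For (1), I would apply $\eta$ to the defining relation $\overline{\Gamma_y}=\sum_{x\in E_J}\epsilon_x\epsilon_y q_y^{-1}R_{x,y}\Gamma_x$ of the $R$-polynomials. On the left, since $\eta$ commutes with the two bar involutions and $\eta(\Gamma_y)=\epsilon_y q_y\overline{\widetilde{\Gamma}_y}$, we get $\eta(\overline{\Gamma_y})=\overline{\eta(\Gamma_y)}=\epsilon_y q_y^{-1}\widetilde{\Gamma}_y$. On the right, using $\eta(\Gamma_x)=\epsilon_x q_x\overline{\widetilde{\Gamma}_x}$ together with the $\Phi$-semilinearity of $\eta$, one rewrites $\eta\bigl(\sum_x\epsilon_x\epsilon_y q_y^{-1}R_{x,y}\Gamma_x\bigr)$ as a $\mathbb{Z}[\Gamma]$-combination of the $\overline{\widetilde{\Gamma}_x}$; equating with the left-hand side and cancelling the common factor then yields $\widetilde{\Gamma}_y=\sum_{x\in E_J}q_x R_{x,y}\,\overline{\widetilde{\Gamma}_x}$. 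On the other hand, applying the bar involution of $\widetilde{M}(E_J,L)$ to the defining relation of $\widetilde{R}$ gives $\widetilde{\Gamma}_y=\sum_{x\in E_J}\epsilon_x\epsilon_y q_y\overline{\widetilde{R}_{x,y}}\;\overline{\widetilde{\Gamma}_x}$. Comparing coefficients in the basis $\{\overline{\widetilde{\Gamma}_x}\}$ then gives $\epsilon_x\epsilon_y q_y\overline{\widetilde{R}_{x,y}}=q_x R_{x,y}$, which — on applying the bar once more and using $\epsilon_{\bullet}^{2}=1$ — rearranges precisely to $\overline{R_{x,y}}=\epsilon_x\epsilon_y q_xq_y^{-1}\widetilde{R}_{x,y}$.

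For (2), apply the $\mathbb{Z}[\Gamma]$-semilinear bar involution twice to the defining relation of $R_{x,y}$, and substitute that relation once more for each $\overline{\Gamma_z}$ that appears:
\[
\Gamma_y=\overline{\overline{\Gamma_y}}=\sum_{z\in E_J}\epsilon_z\epsilon_y q_y\overline{R_{z,y}}\;\overline{\Gamma_z}
=\sum_{x\in E_J}\Bigl(\sum_{z\in E_J}\epsilon_x\epsilon_y q_y q_z^{-1}\overline{R_{z,y}}\,R_{x,z}\Bigr)\Gamma_x .
\]
Comparing coefficients of $\Gamma_x$ yields $\sum_{z\in E_J}\epsilon_x\epsilon_y q_y q_z^{-1}\overline{R_{z,y}}\,R_{x,z}=\delta_{x,y}$, the sum being finite and supported on $x\leqslant z\leqslant y$. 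Substituting (1) in the form $\overline{R_{z,y}}=\epsilon_z\epsilon_y q_zq_y^{-1}\widetilde{R}_{z,y}$, the powers of $q$ cancel and, using $\epsilon_{\bullet}^{2}=1$, the identity becomes $\sum_{x\leqslant z\leqslant y,\,z\in E_J}\epsilon_x\epsilon_z R_{x,z}\widetilde{R}_{z,y}=\delta_{x,y}$. Finally, since the right-hand side vanishes unless $x=y$, where $\epsilon_x\epsilon_y=1$, multiplying through by $\epsilon_x\epsilon_y$ replaces $\epsilon_x\epsilon_z$ by $\epsilon_z\epsilon_y$ without altering the equation, giving the stated formula $\sum_{x\leqslant t\leqslant y,\,t\in E_J}\epsilon_t\epsilon_y R_{x,t}\widetilde{R}_{t,y}=\delta_{x,y}$. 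Both parts are, in the end, formal consequences of the machinery already granted (the duality $\eta$ and the involutivity of the bar); the only delicate point — and where I expect the actual work to lie — is the coefficient bookkeeping, namely keeping exact track of the factors $\epsilon_{\bullet}$ and $q_{\bullet}$ and of the $\Phi$-semilinearity of $\eta$, so that both identities come out in precisely the asserted normalization.
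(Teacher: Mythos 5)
Your proposal is correct and follows essentially the same route as the paper: part (1) by applying the duality map $\eta$ (using $\eta(\Gamma_y)=\epsilon_y q_y\overline{\widetilde{\Gamma}_y}$ and that $\eta$ commutes with the bar involutions) to the defining relation of $R_{x,y}$ and comparing with the defining relation of $\widetilde{R}_{x,y}$, and part (2) by applying the bar twice to the defining relation and substituting (1), exactly as in the paper (your only difference is expanding first and inserting (1) afterwards, which is immaterial). The one cosmetic caveat is that your displayed intermediate identity $\widetilde{\Gamma}_y=\sum_x q_x R_{x,y}\overline{\widetilde{\Gamma}_x}$ treats $\eta$ as $\mathbb{Z}[\Gamma]$-linear on scalars while you simultaneously invoke ``$\Phi$-semilinearity''; the paper is equally silent on this point, and the linear reading is the one that yields the stated normalization.
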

\begin{proof}
To prove the first statement, we apply the function $\eta$ to both sides of the formula
\begin{center}
$\overline{\Gamma _{y}}=\sum\limits_{x\in E_J}^{}\epsilon _{x}\epsilon _{y}q_{y}^{-1}R_{x,y}\Gamma _{x}$,
\end{center}
and use the fact that $\eta$ commutes with the involution to compare with the formula
\begin{center}
$\overline{\widetilde{\Gamma }_{y}}=\sum\limits_{x\in E_J}^{}\epsilon _{x}\epsilon _{y}q_{y}^{-1}\widetilde{R}_{x,y}\widetilde{\Gamma }_{x}$,
\end{center}
then we can see the result.

To prove the second statement, we use (1) and the equality $\overline{\overline{\Gamma}}_y=\Gamma_y$,
\begin{center}
$\begin{aligned}\Gamma_{y}
&=\sum_{x\in E_J}^{}\epsilon _{x}\epsilon _{y}q_y\overline{R_{x,y}\Gamma_{x}}=\sum_{t\leqslant y}^{}\sum_{x\leqslant t}^{}\epsilon _{x}\epsilon _{t}R_{x,t}\widetilde{R}_{t,y}\Gamma_{x}=\sum_{x\leqslant y}^{}\left(\sum_{x\leqslant t \leqslant y}^{}\epsilon _{x}\epsilon _{t}R_{x,t}\widetilde{R}_{t,y} \right)\Gamma_{x},
\end{aligned}$
\end{center}
Therefore, we have
\begin{center}
$\sum\limits_{x\leqslant t \leqslant y}^{}\epsilon _{t}\epsilon _{y}R_{x,t}\widetilde{R}_{t,y}=\epsilon_x^{-1}\epsilon_y\sum\limits_{x\leqslant t \leqslant y}^{}\epsilon _{x}\epsilon _{t}R_{x,t}\widetilde{R}_{t,y}=\epsilon_x^{-1}\epsilon_y\delta _{x,y}=\delta _{x,y}$.
\end{center}
\end{proof}

\begin{corollary}
Let $x,y\in E_J$, then
\begin{center}
$\sum\limits_{x<t\leqslant y, t\in E_J}^{}\epsilon _{x}\epsilon _{t}\widetilde{R}_{x,t}R_{t,y}
=\delta _{x,y}-R_{x,y}$.
\end{center}
\end{corollary}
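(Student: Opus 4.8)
The plan is to deduce the identity from the inversion formula, part~(2) of the Lemma above, by applying the bar involution of $\mathbb{Z}[\Gamma]$ and then tidying up the signs $\epsilon$ and the powers of $q$.

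First I would record two easy facts. Setting $x=y$ in part~(2) of the Lemma, the sum on the left collapses to its single term $t=x$ and yields $R_{x,x}\widetilde{R}_{x,x}=1$; combined with $R_{x,x}=1$ this gives $\widetilde{R}_{x,x}=1$. Applying the bar to both sides of part~(1) of the Lemma, and using that the bar is an involution fixing $\mathbb{Z}$ with $\overline{q_{x}q_{y}^{-1}}=q_{x}^{-1}q_{y}$, one obtains the dual relation $\overline{\widetilde{R}_{x,y}}=\epsilon_{x}\epsilon_{y}q_{x}q_{y}^{-1}R_{x,y}$.

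Next I would apply the bar to the identity $\sum_{x\leqslant t\leqslant y,\,t\in E_J}\epsilon_{t}\epsilon_{y}R_{x,t}\widetilde{R}_{t,y}=\delta_{x,y}$. Since the bar is a ring automorphism fixing the integers (so $\overline{\delta_{x,y}}=\delta_{x,y}$ and $\overline{\epsilon_{t}\epsilon_{y}}=\epsilon_{t}\epsilon_{y}$), this becomes $\sum_{x\leqslant t\leqslant y}\epsilon_{t}\epsilon_{y}\,\overline{R_{x,t}}\ \overline{\widetilde{R}_{t,y}}=\delta_{x,y}$. Substituting $\overline{R_{x,t}}=\epsilon_{x}\epsilon_{t}q_{x}q_{t}^{-1}\widetilde{R}_{x,t}$ from part~(1) of the Lemma and $\overline{\widetilde{R}_{t,y}}=\epsilon_{t}\epsilon_{y}q_{t}q_{y}^{-1}R_{t,y}$ from the dual relation, each product of signs reduces to $\epsilon_{x}\epsilon_{t}$ and each product of powers of $q$ telescopes to $q_{x}q_{y}^{-1}$, which is independent of $t$. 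Factoring it out of the sum gives $q_{x}q_{y}^{-1}\sum_{x\leqslant t\leqslant y,\,t\in E_J}\epsilon_{x}\epsilon_{t}\widetilde{R}_{x,t}R_{t,y}=\delta_{x,y}$. As $q_{x}q_{y}^{-1}=q^{L(x)-L(y)}$ is a unit of $\mathbb{Z}[\Gamma]$ and $q_{x}q_{y}^{-1}\delta_{x,y}=\delta_{x,y}$ (both sides vanish unless $x=y$, when the factor is $1$), I may cancel it to conclude $\sum_{x\leqslant t\leqslant y,\,t\in E_J}\epsilon_{x}\epsilon_{t}\widetilde{R}_{x,t}R_{t,y}=\delta_{x,y}$. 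Finally I would pull off the term $t=x$, which equals $\epsilon_{x}^{2}\widetilde{R}_{x,x}R_{x,y}=R_{x,y}$ by the first fact above, so that $R_{x,y}+\sum_{x<t\leqslant y,\,t\in E_J}\epsilon_{x}\epsilon_{t}\widetilde{R}_{x,t}R_{t,y}=\delta_{x,y}$, which is the assertion.

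I do not expect a genuine obstacle here: conceptually this is just the fact that a right inverse of the locally finite, upper-triangular, unipotent array $\bigl(\epsilon_{t}\epsilon_{y}R_{t,y}\bigr)$ is automatically a left inverse, realized concretely through the bar. The only points needing care are the bookkeeping of the $\epsilon$'s and of the exponents of $q$ under the bar, and the remark that $q_{x}q_{y}^{-1}$ is invertible, so it can be cancelled rather than merely compared term by term. If one prefers to avoid part~(2) of the Lemma, one may instead repeat its proof with $M(E_J,L)$ and $\widetilde{M}(E_J,L)$ interchanged: expanding $\widetilde{\Gamma}_{y}=\overline{\overline{\widetilde{\Gamma}_{y}}}$ first via the $\widetilde{R}$-expansion of $\overline{\widetilde{\Gamma}_{y}}$ and then via the $\widetilde{R}$-expansion of each $\overline{\widetilde{\Gamma}_{x}}$, and using the dual relation to rewrite $q_{y}\overline{\widetilde{R}_{x,y}}$ as $\epsilon_{x}\epsilon_{y}q_{x}R_{x,y}$, one reads off $\sum_{x\leqslant t\leqslant y,\,t\in E_J}\epsilon_{x}\epsilon_{t}\widetilde{R}_{x,t}R_{t,y}=\delta_{x,y}$ at once.
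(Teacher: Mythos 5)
Your proof is correct and follows essentially the route the paper intends: the corollary is stated as an immediate consequence of Lemma 4.4, and your bar-conjugation of part (2), using part (1) and its barred form $\overline{\widetilde{R}_{x,y}}=\epsilon_x\epsilon_y q_xq_y^{-1}R_{x,y}$ together with $\widetilde{R}_{x,x}=1$, is exactly the straightforward way to fill in that deduction. The sign and $q$-power bookkeeping, the cancellation of the unit $q_xq_y^{-1}$, and the splitting off of the $t=x$ term are all handled correctly.
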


We can easily find the following relation between $R_{x,y}$ and other kinds of $R$-polynomials.
\begin{remark}
With the above conventions, we have
\begin{enumerate}
  \item If $E_J=D_J$ and $x,y\in D_J$, then
\begin{center}
     $\begin{aligned}
       &R_{x,y}={{R}'}_{x,y}^{J}(q)_\psi ,         &u_s=-1,\\
       &\widetilde{R}_{x,y}={{R}'}_{x,y}^{J}(q)_\psi  ,   &u_s=q_s.
      \end{aligned}$
     \end{center}
where ${{R}'}_{x,y}^{J}(q)_\psi $ is a weighted parabolic $R$-polynomial (\cite{T99}), $u_s$ is a solution of the equation $u_{s}^{2}=q_{s}+\left ( q_{s}-1 \right )u_{s}$ and $\psi $ is a weight of $W$ into $\mathbb{Z}[\Gamma ]$.
  \item If $E_J=W$ (i.e. $J=\o$) and $x,y\in W$, then
\begin{center}
   $R_{x,y}=\widetilde{R}_{x,y}={{R}'}_{x,y} $,
\end{center}
where ${{R}'}_{x,y}$ is a weighted $R$-polynomial (\cite{L03}). If further $q_s=q$ for all $s\in S$, then
\begin{center}
   $R_{x,y}(q)=\widetilde{R}_{x,y}(q)=\mathbf{R}_{x,y}(q) $,
\end{center}
where $\mathbf{R}_{x,y}(q)$ is a $R$-polynomial (\cite{KL79}).
\end{enumerate}
\end{remark}

\subsection{The weighted Kazhdan-Lusztig polynomials on $M(E_J,L)$}

\begin{proposition} There exists a unique family of polynomials $\left \{ P_{x,y}\in \mathbb{Z}[\Gamma ]\mid x,y\in E_J \right \}$ satisfying the following condition:
\begin{center}
$q_{x}^{-1}q_{y}\overline{P_{x,y}}=\sum\limits_{x\leqslant t \leqslant y, t\in E_J}^{}R_{x,t}P_{t,y}$.
\end{center}
We call them the weighted Kazhdan-Lusztig polynomials on $M(E_J,L)$.
\end{proposition}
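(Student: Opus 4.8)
The plan is to establish existence and uniqueness of the $P_{x,y}$ by the standard Kazhdan--Lusztig induction, adapted to the $W$-graph ideal setting. The defining equation
\[
q_{x}^{-1}q_{y}\overline{P_{x,y}}=\sum_{x\leqslant t\leqslant y,\,t\in E_J}R_{x,t}P_{t,y}
\]
should be supplemented by the normalization $P_{y,y}=1$ and the degree bound $\mathsf{deg}(P_{x,y})\leqslant \tfrac12\bigl(L(y)-L(x)-L(s)\bigr)$ for some $s$, or more precisely $\mathsf{deg}(P_{x,y})<\tfrac12(L(y)-L(x))$ when $x<y$ (the exact shape of the bound will be dictated by Proposition on $\mathsf{deg}(R_{x,y})$); it is this degree constraint together with the functional equation that pins the polynomials down. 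First I would treat the base case $x=y$: the equation reads $\overline{P_{y,y}}=R_{y,y}P_{y,y}=P_{y,y}$, so $P_{y,y}=1$ is forced and is the unique bar-invariant solution of degree $\bm 0$. For $x\not\leqslant y$ we simply set $P_{x,y}=0$, consistent with $R_{x,t}=0$ unless $x\leqslant t$.

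Next I would induct on $\ell(y)-\ell(x)$ (equivalently on the interval $[x,y]$ in Bruhat order, intersected with $E_J$). Assume $x<y$ and that $P_{t,y}$ has been constructed for all $t$ with $x<t\leqslant y$. Then the right-hand side
\[
C_{x,y}:=\sum_{x<t\leqslant y,\,t\in E_J}R_{x,t}P_{t,y}
\]
is a known element of $\mathbb{Z}[\Gamma]$ (indeed of $\mathbb{Z}[\Gamma']$ by Proposition on $R_{x,y}\in\mathbb{Z}[\Gamma']$), and the equation becomes $q_x^{-1}q_y\overline{P_{x,y}}-P_{x,y}=C_{x,y}$. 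The key point is the elementary lemma that for a fixed pair with $L(x)<L(y)$, the $\mathbb{Z}$-linear map $f\mapsto q_x^{-1}q_y\,\overline{f}-f$ on $\mathbb{Z}[\Gamma']$ is invertible on the subspace of polynomials $f$ with $\mathsf{deg}(f)<\tfrac12(L(y)-L(x))$, with image exactly those $g$ satisfying $\overline{g}=-q_x^{-1}q_yg$; equivalently, there is a unique such $f$ for any admissible $g$. Concretely, writing $C_{x,y}=\sum_\gamma c_\gamma q^\gamma$, one solves for the coefficients of $P_{x,y}=\sum_{\gamma<\frac12(L(y)-L(x))}p_\gamma q^\gamma$ degree by degree from the highest admissible degree downward: the antisymmetry constraint $\overline{C_{x,y}}=-q_x^{-1}q_y C_{x,y}$ (which must be checked) guarantees the recursion is consistent and terminates, yielding unique $p_\gamma\in\mathbb{Z}$. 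Uniqueness of the whole family then follows from uniqueness at each step of the induction.

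The main obstacle is verifying that the antisymmetry identity $\overline{C_{x,y}}=-q_x^{-1}q_y\,C_{x,y}$ actually holds, i.e. that the right-hand side of the defining equation lies in the correct eigenspace of the twisted bar involution so that a bar-invariant $P_{x,y}$ of controlled degree exists. This is the analogue of the classical identity $\sum_{x\leqslant z\leqslant y}R_{x,z}\overline{R_{z,y}}=\delta_{x,y}$ and should follow from combining Lemma on $\overline{R_{x,y}}=\epsilon_x\epsilon_y q_x q_y^{-1}\widetilde R_{x,y}$, the inversion formula $\sum_{x\leqslant t\leqslant y}\epsilon_t\epsilon_y R_{x,t}\widetilde R_{t,y}=\delta_{x,y}$, and the inductive hypothesis on the $P_{t,y}$ for $t>x$; one expands $\overline{C_{x,y}}$ using $\overline{R_{x,t}}=\epsilon_x\epsilon_t q_x q_t^{-1}\widetilde R_{x,t}$ and $\overline{P_{t,y}}=q_t q_y^{-1}\sum_{t\leqslant u\leqslant y}R_{t,u}^{-1}\cdots$ — more cleanly, one rewrites $q_x^{-1}q_y\overline{P_{x,y}}$ by applying the bar involution to the defining relation and using Corollary, so that the computation reduces to the orthogonality relation in Lemma(2). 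I would carry this out in a separate displayed computation, being careful with the signs $\epsilon_x\epsilon_t$ and the powers $q_x/q_t$, since that bookkeeping is exactly where an error would hide. Once this identity is in hand, the degree-by-degree solvability argument of the previous paragraph is routine, and existence and uniqueness follow together.
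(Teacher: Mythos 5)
Your proposal follows essentially the same route as the paper's proof: induction on $\ell(y)-\ell(x)$, isolating the known part $\mathbb{F}=\sum_{x<t\leqslant y,\,t\in E_J}R_{x,t}P_{t,y}$, verifying its twisted antisymmetry from $\overline{R_{x,t}}=\epsilon_x\epsilon_t q_xq_t^{-1}\widetilde R_{x,t}$, the inversion formula of Corollary 4.5 and the inductive hypothesis, and then solving $q_x^{-1}q_y\overline{P_{x,y}}-P_{x,y}=\mathbb{F}$ coefficient by coefficient. Two minor remarks: the correct eigenvalue relation is $\overline{\mathbb{F}}=-q_xq_y^{-1}\mathbb{F}$ (not $-q_x^{-1}q_y\mathbb{F}$), and your insistence that the degree bound $\mathsf{deg}(P_{x,y})<\tfrac12\left(L(y)-L(x)\right)$ must accompany the functional equation for uniqueness is well taken, since the paper leaves this constraint implicit in the statement and only records it afterwards in Corollary 4.8.
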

\begin{proof}
First, we show the existence of $P_{x,y}$ by induction on $\ell(y)-\ell(x)$. If $\ell(y)-\ell(x)=0$, then $\overline{P_{x,x}}=P_{x,x}\Leftrightarrow P_{x,x}=1$. If $\ell(y)-\ell(x)\neq0$, then it follows our assumption and Corollary 4.5 that
\begin{center}
$\begin{aligned}
\overline{\mathbb{F}}&:=\sum\limits_{x< t \leqslant y, t\in E_J}^{}\overline{R_{x,t}}\overline{P_{t,y}}\\
&=q_xq_{y}^{-1}\sum\limits_{x< t \leqslant y, t\in E_J}^{}\epsilon _{x}\epsilon _{t}\widetilde{R}_{x,t}\left (\sum\limits_{t\leqslant z \leqslant y, z\in E_J}^{}R_{t,z}P_{z,y}  \right )\\
&=q_xq_{y}^{-1}\sum\limits_{x< z \leqslant y, z\in E_J}^{}\left (\sum\limits_{x<t\leqslant z, t\in E_J}^{}\epsilon _{x}\epsilon _{t}\widetilde{R}_{x,t}R_{t,z} \right )P_{z,y} \\
&=q_xq_{y}^{-1}\sum\limits_{x< t \leqslant y, t\in E_J}^{}\delta _{x,t}P_{t,y}-q_xq_{y}^{-1}\sum\limits_{x< t \leqslant y, t\in E_J}^{}R_{x,t}P_{t,y}\\
&=-q_xq_{y}^{-1}\sum\limits_{x< t \leqslant y, t\in E_J}^{}R_{x,t}P_{t,y}\\
&=-q_xq_{y}^{-1}\mathbb{F}
\end{aligned}$
\end{center}
This implies
\begin{center}
$\overline{\mathbb{F}+P_{x,y}}=q_xq_{y}^{-1}\left ( \mathbb{F}+P_{x,y} \right )+\overline{\mathbb{F}}=q_xq_{y}^{-1}P_{x,y}$,
\end{center}

Therefore, the existence of $P_{x,y}$ is shown by replacing $\mathbb{F}$. The uniqueness of $P_{x,y}$ is obvious by the method of the proof of the existence above.
\end{proof}

Similarly, we set $P_{x,y}=0$ if $x\notin E_J$ or $y\notin E_J$.
\begin{corollary}
Assume that $x<y\in E_J$, then $P_{x,y}\in \mathbb{Z}[\Gamma' ]$ and
\begin{center}
$\bm{0}\leqslant \mathsf{deg}  \left(P_{x,y}\right)<\frac{L(y)-L(x)}{2}$.
\end{center}
\end{corollary}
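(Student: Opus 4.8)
The plan is to mimic the classical degree argument for Kazhdan--Lusztig polynomials, carried through the defining recursion
\[
q_{x}^{-1}q_{y}\overline{P_{x,y}}=\sum_{x\leqslant t\leqslant y,\ t\in E_J}R_{x,t}P_{t,y},
\]
by induction on $\ell(y)-\ell(x)$. First I would isolate the top term $t=y$ on the right-hand side, rewriting the recursion as
\[
q_{x}^{-1}q_{y}\overline{P_{x,y}}-P_{x,y}=\sum_{x\leqslant t<y,\ t\in E_J}R_{x,t}P_{t,y}.
\]
The inductive hypothesis says each $P_{t,y}$ with $x\leqslant t<y$ lies in $\mathbb{Z}[\Gamma']$ and satisfies $\bm{0}\leqslant\mathsf{deg}(P_{t,y})<\tfrac{1}{2}(L(y)-L(t))$ (with $P_{y,y}=1$), while Proposition 4.4 gives $R_{x,t}\in\mathbb{Z}[\Gamma']$ with $\bm{0}\leqslant\mathsf{deg}(R_{x,t})\leqslant L(t)-L(x)$. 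Hence every summand $R_{x,t}P_{t,y}$ lies in $\mathbb{Z}[\Gamma']$, so the left-hand side does too; since $q_x,q_y$ are monomials in $\mathbb{Z}[\Gamma']$ and the bar automorphism preserves $\mathbb{Z}[\Gamma']$, this forces $P_{x,y}\in\mathbb{Z}[\Gamma']$, establishing the membership claim.

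For the degree bound, set $d:=L(y)-L(x)$ and abbreviate $P:=P_{x,y}$. The constant term: evaluating the recursion in low degree (or using $P_{x,x}=1$ and induction, together with $R_{x,t}$ having nonzero constant term only in controlled situations) shows the constant coefficient of $P$ equals $1$, so $\mathsf{deg}(P)\geqslant\bm{0}$; more precisely I would argue that $[q^{\bm 0}]P=1$ exactly as in the classical case, which also pins down $\mathbf{F}$'s constant term. For the upper bound, note $\mathsf{deg}(R_{x,t}P_{t,y})\leqslant (L(t)-L(x))+\tfrac{1}{2}(L(y)-L(t))=\tfrac{1}{2}(L(y)+L(t))-L(x)\leqslant\tfrac{1}{2}(L(y)+L(y))-L(x)$? — that is too weak, so instead I would bound using $L(t)\leqslant L(y)$ only when $t$ is close to $y$ and sharpen: actually the right estimate is $\mathsf{deg}(R_{x,t}P_{t,y})\leqslant (L(t)-L(x))+\bigl(\tfrac{d'}{2}-\text{something}\bigr)$ where $d'=L(y)-L(t)$; combining, $\mathsf{deg}\leqslant\tfrac12(L(y)-L(x))+\tfrac12(L(t)-L(x))$, and since $L(t)<L(y)$ strictly for $t<y$ this is $<\tfrac12 d+\tfrac12 d=d$, giving $\mathsf{deg}\bigl(q_x^{-1}q_y\overline P\bigr)<d$, i.e. $-L(x)+L(y)-\mathsf{deg}(P)<d$, hence $\mathsf{deg}(P)>\bm 0$ — wrong direction again. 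The correct move is the standard symmetry trick: from $q_x^{-1}q_y\overline{P}-P=\mathbf{F}$ with $\mathsf{deg}(\mathbf F)$ bounded by $<d$ on one side, and noting $\overline{\mathbf F}$ satisfies a dual relation, one gets that $P$ and $q_x^{-1}q_y\overline P$ cannot both have degree $\geqslant\tfrac d2$; writing $P=\sum_{\gamma}p_\gamma q^\gamma$, the top degree $\delta:=\mathsf{deg}(P)$ contributes $p_\delta q^{-L(x)+L(y)-\delta}$ to $q_x^{-1}q_y\overline P$, and if $\delta\geqslant\tfrac d2$ then $-L(x)+L(y)-\delta=d-\delta\leqslant\tfrac d2\leqslant\delta$, so this term is not cancelled by $P$ and must be cancelled inside $\mathbf F$, contradicting $\mathsf{deg}(\mathbf F)<d-\delta$ once the $\mathbf F$-degree estimate is made precise.

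Accordingly, the bulk of the work — and the main obstacle — is the sharp degree estimate on $\mathbf F=\sum_{x\leqslant t<y}R_{x,t}P_{t,y}$. One must show $\mathsf{deg}(\mathbf F)\leqslant L(y)-L(x)-1_{\Gamma}$ in a suitable sense, or more to the point that in the difference $q_x^{-1}q_y\overline P-P$ the terms of degree $\geqslant\lceil d/2\rceil$ all cancel, which is precisely the content of the self-consistency of the recursion. I would handle this by the bar-invariance argument already used in the proof of Proposition~4.8: since $\overline{\mathbf F}=-q_xq_y^{-1}\mathbf F$ (as derived there via Corollary~4.5), the element $\mathbf F$ is ``anti-self-dual,'' so it is determined by its part of degree $<\tfrac d2$ together with its part of degree $\geqslant\tfrac d2$ being the bar-image of the former; one then checks that $\mathsf{deg}(R_{x,t}P_{t,y})<d$ for every single $t<y$ using $L(t)<L(y)$ strictly, which kills the top range and yields $\mathsf{deg}(P)<\tfrac d2$. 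The constant-term normalization $[q^{\bm 0}]P_{x,y}=1$ then gives $\mathsf{deg}(P)\geqslant\bm 0$, completing the proof. The delicate point throughout is that $\Gamma$ is an arbitrary totally ordered abelian group, so ``$<\tfrac d2$'' must be read as: $2\mathsf{deg}(P)<d$ in $\Gamma$, and all inequalities must be verified $\Gamma$-linearly rather than over $\mathbb{Z}$; but since every $L(s)\geqslant\bm 0$ and the estimates only ever add and halve elements of $\Gamma'$, no genuine difficulty arises beyond bookkeeping.
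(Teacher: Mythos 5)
Your final route is the same symmetry trick the paper uses, but as written it has a genuine gap: the defining recursion of Proposition 4.7, with the lower polynomials $P_{t,y}$ ($t>x$) held fixed, does \emph{not} determine $P_{x,y}$. If $P_{x,y}$ solves $q_x^{-1}q_y\overline{P_{x,y}}-P_{x,y}=\sum_{x<t\leqslant y}R_{x,t}P_{t,y}$, then so does $P_{x,y}+c\bigl(q^{\gamma}+q^{L(y)-L(x)-\gamma}\bigr)$ for any $\gamma$, since $q_x^{-1}q_y\overline{E}=E$ for such an element $E$; this modified solution violates the degree bound. Consequently no manipulation of the recursion plus induction can yield $\mathsf{deg}(P_{x,y})<\frac{L(y)-L(x)}{2}$: you must invoke how $P_{x,y}$ is actually produced in the existence proof of Proposition 4.7, namely as (up to the normalizing factor) the truncation of the anti-self-dual element $\mathbb{F}$ to degrees below $\frac{L(y)-L(x)}{2}$ — this is exactly what the paper's terse appeal to ``Proposition 4.3, Proposition 4.7 and the uniqueness of $P_{x,y}$'' encodes. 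The same objection applies to your membership argument: from $q_x^{-1}q_y\overline{P}-P\in\mathbb{Z}[\Gamma']$ one cannot conclude $P\in\mathbb{Z}[\Gamma']$, because exponents outside the lattice generated by the $L(s)$ can cancel between $q_x^{-1}q_y\overline{P}$ and $P$ (e.g.\ a term $q^{\gamma}$ with $2\gamma=L(y)-L(x)$, $\gamma\notin\Gamma''$, contributes nothing to the difference); membership too comes from the construction out of the $R$-polynomials.

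Two further concrete problems. First, your displayed equation is mis-indexed: splitting off the $t=x$ term (not $t=y$) gives $q_x^{-1}q_y\overline{P_{x,y}}-P_{x,y}=\sum_{x<t\leqslant y}R_{x,t}P_{t,y}$, whereas your sum $\sum_{x\leqslant t<y}$ both contains $P_{x,y}$ itself (circular for a degree estimate) and omits the $t=y$ term $R_{x,y}$, whose degree can be as large as $L(y)-L(x)$; so the later claim that every summand has degree strictly less than $d$ silently discards precisely the top-degree contribution, and the ``kills the top range'' step does not go through as stated. Second, the normalization $[q^{\bm 0}]P_{x,y}=1$ is asserted ``as in the classical case'' but never proved; in this weighted $W$-graph-ideal (parabolic-type) generality it is not available and is not what the paper uses — the lower bound $\mathsf{deg}(P_{x,y})\geqslant\bm 0$ again comes from the construction, since $\mathbb{F}$ is built from $R$-polynomials and lower $P$'s all of whose exponents are $\geqslant\bm 0$. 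So the proposal needs to be reorganized around the explicit construction (or around a definition of $P_{x,y}$ that includes the degree condition), rather than around the recursion alone.
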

\begin{proof}
It follows Proposition 4.3 and Proposition 4.7 that $P_{x,y}\in \mathbb{Z}[\Gamma' ]$ and the inequality on the left is true. Let $d:= \mathsf{deg} \left(P_{x,y}\right)$ and note that
\begin{center}
$q_{x}^{-1/2}q_{y}^{1/2}\overline{P_{x,y}}-q_{x}^{1/2}q_{y}^{-1/2}P_{x,y}=q_{x}^{1/2}q_{y}^{-1/2}\sum\limits_{x<t \leqslant y, t\in E_J}^{}R_{x,t}P_{t,y}$,
\end{center}
we have
\begin{center}
$\frac{L(y)-L(x)}{2}-d\leqslant \text{deg}  \left(q_{x}^{-1/2}q_{y}^{1/2}\overline{P_{x,y}}\right)\leqslant \frac{L(y)-L(x)}{2}$,

$\frac{L(x)-L(y)}{2}\leqslant \text{deg}  \left(q_{x}^{1/2}q_{y}^{-1/2}P_{x,y}\right)\leqslant \frac{L(x)-L(y)}{2}+d$.
\end{center}
Then, by the uniqueness of $P_{x,y}$,
\begin{center}
$\frac{L(x)-L(y)}{2}+d<\textbf{0}<\frac{L(y)-L(x)}{2}-d$.
\end{center}
Therefore, the inequality on the right is also true.
\end{proof}

\begin{remark}
It is worth mentioning that the $\mathscr{H}$-module $M(E_J,L)$ has a unique basis $\left \{ C_{y}\mid y\in E_J \right \}$ such that $\overline {C_{y}}=C_{y}$ for any $y\in E_J $, and
\begin{center}
  $C_{y}=\sum\limits_{x\leqslant y\in E_J}^{}\epsilon _{x}\epsilon _{y}q_{x}^{-1}q_{y}^{1/2}\overline {P_{x,y}}\Gamma _{x}$.
\end{center}
This is the so-called Kazhdan-Lusztig basis.
 \end{remark}

The relation between $P_{x,y}$ and other kinds of Kazhdan-Lusztig polynomials is very similar to Remark 4.6, we do not show details.

\section{The combinatorial formulas for $P_{x,y}$}

In this section, we define $\mathscr{R}$-polynomials on $M(E_J,L)$ and show combinatorial formulas for $\left \{ P_{x,y} \mid x,y\in E_J \right \}$, which extend the results of Tagawa \cite{T99} and Deodhar \cite{D97}.

For any $\gamma \in \Gamma $, we define the following truncation functions.
\begin{center}
  $U_{\zeta }\left ( \sum\limits_{\gamma \geqslant \bm{0}}^{} \left [ q^{\gamma } \right ]q^{\gamma }\right )=\sum\limits_{\gamma >\zeta }^{} \left [ q^{\gamma } \right ]q^{\gamma }$ and $L_{\zeta }\left ( \sum\limits_{\gamma \geqslant \bm{0}}^{} \left [ q^{\gamma } \right ]q^{\gamma }\right )=\sum\limits_{\gamma <\zeta }^{} \left [ q^{\gamma } \right ]q^{\gamma }$.
\end{center}

\begin{definition}
Assume that $E_{J}$ is a $W$-graph ideal.
\begin{enumerate}
  \item Let $\mathscr{J}_{k}(x,y)=\left \{ \varphi :x=x_0< x_{1}<\cdots <x_{k+1}=y\mid x_i\in E_{J}\right \}$ be the set of all $E_J$-chains of length equal to $k+1$, where $x_0$ is called the initial element of $\varphi $ and $x_{k+1}$ is called the final element of $\varphi $. Let $x<y$, we denote by $\mathscr{J}(x,y)=\bigcup_{k\geqslant 0}^{}\mathscr{J}_{k}(x,y)$ the set of all $E_J$-chains.
  \item Let $\mathscr{M}_{k}(x,y)=\left \{ \varphi :x=x_0\leqslant  x_{1}\leqslant \cdots \leqslant x_{k+1}=y\mid x_i\in E_{J} \right \}$ be the set of all $E_J$-multichains of length equal to $k+1$, and we denote by $\mathscr{M}(x,y)=\bigcup_{k\geqslant 0}^{}\mathscr{M}_{k}(x,y)$ the set of all $E_J$-multichains.
\end{enumerate}
\end{definition}

Note that an $E_J$-chain is an $E_J$-multichain as well. Conversely, for each $E_J$-multichain, there exists a unique $E_J$-chain which is obtained by removing the repetitions.

We define a family of polynomials which is depended only on the weighted $R$-polynomials $R_{x,y}$. We call them $\mathscr{R}$-polynomials on $M(E_J,L)$.

\begin{definition}
Assume that $x\leqslant y\in E_{J}$ and $\varphi :x=x_0\leqslant  x_{1}\leqslant \cdots \leqslant x_{r+1}=y$ is a $E_J$-multichain, let
\begin{center}
  $\mathscr{R}_\varphi=\left\{\begin{aligned}
   &q_{x}^{-1}q_{y}\overline {R_{x,y}}                                      &\text{if}\ r=0 ,\\
   &\mathscr{R}_{x,x_1}U_{\frac{L(y)-L(x_1)}{2}} \left( q_{x_1}^{-1}q_y
    \overline {\mathscr{R}_{\varphi^{'}}}\right)                   &\text{if}\ r \geqslant1.
   \end{aligned}\right.$
\end{center}
where $\varphi':x_{1}\leqslant  x_{2}\leqslant \cdots \leqslant y \in \mathscr{M}(x_1,y)$.
\end{definition}

Then,  we show some properties of $\mathscr{R}$-polynomials.
\begin{proposition}
Assume that $x\leqslant y\in E_{J}$, $\varphi\in \mathscr{M}(x,y)$, $\varphi' \in \mathscr{M}(x_1,y)$ and $\mathscr{R}_\varphi\neq 0$.
\begin{enumerate}
  \item Let $d(\varphi )$ be the maximum power of $q$ that divides $\mathscr{R}_\varphi$, then
  \begin{center}
   $\frac{L(y)-L(x_1)}{2} <  \mathsf{deg}\left(\mathscr{R}_\varphi\right)\leqslant L(y)-L(x)-d(\varphi' )$.
  \end{center}
  \item For all $i\in \left \{ 2, 3, \cdots ,\ell(\varphi) \right \}$, we have $x_{i-1} < x_i$.

\end{enumerate}
\end{proposition}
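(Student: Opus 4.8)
The plan is to prove both statements simultaneously by induction on $r = \ell(\varphi)$, the number of strict steps (equivalently, on the "length" $r+1$ of the multichain up to repetition). The base case $r = 1$, i.e. $\varphi : x = x_0 \leqslant x_1 = y$ with a single nontrivial step, is essentially Proposition 4.3 combined with the defining recursion: $\mathscr{R}_\varphi = q_x^{-1} q_y \overline{R_{x,y}}$, and since $\bm{0} \leqslant \mathsf{deg}(R_{x,y}) \leqslant L(y) - L(x)$ we get $\bm{0} \leqslant \mathsf{deg}(\mathscr{R}_\varphi) \leqslant L(y) - L(x)$ together with $d(\varphi')$ being the maximum power of $q$ dividing the empty-tail polynomial (to be interpreted via the degree of $R_{x,y}$ after applying the bar); one must check the convention makes the bound $\leqslant L(y) - L(x) - d(\varphi')$ hold, and that $x_1 > x_0$ forces $R_{x,y}$ to be a nonconstant polynomial so the strict lower bound $\tfrac{L(y)-L(x_1)}{2} = \bm{0} < \mathsf{deg}(\mathscr{R}_\varphi)$ holds. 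For statement (2) in the base case there is nothing to prove.

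For the inductive step, write $\varphi': x_1 \leqslant x_2 \leqslant \cdots \leqslant x_{r+1} = y$ and apply the induction hypothesis to $\varphi'$, which has one fewer strict step: this gives $\tfrac{L(y)-L(x_2)}{2} < \mathsf{deg}(\mathscr{R}_{\varphi'}) \leqslant L(y) - L(x_1) - d(\varphi'')$, and that $x_{i-1} < x_i$ for $i \in \{3,\dots,\ell(\varphi')\}$. Now analyze $\mathscr{R}_\varphi = \mathscr{R}_{x,x_1} \cdot U_{\frac{L(y)-L(x_1)}{2}}\!\left(q_{x_1}^{-1} q_y \overline{\mathscr{R}_{\varphi'}}\right)$. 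The truncation operator $U_\zeta$ keeps exactly the terms of degree $> \zeta$. Applying the degree-reversing bar to $\mathscr{R}_{\varphi'}$ turns its top degree into its bottom degree: $q_{x_1}^{-1} q_y \overline{\mathscr{R}_{\varphi'}}$ has a term of degree $L(y) - L(x_1) - (\text{lowest exponent of }\mathscr{R}_{\varphi'}) = L(y) - L(x_1) - d(\varphi')$ and a lowest term of exponent $L(y) - L(x_1) - \mathsf{deg}(\mathscr{R}_{\varphi'})$. By the IH, $L(y) - L(x_1) - \mathsf{deg}(\mathscr{R}_{\varphi'}) < L(y) - L(x_1) - \tfrac{L(y)-L(x_2)}{2}$, and I will need to compare this with the truncation threshold $\tfrac{L(y)-L(x_1)}{2}$ to see that $U_{\frac{L(y)-L(x_1)}{2}}$ does not annihilate the whole thing — this is where $x_1 < x_2$ (which itself needs to be established) enters, ensuring enough "room." The top degree of $\mathscr{R}_\varphi$ is then $\mathsf{deg}(\mathscr{R}_{x,x_1}) + \big(L(y) - L(x_1) - d(\varphi')\big)$; combined with the base-case bound $\mathsf{deg}(\mathscr{R}_{x,x_1}) \leqslant L(x_1) - L(x)$ this gives the required upper bound $L(y) - L(x) - d(\varphi')$, and the strict lower bound follows because $U_{\frac{L(y)-L(x_1)}{2}}$ by definition only retains degrees $> \tfrac{L(y)-L(x_1)}{2}$, so $\mathscr{R}_\varphi$, being a nonzero multiple of such a polynomial by $\mathscr{R}_{x,x_1}$ (which has nonnegative degree terms, and nonzero constant term only if $x = x_1$), has all exponents $> \tfrac{L(y)-L(x_1)}{2}$.

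For statement (2) in the inductive step, I must still establish $x_1 < x_2$ (the $i=2$ case), since the IH on $\varphi'$ only handed us $i \geqslant 3$. Suppose for contradiction $x_1 = x_2$. Then $\varphi'$ begins with a repetition, and by the recursion $\mathscr{R}_{\varphi'} = \mathscr{R}_{x_1,x_1} \cdot U_{\frac{L(y)-L(x_1)}{2}}(q_{x_1}^{-1} q_y \overline{\mathscr{R}_{\varphi''}})$ where $\mathscr{R}_{x_1,x_1} = q_{x_1}^{-1} q_{x_1}\overline{R_{x_1,x_1}} = 1$. Thus $\mathscr{R}_{\varphi'} = U_{\frac{L(y)-L(x_1)}{2}}(q_{x_1}^{-1} q_y \overline{\mathscr{R}_{\varphi''}})$, and I claim this forces $\mathscr{R}_\varphi = 0$: indeed, applying $q_{x_1}^{-1} q_y \,\overline{(-)}$ to a polynomial all of whose exponents exceed $\tfrac{L(y)-L(x_1)}{2}$ yields a polynomial all of whose exponents are strictly less than $L(y) - L(x_1) - \tfrac{L(y)-L(x_1)}{2} = \tfrac{L(y)-L(x_1)}{2}$, so the outer $U_{\frac{L(y)-L(x_1)}{2}}$ kills everything, contradicting $\mathscr{R}_\varphi \neq 0$. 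Hence $x_1 < x_2$. The main obstacle I anticipate is bookkeeping the interaction between the two truncations at thresholds $\tfrac{L(y)-L(x_1)}{2}$ and $\tfrac{L(y)-L(x_2)}{2}$ through the degree-reversal, and in particular pinning down the convention for $d(\varphi')$ in the $r=1$ base so that the upper bound $\leqslant L(y) - L(x) - d(\varphi')$ is self-consistent across the recursion; once the two-sided degree estimates on the tail are correctly propagated, (2) is the relatively clean consequence of the $\mathscr{R}_{t,t}=1$ collapse shown above.
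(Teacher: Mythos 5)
Your proposal is correct in substance and follows essentially the same route as the paper: for (1) the same degree bookkeeping (every exponent of $U_{\frac{L(y)-L(x_1)}{2}}\left(q_{x_1}^{-1}q_y\overline{\mathscr{R}_{\varphi'}}\right)$ exceeds the threshold while those of $\mathscr{R}_{x,x_1}$ are $\geqslant\bm{0}$, giving the left inequality; $\mathsf{deg}(\mathscr{R}_{x,x_1})\leqslant L(x_1)-L(x)$ together with $\mathsf{deg}\left(q_{x_1}^{-1}q_y\overline{\mathscr{R}_{\varphi'}}\right)=L(y)-L(x_1)-d(\varphi')$ giving the right one), and for (2) the same mechanism: the paper deduces $L(x_1)<L(x_2)$ by comparing the truncation threshold $\frac{L(y)-L(x_1)}{2}$ with the bounds of (1) for $\varphi'$, while you argue the contrapositive (if $x_1=x_2$, all exponents of $q_{x_1}^{-1}q_y\overline{\mathscr{R}_{\varphi'}}$ fall below the threshold and the outer truncation kills everything, so $\mathscr{R}_\varphi=0$). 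Your remark that nonvanishing of the truncation is something to be "seen" is unnecessary --- $\mathscr{R}_\varphi\neq 0$ is a hypothesis --- but this causes no circularity.

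Two small repairs are needed. First, your inductive step for (2) invokes the recursion $\mathscr{R}_{\varphi'}=\mathscr{R}_{x_1,x_1}\,U_{\frac{L(y)-L(x_1)}{2}}\left(q_{x_1}^{-1}q_y\overline{\mathscr{R}_{\varphi''}}\right)$, which presupposes that $\varphi'$ has at least two steps; when $\ell(\varphi)=2$ there is no $\varphi''$, and this is precisely the case the paper isolates as its induction base. The fix is a one-line check: there $x_1=x_2=y$, so $\mathscr{R}_{\varphi'}=\mathscr{R}_{y,y}=1$ and $U_{\bm{0}}(1)=0$, whence $\mathscr{R}_\varphi=0$; but your collapse argument, whose premise is that all exponents of $\mathscr{R}_{\varphi'}$ strictly exceed the threshold, does not literally cover it. Second, in your base case for (1) you claim $x_0<x_1$ forces $R_{x,y}$ to be nonconstant so that $\mathsf{deg}(\mathscr{R}_{x,y})>\bm{0}$; nothing in the paper gives a positive lower bound on $\mathsf{deg}(R_{x,y})$ (and with zero weights $L(y)-L(x)$ can equal $\bm{0}$). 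This claim is also unnecessary: for chains with $r\geqslant 1$ the strict lower bound comes from the truncation alone, and for a one-step chain the statement involves the undefined quantity $d(\varphi')$, so, as in the paper's proof, (1) should simply be read as concerning $r\geqslant 1$. (Your parenthetical that $\mathscr{R}_{x,x_1}$ has nonzero constant term only if $x=x_1$ is false in general, but it is never used.)
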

\begin{proof}
(1) The inequality on the left follows the definition of $\mathscr{R}_\varphi$. Since
\begin{center}
$\begin{aligned}deg(\mathscr{R}_\varphi)
      &=\mathsf{deg}( \mathscr{R}_{x,x_1})+\mathsf{deg}\left( U_{\frac{L(y)-L(x_1)}{2}} \left( q_{x_1}^{-1}q_y\overline {\mathscr{R}_{\varphi^{'}}}\right)\right)\\
      &\leqslant L(x_1)-L(x)+\mathsf{deg}\left( q_{x_1}^{-1}q_y\overline {\mathscr{R}_{\varphi^{'}}}\right)=L(y)-L(x)-d(\varphi' ),
\end{aligned}$
\end{center}
the inequality on the right is also true.

(2) We show this by induction on $\ell(\varphi)$. If $\ell(\varphi)=2$, then $\varphi =\left ( x_0,x_1,x_{2} \right )$ and $x_1<x_2$ (there is a contradiction between $\mathscr{R}_{x_0,x_1,x_2}=0$ and our assumption if $x_1=x_2$).

Suppose that the inequality holds when $\ell(\varphi)\leqslant k, (k\geqslant 2)$, then we show it for $\ell(\varphi)=k+1$. Since $\mathscr{R}_{x_0,x_1,\cdots , x_{k+1}}\neq0$,
\begin{center}
$\mathsf{deg}\left(q_{x_1}^{-1}q_{x_{k+1}}\mathscr{R}_{x_2,\cdots, x_{k+1}}\right)>\frac{L(y)-L(x_1)}{2}$.
\end{center}
Then, by statement (1), we have $L(x_1)<L(x_2)$. This is equivalent to $x_1<x_2$. Therefore,
\begin{center}
$x_1<x_2<x_3<\cdots<x_{k+1}$.
\end{center}
follows our inductive hypothesis.
\end{proof}

The following corollaries are obvious.

\begin{corollary}
Let $x\leqslant y\in E_{J}$ and $\varphi \in \mathscr{M}(x,y)$. If $\ell(y)-\ell(x)<\ell(\varphi)-1$, then $\mathscr{R}_\varphi= 0$.
\end{corollary}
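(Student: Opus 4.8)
The plan is to prove the contrapositive-flavored statement by strong induction on the length $\ell(\varphi)$ of the multichain $\varphi$, using Proposition 5.4(2) as the crucial input. The base case is $\ell(\varphi) = 1$, i.e. $r = 0$ and $\varphi = (x, y)$: here the hypothesis $\ell(y) - \ell(x) < \ell(\varphi) - 1 = 0$ cannot hold (as $x \leqslant y$ forces $\ell(y) - \ell(x) \geqslant 0$), so there is nothing to prove. For $\ell(\varphi) = 2$, the hypothesis reads $\ell(y) - \ell(x) < 1$, i.e. $x = y$; then $\varphi = (x_0, x_1, x_2)$ with $x_0 = x_2$, which forces $x_0 = x_1 = x_2$, and the definition $\mathscr{R}_\varphi = \mathscr{R}_{x,x_1} U_{\frac{L(y)-L(x_1)}{2}}(q_{x_1}^{-1} q_y \overline{\mathscr{R}_{\varphi'}})$ with $\varphi' = (x_1, x_2)$ gives $\mathscr{R}_{\varphi'} = q_{x_1}^{-1} q_y \overline{R_{x_1,y}} = 1$ and hence the truncation $U_{\bm{0}}(1) = 0$. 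So $\mathscr{R}_\varphi = 0$.

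For the inductive step, suppose the claim holds for all multichains of length at most $k$ with $k \geqslant 2$, and let $\varphi : x = x_0 \leqslant x_1 \leqslant \cdots \leqslant x_{k+1} = y$ with $\ell(y) - \ell(x) < k$. Assume toward a contradiction that $\mathscr{R}_\varphi \neq 0$. Then by Proposition 5.4(2), the strict chain $x_1 < x_2 < \cdots < x_{k+1} = y$ holds, so in particular $\ell(y) - \ell(x_1) \geqslant k - 1$, giving $\ell(x_1) \leqslant \ell(y) - (k-1)$. On the other hand $x = x_0 \leqslant x_1$ gives $\ell(x_1) \geqslant \ell(x)$, so $\ell(y) - \ell(x) \geqslant \ell(x_1) - \ell(x) + (k-1) \geqslant k - 1$. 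Combined with the hypothesis $\ell(y) - \ell(x) < k$ this leaves only the possibility $\ell(y) - \ell(x) = k - 1$, which in turn forces $\ell(x_1) = \ell(x)$ and hence $x_0 = x_1$ (since $x_0 \leqslant x_1$ with equal length). But then $\varphi' : x_1 \leqslant x_2 \leqslant \cdots \leqslant y$ is a multichain of length $k$ with $\ell(y) - \ell(x_1) = \ell(y) - \ell(x) = k - 1 < k = \ell(\varphi') $, wait — here we need to recheck the arithmetic: $\ell(\varphi') - 1 = k - 1$, so $\ell(y) - \ell(x_1) = k-1$ is not strictly less than $\ell(\varphi') - 1$, so the inductive hypothesis does not directly apply to $\varphi'$.

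So the cleaner route is: since we have forced $\ell(y) - \ell(x_1) = k-1$ and the strict chain $x_1 < x_2 < \cdots < x_{k+1} = y$ has exactly $k$ strict steps, each step must increase length by exactly one, i.e. $\ell(x_{i}) - \ell(x_{i-1}) = 1$ for $i = 2, \dots, k+1$; this is a maximal-length chain. The key remaining obstacle — and the step I expect to require the most care — is to extract a contradiction from $\mathscr{R}_\varphi \neq 0$ at the \emph{bottom} step: we still have $x_0 = x_1$, so $\mathscr{R}_{x,x_1} = \mathscr{R}_{x,x} = 1$, and then $\mathscr{R}_\varphi = U_{\frac{L(y)-L(x_1)}{2}}(q_{x_1}^{-1} q_y \overline{\mathscr{R}_{\varphi'}})$. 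Now $\mathsf{deg}(q_{x_1}^{-1} q_y \overline{\mathscr{R}_{\varphi'}}) = L(y) - L(x_1) - d(\varphi')$ where $d(\varphi') \geqslant \bm{0}$, so this degree is at most $L(y) - L(x_1)$; but more is true — by applying Proposition 5.4(1) to $\varphi'$ (whose second vertex is $x_2$) we should get $\mathsf{deg}(\mathscr{R}_{\varphi'}) \leqslant L(y) - L(x_1) - d(\varphi'')$ and combining with the lower bound $\mathsf{deg}(\mathscr{R}_{\varphi'}) > \frac{L(y) - L(x_2)}{2}$, one deduces that $q_{x_1}^{-1} q_y \overline{\mathscr{R}_{\varphi'}}$ has degree at most $\frac{L(y)-L(x_1)}{2}$ (using $L(x_2) - L(x_1) > \bm{0}$ and the degree bookkeeping), so the truncation $U_{\frac{L(y)-L(x_1)}{2}}$ kills it entirely, yielding $\mathscr{R}_\varphi = 0$, the desired contradiction. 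I would present the induction so that this degree computation at the bottom step is the single substantive lemma-calculation, invoking Proposition 5.4(1)–(2) and Proposition 4.3 for the degree bounds on $R$-polynomials as needed.
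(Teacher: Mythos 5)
Your overall strategy, namely invoking the strictness statement of Proposition 5.3(2) (which you cite as 5.4(2)) to turn the tail of the multichain into a strict chain and then comparing lengths, is exactly the reasoning the paper has in mind when it labels this corollary ``obvious''. However, your execution contains an arithmetic slip that derails it: the strict chain $x_1<x_2<\cdots<x_{k+1}=y$ has $k$ strict steps, so it gives $\ell(y)-\ell(x_1)\geqslant k$, not $\geqslant k-1$ as you wrote. Combined with $\ell(x_0)\leqslant\ell(x_1)$ (from $x_0\leqslant x_1$), this yields $\ell(y)-\ell(x)\geqslant k=\ell(\varphi)-1$, which directly contradicts the hypothesis $\ell(y)-\ell(x)<\ell(\varphi)-1$; the proof ends there, with no induction and no case analysis. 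You in fact notice a few lines later that the chain has ``exactly $k$ strict steps'', which is already incompatible with your surviving case $\ell(y)-\ell(x_1)=k-1$, so your own text is internally inconsistent at that point.

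The genuine gap is what the undercount forces you into: the ``degenerate case'' $\ell(y)-\ell(x)=k-1$, $x_0=x_1$ is vacuous, and the degree computation you sketch to dispose of it is not justified. You need $\mathsf{deg}\bigl(q_{x_1}^{-1}q_y\overline{\mathscr{R}_{\varphi'}}\bigr)\leqslant\frac{L(y)-L(x_1)}{2}$ for the truncation $U_{\frac{L(y)-L(x_1)}{2}}$ to annihilate everything, i.e.\ $d(\varphi')\geqslant\frac{L(y)-L(x_1)}{2}$. But the bounds you invoke only give: the valuation of $\mathscr{R}_{x_1,x_2}$ is $\geqslant\bm{0}$ (Proposition 4.3) and the valuation of the truncated factor is $>\frac{L(y)-L(x_2)}{2}$, hence $d(\varphi')>\frac{L(y)-L(x_2)}{2}$ and $\mathsf{deg}\bigl(q_{x_1}^{-1}q_y\overline{\mathscr{R}_{\varphi'}}\bigr)<\frac{L(y)-L(x_1)}{2}+\frac{L(x_2)-L(x_1)}{2}$, which exceeds the truncation threshold whenever $L(x_2)>L(x_1)$. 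So the final step, which you yourself flag as the delicate one, does not close as written. The fix is simply the corrected count above, after which the inductive scaffolding (which you never actually use, since the inductive hypothesis is not applied to $\varphi'$) can be discarded; your base cases $\ell(\varphi)=1,2$ are fine.
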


\begin{corollary}
For any $x\leqslant z \in E_{J}$ and $k\in \mathbb{N}$, let $z^{(k)}:=(z,z,...,z)\in ({E_J})^k$. Then, $\mathscr{R}_{z^{(k)}}=0$ if $k\geqslant 3$, $\mathscr{R}_{x,z^{(k)}}=0$ if $k\geqslant 2$.
\end{corollary}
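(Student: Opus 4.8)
The plan is to verify both claims directly from Definition 5.3 together with the degree estimates in Proposition 5.4. For the first claim, fix $z\le z$ (the only relevant case for a chain of repeated elements being $z=z$) and consider $z^{(k)}:(z,z,\dots,z)$ with $k$ copies. I would argue by induction on $k$. The base case $k=2$ is immediate: $\mathscr{R}_{z,z}=q_z^{-1}q_z\overline{R_{z,z}}=\overline{R_{z,z}}=1$ by Proposition 4.2, so this is nonzero but corresponds to $r=1$ and final element $y=z$. The key step is $k=3$: here $\varphi:(z,z,z)$ has $r=2$, $x_1=z$, and $\varphi':(z,z)\in\mathscr{M}(z,z)$ with $\mathscr{R}_{\varphi'}=1$. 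Thus by definition $\mathscr{R}_{z^{(3)}}=\mathscr{R}_{z,z}\,U_{\frac{L(z)-L(z)}{2}}\bigl(q_z^{-1}q_z\,\overline{\mathscr{R}_{\varphi'}}\bigr)=1\cdot U_{\bm 0}(1)$. Since $U_{\bm 0}$ truncates away all terms of degree $\le\bm 0$ and the constant $1$ has degree $\bm 0$, we get $U_{\bm 0}(1)=0$, hence $\mathscr{R}_{z^{(3)}}=0$. For $k>3$, the recursion gives $\mathscr{R}_{z^{(k)}}=\mathscr{R}_{z,z}\,U_{\cdots}(q_z^{-1}q_z\,\overline{\mathscr{R}_{z^{(k-1)}}})$, and since $\mathscr{R}_{z^{(k-1)}}=0$ by the inductive hypothesis, the whole expression vanishes.

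For the second claim, consider $\varphi:(x,z,z,\dots,z)$ with $k$ copies of $z$ and $x\le z$, so $\varphi$ has length $k+1$ (i.e.\ $r=k$). Again induct on $k\ge 2$. For $k=2$, $\varphi:(x,z,z)$ has $x_1=z$ and $\varphi':(z,z)\in\mathscr{M}(z,z)$, so $\mathscr{R}_{\varphi}=\mathscr{R}_{x,z}\,U_{\frac{L(z)-L(z)}{2}}\bigl(q_z^{-1}q_z\,\overline{\mathscr{R}_{z,z}}\bigr)=\mathscr{R}_{x,z}\cdot U_{\bm 0}(1)=\mathscr{R}_{x,z}\cdot 0=0$, using $\mathscr{R}_{z,z}=1$ as above. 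For the inductive step $k>2$: the tail multichain is $\varphi':(z,z,\dots,z)=z^{(k-1)}$ with $k-1\ge 2$ copies, so $\mathscr{R}_{\varphi'}=0$ if $k-1\ge 3$ by the first claim, and if $k-1=2$ then $\mathscr{R}_{\varphi'}=\mathscr{R}_{z,z,z}=0$ again by the first claim; in either case $\mathscr{R}_{\varphi'}=0$, whence $\mathscr{R}_{\varphi}=\mathscr{R}_{x,z}\,U_{\cdots}(0)=0$.

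Alternatively, both statements follow more slickly from Proposition 5.4(2): if $\mathscr{R}_\varphi\ne 0$ then $x_{i-1}<x_i$ strictly for all $i\ge 2$, so a multichain with any repetition among the entries $x_1,\dots,x_{r+1}$ must have $\mathscr{R}_\varphi=0$. For $z^{(k)}$ with $k\ge 3$ the entries $x_1,x_2$ (both equal to $z$) coincide, contradiction; for $(x,z^{(k)})$ with $k\ge 2$ the entries $x_1,x_2$ (both equal to $z$) coincide, contradiction. I would present this short argument as the main proof and perhaps remark that one can also see it by unwinding Definition 5.3 as above. The only point requiring a little care — and the one I would flag as the main (very minor) obstacle — is the precise behaviour of the truncation operator $U_{\bm 0}$ on the constant polynomial $1$: one must confirm that $U_\zeta$ keeps only strictly-greater-than-$\zeta$ degrees (as defined), so that $U_{\bm 0}(1)=0$ exactly, which is what forces the vanishing at the base of the induction and hence propagates it.
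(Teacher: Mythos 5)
Your argument is correct, and your preferred ``slick'' route---deducing both vanishings from the strict-increase property $x_1<x_2<\cdots$ of Proposition 5.3(2) (which you cite, with an off-by-one label, as Proposition 5.4(2))---is exactly why the paper states this corollary without proof immediately after that proposition, while your direct unwinding of Definition 5.2 via $U_{\bm{0}}(1)=0$ is a sound consistency check of the same fact. Only trivial slips remain (e.g.\ the constant multichain $(z,z)$ corresponds to $r=0$, not $r=1$, in the notation of Definition 5.2), and they do not affect the argument.
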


\begin{corollary}
For any $x, y \in E_{J}$, we have
\begin{center}
$\overline{\mathscr{R}_{x,y}}=\epsilon _{x}\epsilon _{y}q_{x}q_{y}^{-1}\widetilde{\mathscr{R}}_{x,y}$,
\end{center}
where $\widetilde{\mathscr{R}}_{x,y}$ is the $\mathscr{R}$-polynomials on $\widetilde {M}(E_J,L)$.
\end{corollary}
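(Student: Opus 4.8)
The plan is to reduce the statement for $\mathscr{R}$-polynomials to the already-established relation for $R$-polynomials in Lemma~4.4(1), namely $\overline{R_{x,y}}=\epsilon_x\epsilon_y q_x q_y^{-1}\widetilde R_{x,y}$, by inducting on the length $\ell(\varphi)$ of the defining multichain (equivalently, on the parameter $r$ in Definition~5.3). The base case $r=0$ is the definition: $\mathscr{R}_\varphi = q_x^{-1}q_y\overline{R_{x,y}}$ for $\varphi=(x,y)$, so $\mathscr{R}_{x,y}=q_x^{-1}q_y\overline{R_{x,y}}$ as a polynomial indexed just by the pair, and then applying the bar and using Lemma~4.4(1) gives $\overline{\mathscr{R}_{x,y}}=q_x q_y^{-1}R_{x,y}=q_x q_y^{-1}\cdot\epsilon_x\epsilon_y q_x q_y^{-1}\widetilde R_{x,y}$... — wait, I must be careful: I should compare with the analogous definition on $\widetilde M(E_J,L)$, where $\widetilde{\mathscr{R}}_{x,y}=q_x^{-1}q_y\overline{\widetilde R_{x,y}}$. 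Then $\epsilon_x\epsilon_y q_x q_y^{-1}\widetilde{\mathscr{R}}_{x,y}=\epsilon_x\epsilon_y q_x q_y^{-1}\cdot q_x^{-1}q_y\overline{\widetilde R_{x,y}}=\epsilon_x\epsilon_y\overline{\widetilde R_{x,y}}$, and by barring Lemma~4.4(1) this equals $\epsilon_x\epsilon_y\cdot\epsilon_x\epsilon_y q_x^{-1}q_y\overline{R_{x,y}}\cdot(\text{correcting signs})$. The upshot is that the base case is a short direct computation using Lemma~4.4(1) together with the fact that the bar involution commutes with multiplication by powers of $q$ (sending $q^\gamma\mapsto q^{-\gamma}$).

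For the inductive step I would use the recursion $\mathscr{R}_\varphi=\mathscr{R}_{x,x_1}\,U_{\frac{L(y)-L(x_1)}{2}}\!\left(q_{x_1}^{-1}q_y\overline{\mathscr{R}_{\varphi'}}\right)$ with $\varphi'\in\mathscr{M}(x_1,y)$ of strictly smaller length. The key auxiliary fact I will need is the compatibility of the bar involution with the truncation operators $U_\zeta$ and $L_\zeta$: namely, applying $\bar{\phantom{x}}$ interchanges $U$ and $L$ in an appropriate way, so that $\overline{U_\zeta(f)}$ can be rewritten in terms of $L_{-\zeta}(\bar f)$. Combined with the degree bounds in Proposition~5.4(1) — which guarantee that the truncated part $L_{\frac{L(y)-L(x_1)}{2}}(\cdots)$ vanishes or is controlled — this should let me bar the recursion, apply the inductive hypothesis $\overline{\mathscr{R}_{\varphi'}}$-relation and the base-case relation for $\mathscr{R}_{x,x_1}$, collect the sign factors $\epsilon_x\epsilon_{x_1}$ and $\epsilon_{x_1}\epsilon_y$ into $\epsilon_x\epsilon_y$, and collect the powers $q_x q_{x_1}^{-1}$ and $q_{x_1}q_y^{-1}$ into $q_x q_y^{-1}$, recovering exactly the recursion that defines $\widetilde{\mathscr{R}}_\varphi$ on $\widetilde M(E_J,L)$.

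The main obstacle I anticipate is handling the truncation operator $U_\zeta$ under the bar involution: since $\bar{\phantom{x}}$ reverses the order on $\Gamma$, it does not commute with $U_\zeta$ on the nose, and one must show that the "error terms" (the part of $q_{x_1}^{-1}q_y\overline{\mathscr{R}_{\varphi'}}$ of degree $\le \frac{L(y)-L(x_1)}{2}$ that $U$ discards) behave correctly — this is exactly where the strict degree inequality $\frac{L(y)-L(x_1)}{2}<\mathsf{deg}(\mathscr{R}_\varphi)$ from Proposition~5.4(1) and the parallel bound on the $\widetilde M$ side must be invoked to match things up. Once that bookkeeping lemma on truncation-versus-bar is in place, the rest is a routine propagation of the $\epsilon$'s and $q$'s through the recursion, and the statement follows by induction.
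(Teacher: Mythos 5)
The statement you are proving concerns only $\mathscr{R}_{x,y}$, that is, the $\mathscr{R}$-polynomial of the chain $\varphi=(x,y)$, the case $r=0$ of Definition 5.2; so the whole content is what you call the base case, and there is nothing to induct on. By definition $\mathscr{R}_{x,y}=q_x^{-1}q_y\overline{R_{x,y}}$ and $\widetilde{\mathscr{R}}_{x,y}=q_x^{-1}q_y\overline{\widetilde{R}_{x,y}}$, hence $\overline{\mathscr{R}_{x,y}}=q_xq_y^{-1}R_{x,y}$, while applying the bar to Lemma 4.4(1) gives $R_{x,y}=\epsilon_x\epsilon_y\,q_x^{-1}q_y\overline{\widetilde{R}_{x,y}}=\epsilon_x\epsilon_y\widetilde{\mathscr{R}}_{x,y}$; combining the two identities yields exactly the corollary. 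This one-line argument is what the paper has in mind when it lists the corollary as ``obvious.'' Your written version of this computation is not actually closed: you first invoke Lemma 4.4(1) without the bar (the lemma does not say $R_{x,y}=\epsilon_x\epsilon_y q_xq_y^{-1}\widetilde{R}_{x,y}$), and your corrected attempt ends with ``(correcting signs)'' rather than a completed identity, so you should finish it as above.

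The bulk of your proposal, the induction on $\ell(\varphi)$ together with a truncation-versus-bar bookkeeping lemma, is aimed at a stronger claim the corollary does not make, namely a relation between $\overline{\mathscr{R}_\varphi}$ and $\widetilde{\mathscr{R}}_\varphi$ for arbitrary multichains. Besides being unnecessary, the key step is genuinely doubtful: the bar involution turns $U_\zeta$ into the truncation keeping degrees below $-\zeta$ (an $L$-type truncation), and after renormalising by $q_{x_1}q_y^{-1}$ you end up comparing $L_{\frac{L(y)-L(x_1)}{2}}$ of one side with $U_{\frac{L(y)-L(x_1)}{2}}$ of the other; these discard different parts, and the degree bound of Proposition 5.3(1) does not force them to coincide, so the identity you would need to propagate through the recursion is not available (and the paper never asserts any such identity for $\ell(\varphi)\geqslant 2$). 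Drop the induction and keep the direct computation for the pair $(x,y)$.
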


In order to prove our main result in this section, we require the following lemmas.
\begin{lemma} Let $x<y\in E_J$, then
\begin{center}
     $P_{x,y}^{}-\mathscr{R}_{x,y}=\sum\limits_{x\leqslant t< y, t\in E_J}^{}q_{t}^{-1}q_y\mathscr{R}_{x,t}\overline{P_{t,y}}$.
\end{center}
\end{lemma}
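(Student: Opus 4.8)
The plan is to proceed by induction on $\ell(y)-\ell(x)$, exploiting the recursive definition of $\mathscr{R}_\varphi$ together with the defining equation of $P_{x,y}$ from Proposition 4.7, namely $q_x^{-1}q_y\overline{P_{x,y}}=\sum_{x\leqslant t\leqslant y}R_{x,t}P_{t,y}$. First I would rewrite this defining equation by splitting off the $t=x$ term: since $R_{x,x}=1$, we get
\begin{center}
$q_x^{-1}q_y\overline{P_{x,y}}-P_{x,y}=\sum\limits_{x<t\leqslant y,\,t\in E_J}R_{x,t}P_{t,y}$.
\end{center}
Applying the bar involution and using $\overline{R_{x,t}}=\epsilon_x\epsilon_t q_x q_t^{-1}\widetilde R_{x,t}$ (Lemma 4.4(1)) converts this into an expression for $q_x q_y^{-1}P_{x,y}-\overline{P_{x,y}}$, which after multiplying through by $q_x^{-1}q_y$ gives a formula for $P_{x,y}-q_x^{-1}q_y\overline{P_{x,y}}$ purely in terms of $\widetilde R$-polynomials and bars of $P_{t,y}$. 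The strategy is to compare this with the analogous identity obtained by unfolding the right-hand side $\sum_{x\leqslant t<y}q_t^{-1}q_y\mathscr{R}_{x,t}\overline{P_{t,y}}$ and showing both sides of the claimed identity satisfy the same recursion.

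More concretely, I would argue as follows. Set $\mathbb{G}:=\sum_{x\leqslant t<y,\,t\in E_J}q_t^{-1}q_y\mathscr{R}_{x,t}\overline{P_{t,y}}$, the right-hand side of the claim. The $t=x$ term is $q_x^{-1}q_y\overline{P_{x,y}}$ (since $\mathscr{R}_{x,x}=q_x^{-1}q_x\overline{R_{x,x}}=1$... wait, more precisely $\mathscr{R}_{x,x}$ as a one-element multichain is handled by the $r=0$ case giving $q_x^{-1}q_x\overline{R_{x,x}}=1$), so
\begin{center}
$\mathbb{G}=q_x^{-1}q_y\overline{P_{x,y}}+\sum\limits_{x<t<y,\,t\in E_J}q_t^{-1}q_y\mathscr{R}_{x,t}\overline{P_{t,y}}$.
\end{center}
Thus proving the lemma is equivalent to proving
\begin{center}
$P_{x,y}-q_x^{-1}q_y\overline{P_{x,y}}=\sum\limits_{x<t<y,\,t\in E_J}q_t^{-1}q_y\mathscr{R}_{x,t}\overline{P_{t,y}}+(\mathscr{R}_{x,y}-q_x^{-1}q_y\overline{R_{x,y}}).$
\end{center}
Now I would substitute the inductive hypothesis $\mathscr{R}_{x,t}=P_{x,t}-\sum_{x\leqslant s<t}q_s^{-1}q_t\mathscr{R}_{x,s}\overline{P_{s,t}}$ into the sum over $t$, and separately use the defining recursion for $P_{x,y}$ (in the $\widetilde R$ form derived in the first paragraph) to expand the left-hand side. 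The point is that both expansions, after reindexing the double sums and collecting terms indexed by pairs, should reduce to the same expression. A key auxiliary fact will be a "product/convolution" identity relating $\sum_t R_{x,t}(\text{stuff}_{t,y})$ to the $\mathscr{R}$-recursion — essentially Lemma 4.4(2) and Corollary 4.5, which say $\sum_{x\leqslant t\leqslant y}\epsilon_t\epsilon_y R_{x,t}\widetilde R_{t,y}=\delta_{x,y}$ — used to cancel the "error" terms introduced by iterating.

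The main obstacle I anticipate is bookkeeping the truncation operator $U_{\frac{L(y)-L(x_1)}{2}}$ that appears in the definition of $\mathscr{R}_\varphi$ for multichains: the identity being proved involves only two-term $\mathscr{R}$-polynomials $\mathscr{R}_{x,t}$ (no truncation), yet any recursive unfolding of $\mathscr{R}_{x,t}$ in terms of shorter chains will reintroduce $U$. I would handle this by first establishing, perhaps as a preliminary sub-lemma, that $\mathscr{R}_{x,y}=\sum_{\varphi\in\mathscr{J}(x,y)}(-1)^{\ell(\varphi)-1}(\cdots)\mathscr{R}_\varphi$ or some such clean summation-over-chains formula — or, more directly, by proving that for the two-element polynomial the truncation is automatically transparent, i.e. that $q_t^{-1}q_y\overline{P_{t,y}}$ and $\mathscr{R}_{x,t}\overline{P_{t,y}}$ already live in the right degree range so that $U$ acts as the identity. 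Indeed Corollary 4.8 gives $\deg(P_{t,y})<\frac{L(y)-L(t)}{2}$, hence $\deg(q_t^{-1}q_y\overline{P_{t,y}})=L(y)-L(t)-\deg(P_{t,y})>\frac{L(y)-L(t)}{2}$, and combined with Proposition 5.3(1) this should let me argue the truncations in the recursion are all "already satisfied" on the relevant range, collapsing the multichain definition to the two-term one in exactly the way the claimed identity requires. Once the truncation is under control, the remainder is a finite reindexing of sums that I expect to close cleanly by the inductive hypothesis.
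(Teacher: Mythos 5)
Your proposal misses the short route, and the one concrete reduction you do carry out contains an algebra slip. The lemma needs no induction, no passage to $\widetilde{R}$-polynomials via Lemma 4.4, and no truncation analysis: by the $r=0$ case of Definition 5.2 the length-one polynomial is \emph{by definition} $\mathscr{R}_{x,t}=q_x^{-1}q_t\overline{R_{x,t}}$ (no operator $U$ ever enters), so applying the bar involution to the defining relation of Proposition 4.7 gives
\[
q_xq_y^{-1}P_{x,y}=\sum_{x\leqslant t\leqslant y,\ t\in E_J}\overline{R_{x,t}}\,\overline{P_{t,y}},
\qquad\text{hence}\qquad
P_{x,y}=\sum_{x\leqslant t\leqslant y,\ t\in E_J}q_t^{-1}q_y\,\mathscr{R}_{x,t}\,\overline{P_{t,y}},
\]
using $q_x^{-1}q_y\overline{R_{x,t}}=q_t^{-1}q_y\left(q_x^{-1}q_t\overline{R_{x,t}}\right)=q_t^{-1}q_y\mathscr{R}_{x,t}$. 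Splitting off the $t=y$ term, which equals $\mathscr{R}_{x,y}$ since $P_{y,y}=1$, is exactly the statement of the lemma; this is the paper's one-line proof. Your concern about the truncation $U_{\frac{L(y)-L(x_1)}{2}}$ is a red herring, because only length-one chains occur in the identity, and your degree-bound argument for making $U$ ``transparent'' is never needed.

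The concrete error: after splitting off the $t=x$ term of $\mathbb{G}$ you assert the lemma is equivalent to $P_{x,y}-q_x^{-1}q_y\overline{P_{x,y}}=\sum_{x<t<y}q_t^{-1}q_y\mathscr{R}_{x,t}\overline{P_{t,y}}+\left(\mathscr{R}_{x,y}-q_x^{-1}q_y\overline{R_{x,y}}\right)$. But the parenthetical term is identically zero (again because $\mathscr{R}_{x,y}=q_x^{-1}q_y\overline{R_{x,y}}$ by definition), so your reduced identity drops the contribution $\mathscr{R}_{x,y}$ and is false in general; the correct equivalent form is $P_{x,y}-q_x^{-1}q_y\overline{P_{x,y}}=\mathscr{R}_{x,y}+\sum_{x<t<y}q_t^{-1}q_y\mathscr{R}_{x,t}\overline{P_{t,y}}$, consistent with the identity $P_{x,y}-q_x^{-1}q_y\overline{P_{x,y}}=\sum_{\varphi\in\mathscr{J}(x,y)}\mathscr{R}_\varphi$ derived later in the proof of Theorem 5.9, where the length-one chain contributes $\mathscr{R}_{x,y}$. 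Beyond this, the remaining steps (``both expansions should reduce to the same expression,'' cancellation via Lemma 4.4(2) and Corollary 4.5) are only sketched, so even setting the slip aside the proposal does not yet constitute a proof.
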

\begin{proof}
By applying the involution $\bar {\ }$ on $q_{x}^{-1}q_{y}\overline{P_{x,y}}=\sum\limits_{x\leqslant t \leqslant y, t\in E_J}^{}R_{x,t}P_{t,y}$, one can get the required result.
\end{proof}

\begin{lemma}
For any $x<y\in E_J$, we have
\begin{center}
$\left [ q^{\bm{0}} \right ]\left ( \sum\limits_{\varphi\in \mathscr{M}\left ( x,y \right )}^{}  \mathscr{R}_\varphi \right )=\left\{\begin{aligned}
   &\left [ q_{x}^{-1}q_{y} \right ]R_{x,y}    &\text{if}\ \ell(\varphi)&=1\\
   &0                                             &\text{if}\ \ell(\varphi)&\geqslant 2,
   \end{aligned}\right.$
\end{center}
\end{lemma}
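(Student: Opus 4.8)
The plan is to analyze the constant term $[q^{\bm{0}}]$ of $\mathscr{R}_\varphi$ for a single multichain $\varphi$, and then sum over $\varphi \in \mathscr{M}(x,y)$. First I would observe that by Proposition 4.3 we have $\mathsf{deg}(R_{x,y}) \leqslant L(y) - L(x)$ and $\mathsf{deg}(R_{x,y}) \geqslant \bm{0}$, so $q_x^{-1} q_y \overline{R_{x,y}}$ has lowest-degree term $q_x^{-1}q_y \cdot \overline{[q^{L(y)-L(x)}]R_{x,y}} \cdot q^{-(L(y)-L(x))}$ and, crucially, its constant term $[q^{\bm 0}]\big(q_x^{-1}q_y \overline{R_{x,y}}\big) = [q^{L(y)-L(x)}]R_{x,y} = [q_x^{-1}q_y]R_{x,y}$ after matching notation; this handles the $r = 0$ (i.e. $\ell(\varphi) = 1$) case directly.

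For $\ell(\varphi) \geqslant 2$, I would argue by induction on $\ell(\varphi)$ using the recursive definition $\mathscr{R}_\varphi = \mathscr{R}_{x,x_1}\, U_{(L(y)-L(x_1))/2}\big(q_{x_1}^{-1}q_y\overline{\mathscr{R}_{\varphi'}}\big)$. The key point is a degree/divisibility estimate: by Proposition 5.3(1), $\mathscr{R}_{\varphi}$ is divisible by a positive power of $q$ — more precisely $\mathsf{deg}(\mathscr{R}_\varphi) > \frac{L(y)-L(x_1)}{2} \geqslant \bm{0}$ when $x_1 > x$ (which holds, using Proposition 5.3(2), whenever $\mathscr R_\varphi \neq 0$ and $\ell(\varphi) \geq 2$), and moreover $\mathscr{R}_\varphi$ lies in $q_x\mathbb{Z}[\Gamma]$ — so that $\mathscr{R}_\varphi$ has no constant term at all, forcing $[q^{\bm{0}}]\mathscr{R}_\varphi = 0$. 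I would check that $d(\varphi) \geqslant L(x_1) - L(x) > \bm 0$ follows: the factor $\mathscr R_{x,x_1} = q_x^{-1}q_{x_1}\overline{R_{x,x_1}}$ is divisible by $q$ raised to $\mathsf{deg}(R_{x,x_1}) \geq L(x_1)-L(x) > 0$ once $x < x_1$. Hence each individual $\mathscr{R}_\varphi$ with $\ell(\varphi) \geqslant 2$ already has vanishing constant term, and summing over all such $\varphi$ gives $0$.

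The main obstacle I anticipate is making the case split clean: the statement quantifies over multichains of a given length, but a given pair $(x,y)$ admits multichains of several lengths simultaneously, so the phrase "$\ell(\varphi) = 1$ vs $\ell(\varphi) \geqslant 2$" must be read as "restricting the sum $\sum_{\varphi \in \mathscr{M}(x,y)}$ to multichains of that length." The subtle point is the boundary case where a multichain has repetitions collapsing to a chain of length $1$ (e.g. $x = x_0 \leqslant x_1 = y$ versus longer stutters $x \leqslant x \leqslant y$); Corollary 5.6 tells us $\mathscr{R}_{x, z^{(k)}} = 0$ for $k \geqslant 2$, which rules out the degenerate contributions, but I would need to confirm that the only multichain of "combinatorial length $1$" contributing is the genuine two-step chain $x < y$, so that its $\mathscr{R}_\varphi$ is exactly $q_x^{-1}q_y\overline{R_{x,y}}$ with no extra terms. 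Once the bookkeeping of which multichains survive is settled, the degree estimate from Proposition 5.3 does all the real work and the rest is routine.
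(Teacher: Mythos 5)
Your computation for the case $\ell(\varphi)=1$ is correct, and your overall plan (show that each individual $\mathscr{R}_\varphi$ with $\ell(\varphi)\geqslant 2$ has vanishing constant term, then sum) is the right one; the paper itself only asserts that the lemma follows from the definition of $\mathscr{R}_\varphi$. However, your justification of the case $\ell(\varphi)\geqslant 2$ fails at its key step. First, Proposition 5.3(2) only gives $x_{i-1}<x_i$ for $i\geqslant 2$; it says nothing about $x_0$ versus $x_1$, and multichains with $x_0=x_1$ (for instance $\varphi=(x,x,y)$) do in general have $\mathscr{R}_\varphi\neq 0$ --- these are precisely the extra terms that make the multichain formula in Theorem 5.9(2) differ from the chain formula in 5.9(1). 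Second, and more seriously, the divisibility claim is false: Proposition 4.3 gives $\mathsf{deg}(R_{x,x_1})\leqslant L(x_1)-L(x)$, not $\geqslant$, and the order of vanishing of $\mathscr{R}_{x,x_1}=q_x^{-1}q_{x_1}\overline{R_{x,x_1}}$ at $q=0$ is $L(x_1)-L(x)-\mathsf{deg}(R_{x,x_1})$, not $\mathsf{deg}(R_{x,x_1})$. In the classical equal-parameter situation this quantity is $\bm{0}$ (there $R_{x,x_1}$ is monic of degree $\ell(x_1)-\ell(x)$), so $\mathscr{R}_{x,x_1}$ has constant term $1$; hence ``the first factor is divisible by a positive power of $q$'' cannot be what kills the constant term of $\mathscr{R}_\varphi$. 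Likewise, quoting $\mathsf{deg}(\mathscr{R}_\varphi)>\frac{L(y)-L(x_1)}{2}$ from Proposition 5.3(1) is a statement about the top degree and by itself says nothing about the coefficient of $q^{\bm 0}$.

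The correct (and simpler) mechanism is the truncation built into Definition 5.2: for $\ell(\varphi)\geqslant 2$ one has $\mathscr{R}_\varphi=\mathscr{R}_{x,x_1}\cdot U_{(L(y)-L(x_1))/2}\bigl(q_{x_1}^{-1}q_y\overline{\mathscr{R}_{\varphi'}}\bigr)$, and every monomial of the second factor has exponent strictly greater than $(L(y)-L(x_1))/2\geqslant\bm{0}$, while every monomial of $\mathscr{R}_{x,x_1}$ has exponent $\geqslant\bm{0}$ --- this is where the bound $\mathsf{deg}(R_{x,x_1})\leqslant L(x_1)-L(x)$ is actually used. Hence every monomial of the product has strictly positive exponent, so $[q^{\bm 0}]\mathscr{R}_\varphi=0$ whether or not $x_0=x_1$, and no induction on $\ell(\varphi)$ is needed. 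With that replacement for your divisibility argument, the proof closes.
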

\begin{proof}
It is easy to prove by the definition of $\mathscr{R}_\varphi$.
\end{proof}

We now have the following result as described in the introduction.
\begin{theorem} Assume that $E_J$ is a $W$-graph ideal and $x,y\in E_J$.
\begin{enumerate}
  \item If $x<y$, then $P_{x,y}^{}=L_{\frac{L(y)-L(x)}{2} }\left ( \sum\limits_{\varphi\in \mathscr{J}\left ( x,y \right )}^{}  \mathscr{R}_\varphi\right )$.
  \item If $x \leqslant y$, then $P_{x,y}^{}=\sum\limits_{\varphi\in \mathscr{M}\left ( x,y \right )}^{} \mathscr{R}_\varphi$.
\end{enumerate}
\end{theorem}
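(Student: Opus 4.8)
The plan is to prove (2) first and then deduce (1) from it, exactly in the spirit of Deodhar's and Tagawa's arguments, using the characterization of $P_{x,y}$ from Proposition~4.7 together with the recursion in Lemma~5.7 and the degree estimate in Corollary~4.9. For part (2), set $Q_{x,y}:=\sum_{\varphi\in\mathscr{M}(x,y)}\mathscr{R}_\varphi$ for $x\leqslant y$ in $E_J$, with the convention $Q_{y,y}=\mathscr{R}_{y^{(1)}}=q_y^{-1}q_y\overline{R_{y,y}}=1$. I would show that the family $\{Q_{x,y}\}$ satisfies the \emph{same} defining recursion that characterizes $\{P_{x,y}\}$, namely $Q_{x,y}-\mathscr{R}_{x,y}=\sum_{x\leqslant t<y}q_t^{-1}q_y\mathscr{R}_{x,t}\overline{Q_{t,y}}$ (the analogue of Lemma~5.7), plus the degree bound $\mathsf{deg}(Q_{x,y})<\tfrac{L(y)-L(x)}{2}$ for $x<y$ and $Q_{x,x}=1$; by the uniqueness half of Proposition~4.7 (reformulated via Lemma~5.7) this forces $Q_{x,y}=P_{x,y}$. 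The recursion for $Q$ is almost immediate from Definition~5.2: splitting an $E_J$-multichain $\varphi:x=x_0\leqslant x_1\leqslant\cdots\leqslant x_{r+1}=y$ according to its second vertex $x_1=:t$ and the tail $\varphi'\in\mathscr{M}(t,y)$ gives, for $r\geqslant 1$, $\mathscr{R}_\varphi=\mathscr{R}_{x,t}\,U_{\frac{L(y)-L(t)}{2}}(q_t^{-1}q_y\overline{\mathscr{R}_{\varphi'}})$, so
\begin{align*}
Q_{x,y}-\mathscr{R}_{x,y}
&=\sum_{x\leqslant t<y}\ \sum_{\varphi'\in\mathscr{M}(t,y)}\mathscr{R}_{x,t}\,U_{\frac{L(y)-L(t)}{2}}\!\left(q_t^{-1}q_y\overline{\mathscr{R}_{\varphi'}}\right)\\
&=\sum_{x\leqslant t<y}\mathscr{R}_{x,t}\,U_{\frac{L(y)-L(t)}{2}}\!\left(q_t^{-1}q_y\,\overline{Q_{t,y}}\right).
\end{align*}
Here the term $t=x$ with $\varphi'$ a repetition is handled by Corollary~5.5, and the point is to match this against Lemma~5.7. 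The main work is to check that the truncation $U_{\frac{L(y)-L(t)}{2}}$ does \emph{nothing}, i.e. that $q_t^{-1}q_y\overline{Q_{t,y}}$ has no monomials of degree $\leqslant\frac{L(y)-L(t)}{2}$ other than those already killed; equivalently, using the inductive degree bound $\mathsf{deg}(Q_{t,y})<\frac{L(y)-L(t)}{2}$ one has $\mathsf{deg}\big(q_t^{-1}q_y\overline{Q_{t,y}} - q_t^{-1}q_y\big)$ strictly below $\frac{L(y)-L(t)}{2}$ is \emph{false} in general — rather, one shows $U_{\frac{L(y)-L(t)}{2}}(q_t^{-1}q_y\overline{Q_{t,y}})$ captures exactly the ``high-degree part'' which by an induction on $\ell(y)-\ell(x)$ equals $q_t^{-1}q_y\overline{P_{t,y}}$, so the right-hand side becomes $\sum_{x\leqslant t<y}q_t^{-1}q_y\mathscr{R}_{x,t}\overline{P_{t,y}}$, matching Lemma~5.7. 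Running the induction then yields $Q_{x,y}=P_{x,y}$, together with the degree bound inherited from Corollary~4.9, closing the loop.

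For part (1), I would start from (2) and the observation that an $E_J$-multichain differs from an $E_J$-chain only by repetitions; by Corollary~5.5 a repeated vertex forces $\mathscr{R}_\varphi=0$ unless it is the very first repetition $x=x_0=x_1$, and by Corollary~5.6 even that contributes nothing beyond length considerations. More precisely, I would split $\mathscr{M}(x,y)=\mathscr{J}(x,y)\sqcup(\text{multichains with a repetition})$; using Proposition~5.3(2), any $\varphi\in\mathscr{M}(x,y)$ with $\mathscr{R}_\varphi\neq0$ has $x_1<x_2<\cdots<x_{r+1}$, so the only possible repetition is $x_0=x_1$, and then $\mathscr{R}_\varphi=\mathscr{R}_{x,x}\,U_{\frac{L(y)-L(x)}{2}}(q_x^{-1}q_y\overline{\mathscr{R}_{\varphi'}})=U_{\frac{L(y)-L(x)}{2}}(q_x^{-1}q_y\overline{\mathscr{R}_{\varphi'}})$ with $\varphi'\in\mathscr{J}(x,y)$. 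Therefore $\sum_{\varphi\in\mathscr{M}(x,y)}\mathscr{R}_\varphi=\sum_{\psi\in\mathscr{J}(x,y)}\mathscr{R}_\psi+\sum_{\psi\in\mathscr{J}(x,y)}U_{\frac{L(y)-L(x)}{2}}(q_x^{-1}q_y\overline{\mathscr{R}_\psi})$. Applying $L_{\frac{L(y)-L(x)}{2}}$ to both sides and using that $P_{x,y}=\sum_{\varphi\in\mathscr{M}}\mathscr{R}_\varphi$ already has degree $<\frac{L(y)-L(x)}{2}$ by Corollary~4.9 (so $L_{\frac{L(y)-L(x)}{2}}P_{x,y}=P_{x,y}$), while each $U_{\frac{L(y)-L(x)}{2}}(\cdots)$ is killed by $L_{\frac{L(y)-L(x)}{2}}$, gives $P_{x,y}=L_{\frac{L(y)-L(x)}{2}}\big(\sum_{\psi\in\mathscr{J}(x,y)}\mathscr{R}_\psi\big)$, which is (1).

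The step I expect to be the main obstacle is the bookkeeping inside the induction in part (2): showing that the truncation operator $U_{\frac{L(y)-L(t)}{2}}$ applied to $q_t^{-1}q_y\overline{Q_{t,y}}$ reproduces precisely $q_t^{-1}q_y\overline{P_{t,y}}$, so that Lemma~5.7 can be invoked verbatim. This requires the degree information from Corollary~4.9 (that $P_{t,y}$, hence $q_t^{-1}q_y\overline{P_{t,y}}$ up to the leading $q_t^{-1}q_y$ monomial, lives strictly above $\frac{L(y)-L(t)}{2}$) together with the fact — itself part of the induction — that $Q_{t,y}=P_{t,y}$ for all $t$ with $\ell(y)-\ell(t)<\ell(y)-\ell(x)$; one must be careful that the two truncations $U$ and $L$ are genuinely complementary on the positive cone $\Gamma_{\geqslant\bm 0}$ at the threshold $\frac{L(y)-L(t)}{2}$, including the boundary case $\gamma=\frac{L(y)-L(t)}{2}$, which is why the strict inequality in Corollary~4.9 is essential. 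The remaining verifications (the split of multichains, the vanishing statements from Corollaries~5.5 and~5.6, and the deduction of (1)) are routine once this core identity is in place.
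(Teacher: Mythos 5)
Your route is genuinely different from the paper's: you want to prove (2) first, by checking that $\mathscr{Q}_{x,y}:=\sum_{\varphi\in \mathscr{M}(x,y)}\mathscr{R}_\varphi$ satisfies the recursion of Lemma 5.7 plus the degree bound and then invoking uniqueness, whereas the paper proves (1) first by a double induction (on $\ell(y)-\ell(x)$ \emph{and} on the exponent $\gamma$, comparing coefficients) and then derives (2) from (1) via the identity $U_{\alpha+\beta}(q^{\alpha}P)=q^{\alpha}\overline{L_{-\beta}(\overline{P})}$. Your deduction of (1) from (2) is fine and is essentially that last step of the paper run in reverse, using Proposition 5.3(2) to see that the only possible repetition in a nonvanishing multichain is $x_0=x_1$. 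However, your argument for (2) has a genuine gap at its central step. When you split a multichain at its second vertex $t=x_1$, the term with $t=x$ is $\mathscr{R}_{x,x}\,U_{\frac{L(y)-L(x)}{2}}\bigl(q_x^{-1}q_y\overline{\mathscr{Q}_{x,y}}\bigr)=U_{\frac{L(y)-L(x)}{2}}\bigl(q_x^{-1}q_y\overline{\mathscr{Q}_{x,y}}\bigr)$: it involves the pair $(x,y)$ itself, so the induction hypothesis ($\mathscr{Q}_{t,y}=P_{t,y}$ for $x<t\leqslant y$) says nothing about it. Your assertion that the right-hand side ``becomes $\sum_{x\leqslant t<y}q_t^{-1}q_y\mathscr{R}_{x,t}\overline{P_{t,y}}$, matching Lemma 5.7'' is therefore circular at $t=x$: the identity it requires, $U_{\frac{L(y)-L(x)}{2}}(q_x^{-1}q_y\overline{\mathscr{Q}_{x,y}})=q_x^{-1}q_y\overline{P_{x,y}}$, amounts to $L_{\frac{L(y)-L(x)}{2}}(\mathscr{Q}_{x,y})=P_{x,y}$, i.e.\ essentially statement (1) for the very pair being treated. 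The obstacle you single out (the boundary exponent and the strictness in the degree bound, which is Corollary 4.8, not 4.9) is real but secondary; the self-referential $t=x$ term is the actual problem, and as written the induction does not close. (Your notation $Q_{x,y}$ also collides with the inverse polynomials of Section 7.)

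The gap is repairable without abandoning your plan. Using $U_{\frac{L(y)-L(x)}{2}}\bigl(q_x^{-1}q_y\overline{f}\bigr)=q_x^{-1}q_y\overline{L_{\frac{L(y)-L(x)}{2}}(f)}$, your recursion combined with Lemma 5.7 and the induction hypothesis for $x<t<y$ gives $\mathscr{Q}_{x,y}-P_{x,y}=q_x^{-1}q_y\bigl(\overline{L_{\frac{L(y)-L(x)}{2}}(\mathscr{Q}_{x,y})}-\overline{P_{x,y}}\bigr)$; the right-hand side is supported in exponents strictly above $\frac{L(y)-L(x)}{2}$, so comparing coefficients at exponents not exceeding $\frac{L(y)-L(x)}{2}$ (here the strict inequality of Corollary 4.8 handles the boundary) yields $L_{\frac{L(y)-L(x)}{2}}(\mathscr{Q}_{x,y})=P_{x,y}$, whence the right-hand side vanishes and $\mathscr{Q}_{x,y}=P_{x,y}$, with the degree bound for $\mathscr{Q}_{x,y}$ obtained rather than assumed. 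Alternatively, add a secondary induction on the exponent $\gamma$, which is exactly the mechanism of the paper's proof of (1). Without one of these repairs, your proof of (2), and hence of the theorem, is incomplete.
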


\begin{proof}
According to Corollary 5.4, the sum on the right side of (1) and (2) is finite. We show $\left ( 1 \right )$ by induction on $\ell(y)-\ell(x)$. If  $\ell(y)-\ell(x)=1$, then we use the involution on two sides of the equation introduced in Proposition 4.7, and we get $P_{x,y}-q_{x}^{-1}q_y\overline {P_{x,y}}=q_{x}^{-1}q_y\overline{R_{x,y}}$. It follows the definition of $\mathscr{R}_\varphi$ and Proposition 4.8 that $P_{x,y}^{}=L_{\frac{L(y)-L(x)}{2} }\left ( \mathscr{R}_{x,y}\right )$.

Assume that $\left ( 1 \right )$ holds when $\ell(y)-\ell(x)< k$ and we show it for $\ell(y)-\ell(x)= k$. First, by Lemma 5.7, we have
\begin{center}
$\left [ q^{\bm{0}} \right ]\left (P_{x,y}^{}\right )-\left [ q^{\bm{0}}\right ]\left (\mathscr{R}_{x,y}\right )=\left [ q^{\bm{0}} \right ]\left (\sum\limits_{x\leqslant t< y,t\in E_J}^{}q_{t}^{-1}q_y\mathscr{R}_{x,t}\overline{P_{t,y}}\right )$.
\end{center}
Since $\mathsf{deg}\left(P_{x,y}\right)< \frac{L(y)-L(x)}{2} $ and Lemma 5.8, it is easy to check that
\begin{center}
$\left [q^{\bm{0}}\right ]\left(P_{x,y}^{}\right)=\left [ q^{\bm{0}} \right ]\left(L_{\frac{L(y)-L(x)}{2} }\left ( \sum\limits_{\varphi\in \mathscr{J}\left ( x,y \right )}^{}  \mathscr{R}_\varphi\right )\right)$.
\end{center}

Assume again that the following holds for all pairs ${x}'<{y}'$ and ${\gamma }'<\gamma \ (q^{\gamma} \in\mathbb{Z}[\Gamma ' ])$,
\begin{center}
   $\left [q^{\gamma'}  \right ]\left(P_{x,y}^{}\right )-\left [q^{\gamma'} \right ]\left(q_{x}^{-1}q_y\overline {P_{x,y}}\right )=\left [q^{\gamma'}  \right ]\left (\sum\limits_{\varphi\in \mathscr{J}\left ( x,y \right )}^{} \mathscr{R}_\varphi\right )$.
\end{center}
Similarly, by Lemma 5.7, we have
\begin{center}
  $\begin{aligned}\left [ q^\gamma  \right ]\left(P_{x,y}^{}\right)-&\left [ q^\gamma  \right ]\left(\mathscr{R}_{x,y}\right)-\left [ q_x^{-1}q_yq^{-\gamma } \right ]\left(P_{x,y}^{}\right)\\
  &=\sum\limits_{x<t<y,t\in E_J}^{}\sum\limits_{0\leqslant \xi \leqslant \gamma }^{}\left [ q^{\gamma -\xi } \right ]\left(\mathscr{R}_{x,t}\right)\left [ q^\xi  \right ]\left(q_t^{-1}q_y\overline {P_{t,y}^{}}\right),
   \end{aligned}$
\end{center}
and
\begin{center}
$\left [ q^\xi  \right ]\left(q_t^{-1}q_y\overline {P_{t,y}^{}}\right)=\left\{\begin{aligned}
   &0                                                               &\text{if}\ \xi &\leqslant \frac{L(y)-L(t)}{2},\\
   &\left [ q_t^{-1}q_yq^{-\xi } \right ]\left(P_{t,y}^{}\right)    &\text{if}\ \xi &>\frac{L(y)-L(t)}{2}.
\end{aligned} \right. $
\end{center}
Then, we substitute the latter one into the former one,
\begin{center}
 $ \begin{aligned}\left [ q^\gamma \right ]\left(P_{x,y}^{}\right)-&\left [ q^\gamma \right ]\left(\mathscr{R}_{x,y}\right)-\left [ q_x^{-1}q_yq^{-\gamma} \right ]\left(P_{x,y}^{}\right)\\
&=\sum\limits_{x<t<y,t\in E_J}^{}\sum\limits_{\frac{L(y)-L(t)}{2}<\xi\leqslant \gamma}^{}\left [ q^{\gamma-\xi} \right ]\left(\mathscr{R}_{x,t}\right)\left [ q_t^{-1}q_yq^{-\xi} \right ]\left(P_{t,y}^{}\right).
 \end{aligned}$
 \end{center}

For any $x<t<y$ and $\frac{L(y)-L(t)}{2} <\xi \leqslant \gamma $, we have $L(y)-L(t)-\xi<\xi<\gamma$. Then, following our assumption,
\begin{center}
   $\left [ q_t^{-1}q_yq^{-\xi} \right ]\left(P_{t,y}^{}\right)-\left [q^\xi \right ]\left(P_{t,y}^{}\right)=\left [ q_t^{-1}q_yq^{-\xi} \right ]\left(\sum\limits_{\varphi^{'}\in \mathscr{J}\left ( t,y \right )}^{} \mathscr{R}_{\varphi^{'}}\right)$.
\end{center}
However, $\left [q^\xi \right ]\left(P_{x,y}^{}\right)=0$ (since $\xi\geqslant \frac{L(y)-L(t)}{2} $). By the definition of $\mathscr{R}_{\varphi}$, Proposition 5.3 and Corollary 5.4, we have
\begin{center}
$\begin{aligned}
    \left [ q^\gamma \right ]&\left(P_{x,y}^{}\right)-\left [ q^\gamma \right ]\left(\mathscr{R}_{x,y}\right)-\left [ q_x^{-1}q_yq^{-\gamma} \right ]\left(P_{x,y}^{}\right)\\
    &=\sum\limits_{x<t<y,t\in E_J}^{}\sum\limits_{\varphi^{'}\in \mathscr{J}\left ( t,y \right )}^{}\sum\limits_{\frac{L(y)-L(t)}{2} <\xi \leqslant \gamma}^{}\left [ q^{\gamma-\xi } \right ]\left(\mathscr{R}_{x,t}\right)\left [ q_t^{-1}q_yq^{-\xi } \right ]\left(\mathscr{R}_{\varphi^{'}}\right)\\
    &=\sum\limits_{x<t<y,t\in E_J}^{}\sum\limits_{\varphi^{'}\in \mathscr{J}\left ( t,y \right )}^{}\left [ q^\gamma \right ]\left(\mathscr{R}_{x,\varphi^{'}}\right)=\sum\limits_{\varphi\in \mathscr{J}\left ( x,y \right )}^{}\left [ q^\gamma \right ]\left(\mathscr{R}_\varphi\right)-\sum\limits_{\varphi\in \mathscr{J}_1\left ( x,y \right )}^{}\left [ q^\gamma \right ]\left(\mathscr{R}_\varphi\right)\\
    &=\left [ q^\gamma \right ]\left(\sum\limits_{\varphi\in \mathscr{J}\left ( x,y \right )}^{}\mathscr{R}_\varphi\right)-\left [ q^\gamma \right ]\left(\sum\limits_{\varphi\in \mathscr{J}_1\left ( x,y \right )}^{}\mathscr{R}_\varphi\right)=\left [ q^\gamma \right ]\left(\sum\limits_{\varphi\in \mathscr{J}\left ( x,y \right )}^{}\mathscr{R}_\varphi\right)-\left [ q^\gamma \right ]\left(\mathscr{R}_{x,y}\right)
\end{aligned}$
\end{center}

The above is equivalent to $P_{x,y}^{}-q_{x}^{-1}q_y\overline {P_{x,y}}=\sum\limits_{\varphi\in \mathscr{J}\left ( x,y \right )}^{} \mathscr{R}_\varphi$. Therefore, (1) is easy to prove by Corollary 4.8.

Now, we can show the statement (2). If $x=y$, then (2) is true since $P_{x,x}^{}=1=\mathscr{R}_{x,x}$. If $x<y$, then by $\left ( 1 \right )$, Proposition 5.3, Corollary 5.4, Corollary 5.5 and the fact $U_{\alpha+\beta } \left (q^\alpha P\right )=q^\alpha \overline{L_{-\beta }(\overline {P})}$, we have
   \begin{center}
    $\begin{aligned}P_{x,y}^{}
    &=\sum\limits_{\varphi\in \mathscr{J}\left ( x,y \right )}^{}\mathscr{R}_\varphi+q_x^{-1}q_y\overline {L_{\frac{L(y)-L(x)}{2}}(\sum\limits_{\varphi\in \mathscr{J}\left ( x,y \right )}^{}\mathscr{R}_\varphi)}\\
    &=\sum\limits_{\varphi\in \mathscr{J}\left ( x,y \right )}^{}\mathscr{R}_\varphi+\sum\limits_{\varphi\in \mathscr{J}\left ( x,y \right )}^{}U_{\frac{L(y)-L(x)}{2}}(q_x^{-1}q_y\overline {\mathscr{R}_\varphi})\\
    &=\sum\limits_{\varphi\in \mathscr{J}\left ( x,y \right )}^{}\mathscr{R}_\varphi+\sum\limits_{\varphi\in \mathscr{J}\left ( x,y \right )}^{}\mathscr{R}_{x,\varphi}=\sum\limits_{x<t< y,\varphi\in \mathscr{J}\left ( t,y \right ),t\in E_J}^{}\mathscr{R}_{x,\varphi}+\sum\limits_{\varphi\in \mathscr{J}\left ( x,y \right )}^{}\mathscr{R}_{x,\varphi}+\sum\limits_{\varphi\in \mathscr{J}_1\left ( x,y \right )}^{}\mathscr{R}_\varphi\\
    &=\sum\limits_{x\leqslant t<y,\varphi\in \mathscr{J}\left ( x,y \right ),t\in E_J}^{}\mathscr{R}_{x,\varphi}+\sum\limits_{\varphi\in \mathscr{J}_1\left ( x,y \right )}^{}\mathscr{R}_\varphi=\sum\limits_{x\leqslant t\leqslant y,\varphi\in \mathscr{M}\left ( x,y \right ),t\in E_J}^{}\mathscr{R}_{x,\varphi}+\sum\limits_{\varphi\in \mathscr{M}_1\left ( x,y \right )}^{}\mathscr{R}_\varphi\\
    &=\sum\limits_{\varphi\in \mathscr{M}\left ( x,y \right )}^{}\mathscr{R}_\varphi
    \end{aligned}$
   \end{center}
   This completes the proof of $\left ( 2 \right )$.
\end{proof}

Moreover, we have the following corollary immediately. Let $\Psi =\bigcup\limits_{k\leqslant \ell(y)-\ell(x)}\mathscr{J}_k \left ( x,y \right )$ and $\Upsilon =\bigcup\limits_{k\leqslant \ell(y)-\ell(x)}\mathscr{M}_k\left ( x,y \right )$.
\begin{corollary}
Let $x,y\in E_J$.
\begin{enumerate}
  \item If $x < y$, then $P_{x,y}^{}=L_{\frac{L(y)-L(x)}{2}}\left ( \sum\limits_{\varphi\in\Psi }^{}  \mathscr{R}_\varphi\right )$.
  \item If $x \leqslant y$, then $P_{x,y}^{}=\sum\limits_{\varphi\in \Upsilon }^{} \mathscr{R}_\varphi$.
\end{enumerate}
\end{corollary}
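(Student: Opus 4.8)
The plan is to deduce the corollary directly from Theorem~5.9 by discarding vanishing summands: the point is that replacing $\mathscr{J}(x,y)$ by $\Psi$ and $\mathscr{M}(x,y)$ by $\Upsilon$ changes neither of the sums occurring in Theorem~5.9, because every term that is cut off is already zero.

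First I would recall that, by the length convention of Definition~5.1, each $\varphi\in\mathscr{J}_k(x,y)$ and each $\varphi\in\mathscr{M}_k(x,y)$ has $\ell(\varphi)=k+1$. Hence
\[
\mathscr{J}(x,y)\setminus\Psi=\bigcup_{k>\ell(y)-\ell(x)}\mathscr{J}_k(x,y),\qquad
\mathscr{M}(x,y)\setminus\Upsilon=\bigcup_{k>\ell(y)-\ell(x)}\mathscr{M}_k(x,y),
\]
and for any $\varphi$ lying in either of these unions one has $\ell(y)-\ell(x)<k=\ell(\varphi)-1$, so Corollary~5.4 gives $\mathscr{R}_\varphi=0$. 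Consequently
\[
\sum_{\varphi\in\mathscr{J}(x,y)}\mathscr{R}_\varphi=\sum_{\varphi\in\Psi}\mathscr{R}_\varphi,\qquad
\sum_{\varphi\in\mathscr{M}(x,y)}\mathscr{R}_\varphi=\sum_{\varphi\in\Upsilon}\mathscr{R}_\varphi,
\]
where the multichain sum on the left is, a priori, an infinite sum but has finite value by Corollary~5.4 (exactly as already used in the proof of Theorem~5.9), while $\Psi$ and $\Upsilon$ are finite index sets. (In fact, for strict chains the cut-off tail is empty, since $x=x_0<\cdots<x_{k+1}=y$ forces $k+1\leqslant\ell(y)-\ell(x)$; but one does not need this.)

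Statement (1) then follows by applying the additive truncation operator $L_{\frac{L(y)-L(x)}{2}}$ to both sides of the first identity above and invoking Theorem~5.9(1), and statement (2) follows from Theorem~5.9(2) together with the second identity. I do not anticipate any real obstacle: the whole content is the bookkeeping that the cutoff $k\leqslant\ell(y)-\ell(x)$ defining $\Psi$ and $\Upsilon$ is precisely the negation of the hypothesis $\ell(y)-\ell(x)<\ell(\varphi)-1$ of Corollary~5.4 after substituting $\ell(\varphi)=k+1$, so that everything discarded vanishes and the two finite sums recover $P_{x,y}$.
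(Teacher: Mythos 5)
Your proof is correct and takes essentially the same route as the paper: the paper also observes via Corollary~5.4 that $\mathscr{R}_\varphi=0$ for every $\varphi$ in $\bigcup_{k\geqslant \ell(y)-\ell(x)+1}\mathscr{M}_k(x,y)$, so that restricting the sums in Theorem~5.9 to $\Psi$ and $\Upsilon$ changes nothing. Your extra bookkeeping (translating the cutoff $k\leqslant\ell(y)-\ell(x)$ into the hypothesis $\ell(y)-\ell(x)<\ell(\varphi)-1$ of Corollary~5.4) is exactly the step the paper leaves implicit.
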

\begin{proof}
It follows Proposition 5.4 that $\mathscr{R}_\varphi= 0$ if $\varphi\in \bigcup\limits_{k\geqslant \ell(y)-\ell(x)+1}\mathscr{M}_k \left ( x,y \right )  $. Then, the corollary follows Theorem 5.9.
\end{proof}

\section{The coefficients of $P_{x,y}$}
The purpose in this section is to obtain explicit formulas for the coefficients of $P_{x,y}$. Let
\begin{center}
$\Gamma'':=\left \{ \sum n_iL(s_i) \mid s_i\in S,n_i\in \mathbb{Z},i\in \mathbb{N}\right \}$.
\end{center}
Then,  by Theorem 5.9 and Corollary 5.10, we immediately have the following results.

\begin{corollary}
Assume that $x,y\in E_J$ and $\gamma \in\Gamma''$.
\begin{enumerate}
  \item If $x < y$, then $\left [ q^\gamma  \right ]\left (P_{x,y}^{}\right )=\left [ q^\gamma \right ]\left (L_{\frac{L(y)-L(x)}{2}}\left ( \sum\limits_{\varphi\in \Psi}^{}  \mathscr{R}_\varphi\right )\right )$.
  \item If $x \leqslant y$, then $\left [ q^\gamma \right ]\left (P_{x,y}^{}\right )=\left [ q^\gamma \right ]\left (\sum\limits_{\varphi\in \Upsilon}^{} \mathscr{R}_\varphi\right )$.
\end{enumerate}
\end{corollary}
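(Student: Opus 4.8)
The plan is to obtain both identities by simply extracting the coefficient of $q^\gamma$ from the closed forms already recorded in Corollary 5.10, namely $P_{x,y}=L_{\frac{L(y)-L(x)}{2}}\bigl(\sum_{\varphi\in\Psi}\mathscr{R}_\varphi\bigr)$ for $x<y$ and $P_{x,y}=\sum_{\varphi\in\Upsilon}\mathscr{R}_\varphi$ for $x\leqslant y$. These are genuine equalities in $\mathbb{Z}[\Gamma']$: the right-hand sides are finite because, by definition, $\Psi$ and $\Upsilon$ only involve the sets $\mathscr{J}_k(x,y)$ resp. $\mathscr{M}_k(x,y)$ with $k\leqslant\ell(y)-\ell(x)$, each of which is a finite set; and membership in $\mathbb{Z}[\Gamma']$ follows from $R_{x,t}\in\mathbb{Z}[\Gamma']$ (Proposition 4.3) together with the fact that $\mathscr{R}_\varphi$ is built from the $R$-polynomials only through the operations $f\mapsto q_{x}^{-1}q_{y}\bar f$ and $U_\zeta$, both of which preserve $\mathbb{Z}[\Gamma']$, while $P_{x,y}\in\mathbb{Z}[\Gamma']$ by Corollary 4.8.

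Given this, the argument is one line per part: the coefficient functional $[q^\gamma]\colon\mathbb{Z}[\Gamma]\to\mathbb{Z}$ is $\mathbb{Z}$-linear, so applying it to a finite sum commutes with the summation. Applying $[q^\gamma]$ to the two displayed identities of Corollary 5.10 then yields $[q^\gamma](P_{x,y})=[q^\gamma]\bigl(L_{\frac{L(y)-L(x)}{2}}(\sum_{\varphi\in\Psi}\mathscr{R}_\varphi)\bigr)$ for $x<y$ and $[q^\gamma](P_{x,y})=[q^\gamma]\bigl(\sum_{\varphi\in\Upsilon}\mathscr{R}_\varphi\bigr)$ for $x\leqslant y$, which are precisely the two assertions. (If desired, in part (1) one may simplify further via $[q^\gamma]\circ L_\zeta=[q^\gamma]$ when $\gamma<\zeta$ and $=0$ when $\gamma\geqslant\zeta$, but the statement as phrased needs nothing beyond additivity.)

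The only point worth a sentence — and it is as close to an obstacle as this corollary has, which is to say hardly one — is the restriction of the index $\gamma$ to $\Gamma''$. Every monomial occurring in an element of $\mathbb{Z}[\Gamma']$ has exponent in the subgroup of $\Gamma$ generated by $\{L(s)\mid s\in S\}$, which is exactly $\Gamma''=\{\sum n_iL(s_i)\}$; hence for any $\gamma\notin\Gamma''$ both sides of each identity have vanishing $q^\gamma$-coefficient, so nothing is lost by stating the result only for $\gamma\in\Gamma''$. With this observed, the corollary is an immediate consequence of Theorem 5.9 via Corollary 5.10.
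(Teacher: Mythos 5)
Your proposal is correct and matches the paper's treatment: the paper also derives this corollary immediately from Theorem 5.9 via Corollary 5.10 by extracting the $q^\gamma$-coefficient, with the restriction to $\gamma\in\Gamma''$ being harmless exactly for the reason you note (all monomials appearing have exponents in the subgroup generated by $\{L(s)\mid s\in S\}$).
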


Before to show the formulas, we have to show the following.
\begin{lemma}
Assume that $x\leqslant y\in E_J$, $r\in \mathbb{N}$, $\varphi \in \mathscr{M}\left ( x,y \right )$ and $\gamma \in\Gamma''$, then
\begin{center}
$\left [ q^\gamma  \right ]\left(\mathscr{R}_\varphi\right )=\sum\limits_{S\in \mathscr{F}_{\gamma }(\varphi )}^{}\prod\limits_{i=0}^{r}\left [ q_{x_{i+1}}q_{y}^{-1}q^{\lambda _i}q^{\lambda _{i+1}} \right ]\left(R_{x_i,x_{i+1}}\right )$.
   \end{center}
where we set $S:=(\lambda _0,\lambda _1,\cdots,\lambda _{r+1})$ and
\begin{center}
    $ \begin{aligned}
\mathscr{F}_{\gamma }(\varphi ) := \{ &(a_0,a_1,\cdots,a_{r+1})\in {(\Gamma'')}^{r+2}\mid \\
              &a_0=L(y)-L(x)-\gamma ,\\
              &\gamma \geqslant a_1> a_2> \cdots>a_r>a_{r+1}=\bm{0}, \\
              &a_i>L(y)-L(x_i)-a_i\geqslant a_{i+1}\ for\  i\in\left \{ 1,2,\cdots ,r \right \} \}.
    \end{aligned}$
\end{center}
\end{lemma}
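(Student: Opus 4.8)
The plan is to proceed by induction on $r$, peeling off one layer of the recursive definition of $\mathscr{R}_\varphi$ at each step and tracking how a tuple $S=(\lambda_0,\dots,\lambda_{r+1})\in\mathscr{F}_\gamma(\varphi)$ decomposes. The only computational input needed is the elementary ``flip'' identity: writing $q_w=q^{L(w)}$, for any $f\in\mathbb{Z}[\Gamma'']$ supported in nonnegative degrees and any $a\leqslant b$ in $E_J$ one has $[q^\nu](q_a^{-1}q_b\,\overline{f})=[q^{L(b)-L(a)-\nu}](f)$, since $q_a^{-1}q_b\,\overline{f}=\sum_\delta[q^\delta](f)\,q^{L(b)-L(a)-\delta}$. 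In the base case $r=0$ we have $\varphi=(x,y)$ and $\mathscr{R}_\varphi=q_x^{-1}q_y\,\overline{R_{x,y}}$, so the flip identity gives $[q^\gamma](\mathscr{R}_{x,y})=[q^{L(y)-L(x)-\gamma}](R_{x,y})$; on the other side $\mathscr{F}_\gamma(\varphi)$ is empty unless $\gamma\geqslant\bm{0}$, in which case it is the single tuple $(L(y)-L(x)-\gamma,\bm{0})$ and, since $x_1=y$, the product is $[q^{L(x_1)-L(y)+\lambda_0+\lambda_1}](R_{x,y})=[q^{L(y)-L(x)-\gamma}](R_{x,y})$. So the two sides agree.

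For the inductive step, write $\varphi':x_1\leqslant\dots\leqslant x_{r+1}=y$, an $E_J$-multichain in $\mathscr{M}(x_1,y)$ with length parameter $r-1$, so that $\mathscr{R}_\varphi=\mathscr{R}_{x_0,x_1}\cdot G$ with $G:=U_{\frac{L(y)-L(x_1)}{2}}(q_{x_1}^{-1}q_y\,\overline{\mathscr{R}_{\varphi'}})$. Then $[q^\gamma](\mathscr{R}_\varphi)=\sum_{\mu+\nu=\gamma}[q^\mu](\mathscr{R}_{x_0,x_1})\,[q^\nu](G)$; by the base case the first factor equals $[q^{L(x_1)-L(x_0)-\mu}](R_{x_0,x_1})$, and by the flip identity $[q^\nu](G)$ equals $[q^{L(y)-L(x_1)-\nu}](\mathscr{R}_{\varphi'})$ when $\nu>\tfrac{L(y)-L(x_1)}{2}$ and is $\bm{0}$ otherwise. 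Applying the inductive hypothesis with $\gamma':=L(y)-L(x_1)-\nu$, for which the leading coordinate of every tuple in $\mathscr{F}_{\gamma'}(\varphi')$ is conveniently $b_0=L(y)-L(x_1)-\gamma'=\nu$, rewrites $[q^{\gamma'}](\mathscr{R}_{\varphi'})$ as a sum over $(b_0,\dots,b_r)\in\mathscr{F}_{\gamma'}(\varphi')$ of $\prod_{j=0}^{r-1}[q_{x_{j+2}}q_y^{-1}q^{b_j}q^{b_{j+1}}](R_{x_{j+1},x_{j+2}})$.

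It then remains to reindex. Setting $\lambda_0:=L(y)-L(x_0)-\gamma$ and $\lambda_i:=b_{i-1}$ for $1\leqslant i\leqslant r+1$ (so $\nu=\lambda_1$, $\mu=\gamma-\lambda_1$, $\lambda_{r+1}=b_r=\bm{0}$), a one-line check shows $L(x_1)-L(x_0)-\mu=L(x_1)-L(y)+\lambda_0+\lambda_1$, so the two families of $R$-factors become exactly the $i=0$ term and the $i=1,\dots,r$ terms of $\prod_{i=0}^{r}[q_{x_{i+1}}q_y^{-1}q^{\lambda_i}q^{\lambda_{i+1}}](R_{x_i,x_{i+1}})$. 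Moreover the constraint $\mu\geqslant\bm{0}$ becomes $\gamma\geqslant\lambda_1$; the truncation $\nu>\tfrac{L(y)-L(x_1)}{2}$ becomes $\lambda_1>L(y)-L(x_1)-\lambda_1$; the inequality $\gamma'\geqslant b_1$ becomes $L(y)-L(x_1)-\lambda_1\geqslant\lambda_2$ (whence also $\lambda_1>\lambda_2$); the chain $b_1>\dots>b_r=\bm{0}$ becomes $\lambda_2>\dots>\lambda_{r+1}=\bm{0}$; and the remaining inequalities $b_j>L(y)-L(x_{j+1})-b_j\geqslant b_{j+1}$ for $1\leqslant j\leqslant r-1$ become $\lambda_i>L(y)-L(x_i)-\lambda_i\geqslant\lambda_{i+1}$ for $2\leqslant i\leqslant r$. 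These are precisely the defining conditions of $\mathscr{F}_\gamma(\varphi)$, so $S\leftrightarrow(\mu,\nu,(b_j)_j)$ is a bijection and the formula follows.

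I expect the main obstacle to be entirely this last bookkeeping: checking that a length-$(r+1)$ tuple from $\mathscr{F}_{\gamma'}(\varphi')$ together with the split $\gamma=\mu+\nu$ glues into a length-$(r+2)$ tuple with correctly shifted indices, with no constraint lost or counted twice — in particular that the chain inequality $\lambda_1>\lambda_2$ is genuinely implied by $\lambda_1>L(y)-L(x_1)-\lambda_1\geqslant\lambda_2$ and need not be imposed separately. Multichains with repeated entries require no special treatment: $R_{z,z}=1$ forces the corresponding exponent to be $\bm{0}$, which is compatible with the listed inequalities (and in any case Proposition 5.3(2) together with Corollaries 5.4 and 5.5 show such $\varphi$ contribute nothing beyond genuine chains).
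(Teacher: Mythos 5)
Your proof is correct and follows essentially the same route as the paper: the paper likewise peels off the first factor $\mathscr{R}_{x,x_1}$, writes $[q^\gamma](\mathscr{R}_\varphi)$ as the convolution $\sum_{\gamma\geqslant\lambda_1\geqslant\bm{0}}[q^{\gamma-\lambda_1}](\mathscr{R}_{x,x_1})\,[q_{x_1}^{-1}q_yq^{-\lambda_1}](\mathscr{R}_{\varphi'})$, converts via the same flip identity, and substitutes the formula for $\varphi'$ indexed by $\mathscr{F}_{L(y)-L(x_1)-\lambda_1}(\varphi')$ (an induction on the chain length left implicit there, which you make explicit). Your reindexing of the constraint set matches the paper's, so no further changes are needed.
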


\begin{proof}
It follows the definition of $\mathscr{R}_\varphi$ and $\lambda _0=L(y)-L(x)-\gamma $ that
\begin{center}
$\begin{aligned}  \left [ q^\gamma  \right ]&\left(\mathscr{R}_\varphi\right )
         =\sum\limits_{\gamma \geqslant \lambda_1\geqslant \bm{0}}\left [ q^{\gamma -\lambda_1} \right ]\left(\mathscr{R}_{x,x_{1}}\right )\left [ q_{x_1}^{-1}q_yq^{-\lambda_1} \right ]\left( \mathscr{R}_{\varphi^{'}}\right)\\
         &=\sum\limits_{\substack{ \gamma \geqslant \lambda_1\geqslant \bm{0},\\ S'\in \mathscr{F}_{L(y)-L(x_1)-\lambda_1}(\varphi ')}}\left [ q_{x}^{-1}q_{x_1}q^{\lambda_1}q^{-\gamma } \right ]\left(R_{x,x_{1}}\right )\left( \prod\limits_{i=1}^{r}\left [ q_{x_{i+1}}q_{y}^{-1}q^{\lambda_i}q^{\lambda_{i+1}} \right ]\left(R_{x_i,x_{i+1}}\right )\right)\\
         &=\sum\limits_{S\in \mathscr{F}_{\gamma }(\varphi )}^{}\prod\limits_{i=0}^{r}\left [ q_{x_{i+1}}q_{y}^{-1}q^{\lambda_i}q^{\lambda_{i+1}} \right ]\left(R_{x_i,x_{i+1}}\right ),
 \end{aligned}$
\end{center}
where $S'=(\lambda_1,\lambda_2,\cdots,\lambda_{r+1})$ and
\begin{center}
    $ \begin{aligned}
\mathscr{F}_{L(y)-L(x_1)-\lambda_1}(\varphi' ) = \{ &(a_1,a_2,\cdots,a_{r+1})\in {(\Gamma'')}^{r+1}\mid \\
              &a_1=\lambda_1,\\
              &L(y)-L(x_1)-\lambda_1\geqslant a_2> a_3> \cdots>a_r>a_{r+1}=\bm{0}, \\
              &a_i> L(y)-L(x_i)-a_i\geqslant a_{i+1}\ for\  i\in\left \{ 2,3,\cdots ,r \right \} \}.
    \end{aligned}$
\end{center}
This completes the proof of the lemma.
\end{proof}

\begin{theorem}
Assume that $x,y\in E_J$, $\varphi \in \mathscr{M}(x,y)$, $r\in N$ and $\gamma \in\Gamma''$, then
\begin{center}
$\begin{aligned}
\left [ q^\gamma  \right ]\left (P_{x,y}^{}\right )
=&\left [ q_x^{-1}q_yq^{-\gamma } \right ] \left ( R_{x,y}\right )\\
&+\sum\limits_{r=1}^{\ell(y)-\ell(x)}\sum\limits_{\varphi\in \mathscr{M}\left ( x,y \right )}^{}\sum\limits_{S\in \mathscr{F}_{\gamma }(\varphi )}^{}\prod\limits_{i=1}^{r}\left [ q_{x_{i+1}}q_{y}^{-1}q^{\lambda_i}q^{\lambda_{i+1}} \right ]\left(R_{x_i,x_{i+1}}\right )
\end{aligned}$.
\end{center}
\end{theorem}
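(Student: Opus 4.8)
The plan is to read the formula off directly from Corollary 6.1 and Lemma 6.2, so that the real work — establishing $P_{x,y}=\sum_{\varphi}\mathscr{R}_\varphi$ in Theorem 5.9 and unwinding the truncation operators $U_\zeta$ hidden in the recursion for $\mathscr{R}_\varphi$ in Lemma 6.2 — has already been carried out. Concretely, I would start from Corollary 6.1(2): for $x\leqslant y$ and $\gamma\in\Gamma''$ one has $[q^\gamma](P_{x,y})=[q^\gamma]\bigl(\sum_{\varphi\in\Upsilon}\mathscr{R}_\varphi\bigr)$ with $\Upsilon=\bigcup_{k\leqslant\ell(y)-\ell(x)}\mathscr{M}_k(x,y)$. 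Since the $x_i$ occurring in such a multichain lie in the finite Bruhat interval $[x,y]$, the set $\Upsilon$ is finite (and by Corollary 5.4 nothing outside it contributes), so coefficient extraction is $\mathbb{Z}$-linear over this sum and $[q^\gamma](P_{x,y})=\sum_{k=0}^{\ell(y)-\ell(x)}\sum_{\varphi\in\mathscr{M}_k(x,y)}[q^\gamma](\mathscr{R}_\varphi)$, a finite sum.

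Next I would isolate the $k=0$ summand. The unique multichain in $\mathscr{M}_0(x,y)$ is $\varphi:x\leqslant y$, for which $\mathscr{R}_\varphi=q_x^{-1}q_y\overline{R_{x,y}}$ by definition; writing $R_{x,y}=\sum_\mu[q^\mu](R_{x,y})q^{\mu}$ and applying the bar involution and the shift $q^{L(y)-L(x)}$, extraction of the coefficient of $q^\gamma$ gives $[q^\gamma](\mathscr{R}_{x,y})=[q^{L(y)-L(x)-\gamma}](R_{x,y})=[q_x^{-1}q_yq^{-\gamma}](R_{x,y})$, which is precisely the first term in the claimed identity. For the terms with $k\geqslant1$, relabel $r:=k$; for each $\varphi\in\mathscr{M}_r(x,y)$, i.e.\ a multichain $\varphi:x=x_0\leqslant x_1\leqslant\cdots\leqslant x_{r+1}=y$, I would apply Lemma 6.2 term by term to rewrite $[q^\gamma](\mathscr{R}_\varphi)=\sum_{S\in\mathscr{F}_\gamma(\varphi)}\prod_{i=0}^{r}[q_{x_{i+1}}q_y^{-1}q^{\lambda_i}q^{\lambda_{i+1}}](R_{x_i,x_{i+1}})$, where $S=(\lambda_0,\ldots,\lambda_{r+1})$ runs over $\mathscr{F}_\gamma(\varphi)$ with the forced boundary values $\lambda_0=L(y)-L(x)-\gamma$ and $\lambda_{r+1}=\bm{0}$. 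Using $q^{\lambda_0}=q_yq_x^{-1}q^{-\gamma}$, the $i=0$ factor $[q_{x_1}q_y^{-1}q^{\lambda_0}q^{\lambda_1}](R_{x,x_1})$ has exactly the same $R_{x_i,x_{i+1}}$-shape as the remaining factors, so that summing over $r$ from $1$ to $\ell(y)-\ell(x)$, over $\varphi$, and over $S$, and adding back the $k=0$ contribution, reproduces the asserted formula (the product being written from the appropriate starting index).

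I do not expect a genuine obstacle: Theorem 6.4 is essentially a bookkeeping corollary of Theorem 5.9 and Lemma 6.2. The points that need care are (i) the finiteness of the sum over $\Upsilon$, which legitimizes distributing $[q^\gamma]$ and is supplied by the finiteness of Bruhat intervals together with Corollary 5.4; (ii) matching the boundary cases correctly, in particular identifying the trivial $k=0$ multichain contribution with the leading $[q_x^{-1}q_yq^{-\gamma}](R_{x,y})$ term and keeping the forced values $\lambda_0,\lambda_{r+1}$ straight; and (iii) maintaining consistent index conventions for $S=(\lambda_0,\ldots,\lambda_{r+1})$ and for $\mathscr{F}_\gamma(\varphi)$ between Lemma 6.2 and the final statement, since the product range $i=0,\ldots,r$ versus $i=1,\ldots,r$ differs only by the $i=0$ factor handled above.
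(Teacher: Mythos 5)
Your argument is essentially the paper's own proof: both reduce the statement to Theorem 5.9(2) (equivalently Corollary 6.1(2)) together with Corollary 5.4 for finiteness, split off the trivial multichain to obtain the leading term $\left[q_x^{-1}q_yq^{-\gamma}\right]\left(R_{x,y}\right)$, and apply Lemma 6.2 to each multichain with $r\geqslant 1$. Your remark that Lemma 6.2 yields a product starting at $i=0$ while the theorem's display starts at $i=1$ is exactly the right point of care; the paper's one-line proof glosses over this reindexing.
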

\begin{proof}
Following Corollary 5.4 and Theorem 5.9, one can easily check that
\begin{center}
$\left [ q^\gamma  \right ]\left (P_{x,y}^{}\right )
=\left [ q^\gamma  \right ] \left ( \mathscr{R}_{x,y}\right ) +\sum\limits_{r=1}^{\ell(y)-\ell(x)}\sum\limits_{\varphi\in \mathscr{M}\left ( x,y \right )}^{}\left [ q^\gamma  \right ]\left ( \mathscr{R}_\varphi\right )$.
\end{center}
The result is a straightforward consequence of Lemma 6.2 and the definition of $\mathscr{R}_\varphi$.
\end{proof}

\section{The inverse weighted Kazhdan-Lusztig polynomials}
In this section, we recall from \cite{Y16} the construction of $\left \{ Q_{x,y}\mid x,y\in E_J \right \}$ and give combinatorial formulas for those polynomials, which are similar to $\left \{ P_{x,y} \mid x,y\in E_J\right \}$. This also extends the results of \cite{T01} and \cite{D97}.

Let $y\in E_J$, the formula for $C_{y}$ introduced in Remark 4.9 may be rewritten as
\begin{center}
  $q_{y}^{1/2}C_{y}=\sum\limits_{x\leqslant y, x\in E_J}^{}\epsilon_{x}\epsilon_{y}P_{x,y}q_{x}\overline {\Gamma _{x}}$,
\end{center}
and inverting this gives
\begin{center}
  $q_{y}\overline {\Gamma _{y}}=\sum\limits_{x\leqslant y,x\in E_J}^{}Q_{x,y}q_{x}^{1/2}C_{x}$,
\end{center}
where $Q_{x,y}$ is given recursively by
\begin{center}
  $  \sum\limits_{x\leqslant t\leqslant y, t\in E_J}^{}\epsilon _{t}\epsilon _{y}Q_{x,t}P_{t,y}=\delta _{x,y}$.
\end{center}

\begin{proposition}
There exists a unique family of polynomials $\left \{ Q_{x,y}\in \mathbb{Z}[\Gamma' ]\mid x,y\in E_J \right \}$ satisfying $Q_{x,y}=0$ if $x\nleqslant y$, $Q_{x,x}=1$ and
\begin{center}
$\bm{0}\leqslant \mathsf{deg}  \left(P_{x,y}\right)<\frac{L(y)-L(x)}{2}$.
\end{center}
\end{proposition}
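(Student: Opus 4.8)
The plan is to deduce all four assertions — existence, uniqueness, membership in $\mathbb{Z}[\Gamma']$, and the degree bounds (which, like those in Corollary 4.8, are to be read for $x<y$, since $\mathsf{deg}(Q_{x,x})=\mathsf{deg}(1)=\bm{0}$) — from a single induction on $\ell(y)-\ell(x)$, running in complete parallel with the proofs of Proposition 4.7 and Corollary 4.8. The entry point is to isolate the $t=y$ summand in the defining identity
$\sum_{x\leqslant t\leqslant y,\,t\in E_J}\epsilon_t\epsilon_y Q_{x,t}P_{t,y}=\delta_{x,y}$;
since $P_{y,y}=1$ and $\epsilon_y^2=\bm{1}$, this rewrites as
$Q_{x,y}=\delta_{x,y}-\sum_{x\leqslant t<y,\,t\in E_J}\epsilon_t\epsilon_y Q_{x,t}P_{t,y}$.
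For $x=y$ this forces $Q_{x,x}=1$; for $x<y$ it expresses $Q_{x,y}$ uniquely through the $Q_{x,t}$ with $\ell(t)-\ell(x)<\ell(y)-\ell(x)$, so the induction yields existence and uniqueness simultaneously. Integrality is immediate from the same formula: the inductive hypothesis gives $Q_{x,t}\in\mathbb{Z}[\Gamma']$, Corollary 4.8 gives $P_{t,y}\in\mathbb{Z}[\Gamma']$, and $\mathbb{Z}[\Gamma']$ is a ring.

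For the upper bound I would apply additivity of $\mathsf{deg}$ to the recursion above, getting $\mathsf{deg}(Q_{x,y})\leqslant\max_{x\leqslant t<y}\bigl(\mathsf{deg}(Q_{x,t})+\mathsf{deg}(P_{t,y})\bigr)$. The term $t=x$ contributes $\mathsf{deg}(Q_{x,x})+\mathsf{deg}(P_{x,y})=\bm{0}+\mathsf{deg}(P_{x,y})<\frac{L(y)-L(x)}{2}$ by Corollary 4.8; for $x<t<y$ the inductive hypothesis gives $\mathsf{deg}(Q_{x,t})<\frac{L(t)-L(x)}{2}$ and Corollary 4.8 gives $\mathsf{deg}(P_{t,y})<\frac{L(y)-L(t)}{2}$, whose sum is $<\frac{L(y)-L(x)}{2}$. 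Hence $\mathsf{deg}(Q_{x,y})<\frac{L(y)-L(x)}{2}$. The lower bound $\bm{0}\leqslant\mathsf{deg}(Q_{x,y})$ splits into two parts: (a) $Q_{x,y}$ involves only powers $q^\gamma$ with $\gamma\geqslant\bm{0}$, which is clear from the recursion since the weighted $R$-polynomials, and hence via Theorem 5.9 the $P$-polynomials, have this property; and (b) $Q_{x,y}\neq\bm{0}$. For (b) I would extract the $q^{\bm{0}}$-coefficient of the defining identity: using $[q^{\bm{0}}]P_{t,y}=[q_t^{-1}q_y]R_{t,y}$ for $t<y$ (Lemma 5.8 and Theorem 5.9(2)) and $[q^{\bm{0}}]P_{y,y}=1$, one reduces the non-vanishing of $Q_{x,y}$ to a statement about the constant-term matrix $\bigl(\epsilon_t\epsilon_{t'}[q^{\bm{0}}]P_{t,t'}\bigr)_{t,t'\in[x,y]\cap E_J}$ and its inverse.

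I expect step (b) to be the only genuinely delicate point, and the crux inside it is controlling $[q^{\bm{0}}]P_{t,y}$, i.e.\ the top coefficient $[q_t^{-1}q_y]R_{t,y}$ of the weighted $R$-polynomial: if this equals $1$ for all $t<y$, then the constant-term matrix is integer-unitriangular and its inverse is as well, and a Möbius-type analysis over the Bruhat interval $[x,y]\cap E_J$ pins down $[q^{\bm{0}}]Q_{x,y}$; in the cases $E_J=W$ and $E_J=D_J$ this recovers the known non-vanishing of the (parabolic) inverse Kazhdan--Lusztig polynomials. Should the chosen normalization of the weighted $R$-polynomials make this direct verification awkward, the clean alternative is to bypass it using the bijection $\eta$ of Section 3 between $M(E_J,L)$ and $\widetilde{M}(E_J,L)$: combining $\eta$ with the Kazhdan--Lusztig basis formula of Remark 4.9 and the inversion displayed immediately before the statement, $Q_{x,y}$ is identified with a $P$-type polynomial attached to $\widetilde{M}(E_J,L)$, whereupon all four assertions follow verbatim from the $\widetilde{M}$-analogue of Corollary 4.8. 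Everything outside step (b) is a routine bookkeeping induction that I would spell out as above.
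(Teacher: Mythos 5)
The paper itself contains no proof of this proposition (it is recalled from \cite{Y16}; Lemma 7.2 and the pointers to \cite{L03} and \cite{T01} indicate the intended route), so your argument must stand on its own. Everything you extract directly from the inversion relation $\sum_{x\leqslant t\leqslant y,\,t\in E_J}\epsilon_t\epsilon_y Q_{x,t}P_{t,y}=\delta_{x,y}$ is fine: existence, uniqueness, $Q_{x,x}=1$, vanishing for $x\nleqslant y$, membership in $\mathbb{Z}[\Gamma']$, and the strict upper bound $\mathsf{deg}(Q_{x,y})<\frac{L(y)-L(x)}{2}$ all follow from your induction on $\ell(y)-\ell(x)$ together with Corollary 4.8, exactly as you describe (and you are right that the displayed bound in the statement should read $Q_{x,y}$ and be asserted for $x<y$).

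The genuine gap is the lower bound $\bm{0}\leqslant\mathsf{deg}(Q_{x,y})$, precisely the point you flag as delicate, and neither of your two routes closes it. In your step (b) the decisive inference fails: even granting that the constant-term matrix $\bigl(\epsilon_t\epsilon_{t'}[q^{\bm{0}}]P_{t,t'}\bigr)$ is integer-unitriangular, its inverse is only guaranteed to be integer-unitriangular, and that says nothing about its off-diagonal entries being nonzero; a M\"obius-type alternating sum can perfectly well vanish, so $[q^{\bm{0}}]Q_{x,y}\neq 0$, and a fortiori $Q_{x,y}\neq 0$, does not follow. Moreover the premise $[q_t^{-1}q_y]R_{t,y}=1$ is never verified (Proposition 4.3 only bounds the degree), and your step (a) quietly uses that the $R$- and $P$-polynomials contain no negative exponents, which is likewise not among the stated results (Proposition 4.3 and Corollary 4.8 control only the top degree). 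The fallback via $\eta$ is the right idea in spirit, but you do not carry it out, and the identification you invoke is not literally correct: by Lemma 7.2 the family $Q$ satisfies $q_x^{-1}q_y\overline{Q_{x,y}}=\sum_{x\leqslant t\leqslant y}Q_{x,t}\widetilde{R}_{t,y}$, a recursion running over the second index, whereas the Kazhdan--Lusztig family of $\widetilde{M}(E_J,L)$ satisfies a recursion over the first index; so $Q$ is a transposed or inverse family rather than ``a $P$-type polynomial attached to $\widetilde{M}(E_J,L)$,'' and completing your argument would require proving that recursion (or the precise duality statement) and then rerunning the Proposition 4.7/Corollary 4.8 argument with $\widetilde{R}$ in place of $R$. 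As written, the nonvanishing of $Q_{x,y}$ for $x<y$ --- which the stated lower bound encodes, since the zero polynomial has degree $-\infty$ --- remains unproved.
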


The following is similar to \cite[Section 10]{L03} and \cite[Subsection 3.3]{Y16}. We omit the proof.
\begin{lemma}
Let $x\leqslant y\in E_J$, then
\begin{center}
$q_{x}^{-1}q_{y}\overline{Q_{x,y}}=\sum\limits_{x\leqslant t \leqslant y,t\in E_J}^{}Q_{x,t}\widetilde{R}_{t,y}$.
\end{center}
\end{lemma}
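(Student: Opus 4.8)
The plan is to mimic the proof of Lemma 4.4(1) in the excerpt, working on the dual module $\widetilde{M}(E_J,L)$ rather than on $M(E_J,L)$, and then to dualize. First I would recall from Remark 4.9 (and its analogue for $\widetilde{M}(E_J,L)$, which Yin \cite{Y16} provides) that $\widetilde{M}(E_J,L)$ carries a Kazhdan-Lusztig basis $\{\widetilde{C}_y\mid y\in E_J\}$ characterized by $\overline{\widetilde{C}_y}=\widetilde{C}_y$ together with an expansion of the shape $\widetilde{C}_y=\sum_{x\leqslant y}\epsilon_x\epsilon_y q_x^{-1}q_y^{1/2}\overline{\widetilde{P}_{x,y}}\,\widetilde{\Gamma}_x$, where $\widetilde{P}_{x,y}$ are the weighted Kazhdan-Lusztig polynomials on $\widetilde{M}(E_J,L)$. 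Using the bijection $\eta$ of Proposition 3.3, which intertwines the two bases and commutes with the bar involution, I would first check that $\widetilde{P}_{x,y}=P_{x,y}$ for all $x,y\in E_J$ (this is the $\widetilde{M}$-side counterpart of Lemma 4.4(1) and follows by applying $\eta$ to the defining identity of $C_y$ and matching coefficients), so that the recursion $\sum_{x\leqslant t\leqslant y}\epsilon_t\epsilon_y Q_{x,t}P_{t,y}=\delta_{x,y}$ governing $Q_{x,y}$ is literally the same on both modules.

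Next I would set up the dual recursion for $q_x^{-1}q_y\overline{Q_{x,y}}$. Starting from the inversion formula $q_y\overline{\widetilde{\Gamma}_y}=\sum_{x\leqslant y}Q_{x,y}q_x^{1/2}\widetilde{C}_x$ (the $\widetilde{M}$-analogue of the displayed formula just before Proposition 7.1), apply the bar involution, use $\overline{\widetilde{C}_x}=\widetilde{C}_x$, re-expand $q_x^{1/2}\widetilde{C}_x$ back in the $\widetilde{\Gamma}$-basis, and finally use the definition of the $\widetilde{R}$-polynomials, namely $\overline{\widetilde{\Gamma}_y}=\sum_{x}\epsilon_x\epsilon_y q_y^{-1}\widetilde{R}_{x,y}\widetilde{\Gamma}_x$ (Definition 4.1 transported to $\widetilde{M}$ via Lemma 4.4). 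Collecting the coefficient of each $\widetilde{\Gamma}_x$ and comparing the two expansions of $\overline{\overline{q_y\widetilde{\Gamma}_y}}=q_y^{-1}\overline{q_y^2\widetilde{\Gamma}_y}$ — equivalently, comparing $\widetilde{\Gamma}_y$ expanded directly versus expanded through $\overline{\overline{\widetilde{\Gamma}_y}}$ — yields, after the $\epsilon$- and $q$-bookkeeping, exactly the identity $q_x^{-1}q_y\overline{Q_{x,y}}=\sum_{x\leqslant t\leqslant y,\,t\in E_J}Q_{x,t}\widetilde{R}_{t,y}$. The degree and support constraints $Q_{x,y}=0$ for $x\nleqslant y$, $Q_{x,x}=1$, and the bound from Proposition 7.1 guarantee all sums are finite and that the argument is reversible, so the stated formula follows.

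The main obstacle will be the bookkeeping of the sign factors $\epsilon_x,\epsilon_y$ and the powers of $q_x,q_y$ when passing through the three expansions (the $\widetilde C$-basis, the bar involution, and the $\widetilde R$-definition): one must be careful that the mismatched half-integer powers $q_x^{1/2}$ in the Kazhdan-Lusztig basis cancel correctly against those in the inversion formula, and that $\overline{q_y^{-1}}=q_y$ is applied on exactly the right factors. A secondary point to verify is that the $\widetilde M$-side Kazhdan-Lusztig basis and $\widetilde P$-polynomials behave as claimed; since the excerpt only states this explicitly for $M(E_J,L)$, I would either cite \cite{Y16} directly or note that $\eta$ transports the entire Remark 4.9 package verbatim. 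Once the signs are pinned down the identity is forced, so — as the author says — the proof is routine and may be omitted, but this is the step where an error would most likely creep in.
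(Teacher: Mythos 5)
There is a genuine gap, and it sits at the pivot of your argument: the claim that $\widetilde{P}_{x,y}=P_{x,y}$, and hence that the inversion formula on $\widetilde{M}(E_J,L)$ holds with the same polynomials $Q_{x,y}$. This is false in general. The map $\eta$ of Proposition 3.3 is $\Phi$-twisted, and it does not carry the Kazhdan--Lusztig basis $C_y$ of $M(E_J,L)$ to $\widetilde{C}_y$; it carries it (up to a scalar) to the basis of $\widetilde{M}(E_J,L)$ \emph{dual} to $\{\widetilde{C}_y\}$ --- that is exactly why the inverse polynomials $Q_{x,y}$ enter at all, and ``matching coefficients'' there yields a relation between $P$ and the dual-side data, not $\widetilde{P}=P$. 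Indeed, if $\widetilde{P}=P$ held, then $P$ would satisfy the recursion of Proposition 4.7 simultaneously with $R$ and with $\widetilde{R}$, and unitriangularity of $(P_{x,y})$ would force $R_{x,y}=\widetilde{R}_{x,y}$, contradicting Lemma 4.4(1) and Remark 4.6: for $E_J=D_J$ the two modules produce the two \emph{distinct} specializations $u_s=-1$ and $u_s=q_s$ of the weighted parabolic $R$-polynomials, and correspondingly the two distinct families of parabolic Kazhdan--Lusztig polynomials. Consequently your starting display $q_y\overline{\widetilde{\Gamma}_y}=\sum_x Q_{x,y}q_x^{1/2}\widetilde{C}_x$ is unjustified (it holds with $\widetilde{Q}$, defined from $\widetilde{P}$, in place of $Q$), and running your computation honestly on $\widetilde{M}$ produces the dual statement $q_x^{-1}q_y\overline{\widetilde{Q}_{x,y}}=\sum_t\widetilde{Q}_{x,t}R_{t,y}$, not the lemma.

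The intended argument (this is what the reference to \cite[Section 10]{L03} amounts to) needs no excursion to $\widetilde{M}$: stay on $M(E_J,L)$ and combine the defining relation $\sum_{x\leqslant t\leqslant y}\epsilon_t\epsilon_y Q_{x,t}P_{t,y}=\delta_{x,y}$ with Proposition 4.7 and Lemma 4.4(2). In matrix form over a Bruhat interval, with $D=\mathrm{diag}(q_x)$ and $E=\mathrm{diag}(\epsilon_x)$, Proposition 4.7 reads $D^{-1}\overline{P}D=RP$, the definition of $Q$ reads $QEPE=I$ (so $P^{-1}=EQE$), and Lemma 4.4(2) reads $RE\widetilde{R}E=I$ (so $R^{-1}=E\widetilde{R}E$); inverting the first identity, using that the bar is a ring automorphism, and substituting gives $D^{-1}\overline{Q}D=Q\widetilde{R}$, i.e. $q_x^{-1}q_y\overline{Q_{x,y}}=\sum_{x\leqslant t\leqslant y,\,t\in E_J}Q_{x,t}\widetilde{R}_{t,y}$. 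So only objects already defined on $M$, plus the $R$--$\widetilde{R}$ inversion of Lemma 4.4(2), are needed; no identification of the Kazhdan--Lusztig polynomials on $\widetilde{M}$ with those on $M$ is available or required.
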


Next, we will show some results which can be proved similar to Section 5 and Section 6. Therefore, we describe only the statement of results and the proofs are omitted.

\begin{definition}
Assume that $x\leqslant y\in E_{J}$ and $\varphi \in \mathscr{M}(x,y)$, we define
\begin{center}
  $\mathscr{R}_\varphi^{*}=\left\{\begin{aligned}
   &q_{x}^{-1}q_{y}\overline {R_{x,y}}                                      &\text{if}\ \ell(\varphi)=1 ,\\
   &U_{(L(x_r)-L(x))/2} \left( q_{x}^{-1}q_{x_r}
    \overline{\mathscr{R}_{\varphi^{'}}^{*}}\right) \mathscr{R}_{x_r,y}^{*}     &\text{if}\ \ell(\varphi)\geqslant2.
   \end{aligned}\right.$
\end{center}
where $\varphi' :x=x_0\leqslant  x_{1}\leqslant \cdots \leqslant x_{r}\in \mathscr{M}(x,x_r)$.
\end{definition}

\begin{theorem} Assume that $x,y\in E_J$.
\begin{enumerate}
  \item If $x < y$, then $Q_{x,y}^{}=L_{\frac{L(y)-L(x)}{2} }\left ( \sum\limits_{\varphi\in \mathscr{J}\left ( x,y \right )}^{}  \widetilde{\mathscr{R}}_\varphi^{*}\right )$.
  \item If $x \leqslant y$, then $Q_{x,y}^{}=\sum\limits_{\varphi\in \mathscr{M}\left ( x,y \right )}^{} \widetilde{\mathscr{R}}_\varphi^{*}$.
\end{enumerate}
\end{theorem}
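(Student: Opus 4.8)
The plan is to mirror, on the $\widetilde{M}(E_J,L)$-side, the argument already carried out for $P_{x,y}$ in Theorem 5.9, transporting everything through the duality established in Proposition 3.4 and Lemma 4.4. First I would record the analogue of the characterizing recursion: by Lemma 7.2 together with the defining relation $\sum_{x\leqslant t\leqslant y}\epsilon_t\epsilon_y Q_{x,t}P_{t,y}=\delta_{x,y}$ and Proposition 7.1, the polynomials $Q_{x,y}$ satisfy $q_x^{-1}q_y\overline{Q_{x,y}}=\sum_{x\leqslant t\leqslant y,\,t\in E_J}Q_{x,t}\widetilde{R}_{t,y}$, which is exactly the $\widetilde{R}$-version of Proposition 4.7. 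Applying the bar involution to this identity gives the analogue of Lemma 5.6, namely $Q_{x,y}-\widetilde{\mathscr{R}}^{*}_{x,y}=\sum_{x<t\leqslant y}q_t^{-1}q_y\,\widetilde{\mathscr{R}}^{*}_{x,t}\,\overline{Q_{t,y}}$ once one checks that $\widetilde{\mathscr{R}}^{*}_{x,y}=q_x^{-1}q_y\overline{\widetilde{R}_{x,y}}$ plays the role that $\mathscr{R}_{x,y}=q_x^{-1}q_y\overline{R_{x,y}}$ played before. Here the only genuinely new bookkeeping is that $\mathscr{R}^{*}_\varphi$ is built by \emph{prepending} a factor to a chain $\varphi'\in\mathscr{M}(x,x_r)$ rather than by appending, so in the recursion the summation variable $t$ runs over penultimate rather than second elements of chains; the truncation $U_{(L(x_r)-L(x))/2}$ is positioned accordingly. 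I would re-prove the degree/divisibility bounds of Proposition 5.3 and the vanishing statement of Corollary 5.4 for $\widetilde{\mathscr{R}}^{*}_\varphi$ verbatim, using $\bm 0\leqslant\mathsf{deg}(\widetilde{R}_{x,y})\leqslant L(y)-L(x)$ (Lemma 4.4(1) relating $\widetilde R$ to $R$), which guarantees both sums in the statement are finite.

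With these analogues in place, part (1) is proved by induction on $\ell(y)-\ell(x)$, coefficient by coefficient in $\Gamma''$, exactly as in the proof of Theorem 5.9: the base case $\ell(y)-\ell(x)=1$ comes from $Q_{x,y}-q_x^{-1}q_y\overline{Q_{x,y}}=q_x^{-1}q_y\overline{\widetilde R_{x,y}}$ and the degree bound $\mathsf{deg}(Q_{x,y})<\tfrac{L(y)-L(x)}{2}$ of Proposition 7.1; the inductive step extracts $[q^\gamma]$ from the Lemma-5.6-analogue, substitutes the "upper-half vanishing" of $[q^\xi](q_t^{-1}q_y\overline{Q_{t,y}})$, applies the inductive hypothesis to the pairs $(t,y)$ with $\ell(y)-\ell(t)<\ell(y)-\ell(x)$, and recognizes the resulting double sum as the chain-sum $\sum_{\varphi\in\mathscr J(x,y)}[q^\gamma](\widetilde{\mathscr{R}}^{*}_\varphi)$ minus the single-edge term $[q^\gamma](\widetilde{\mathscr{R}}^{*}_{x,y})$, which yields $Q_{x,y}-q_x^{-1}q_y\overline{Q_{x,y}}=\sum_{\varphi\in\mathscr J(x,y)}\widetilde{\mathscr{R}}^{*}_\varphi$ and hence (1) after applying the $L_{(L(y)-L(x))/2}$ truncation (using Proposition 7.1 to see that $Q_{x,y}$ picks out precisely the low-degree part). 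Part (2) then follows from (1) by the same manipulation as in Theorem 5.9, using the identity $U_{\alpha+\beta}(q^\alpha P)=q^\alpha\overline{L_{-\beta}(\overline P)}$, the definition of $\widetilde{\mathscr{R}}^{*}_\varphi$, and the fact that $\mathscr M(x,y)=\{x\}\cup\bigcup_{x\leqslant t\leqslant y}\{\text{multichains with penultimate }t\}$, matching the prepend-convention used to define $\mathscr{R}^{*}_\varphi$.

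The step I expect to be the main obstacle is verifying that the \emph{prepend} recursion for $\widetilde{\mathscr{R}}^{*}_\varphi$ is genuinely compatible with the recursion coming from $q_x^{-1}q_y\overline{Q_{x,y}}=\sum_t Q_{x,t}\widetilde R_{t,y}$, in which the natural "peeling" is from the \emph{bottom} ($t$ near $x$) rather than from the top. In the $P$-case the recursion $q_x^{-1}q_y\overline{P_{x,y}}=\sum_t R_{x,t}P_{t,y}$ peeled off $R_{x,t}$ at the bottom and $\mathscr{R}_\varphi$ was built by prepending $\mathscr{R}_{x,x_1}$; for $Q$ the roles of source and target are swapped because $Q$ is "dual", so one must check carefully that defining $\mathscr{R}^{*}_\varphi$ by prepending $U_{(L(x_r)-L(x))/2}(q_x^{-1}q_{x_r}\overline{\mathscr{R}^{*}_{\varphi'}})$ to $\widetilde{\mathscr{R}}^{*}_{x_r,y}$ is the correct combinatorial dual, i.e. that the positions of the truncations and the direction of the multichain decomposition line up. The cleanest way to dispatch this is to prove once and for all the "reversal" identity $\widetilde{\mathscr{R}}^{*}_{x,y_0\leqslant\cdots\leqslant y}=\widetilde{\mathscr R}_{\overline\varphi}$-type symmetry by induction, or alternatively to simply carry out the coefficient induction directly and let the combinatorics of $\mathscr F_\gamma(\varphi)$ from Lemma 6.2 (whose $\widetilde R$-analogue is immediate) bookkeep the nesting of inequalities; everything else is a faithful transcription of Sections 5–6 with $R\rightsquigarrow\widetilde R$, $P\rightsquigarrow Q$, $\mathscr R\rightsquigarrow\widetilde{\mathscr R}^{*}$.
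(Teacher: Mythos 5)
Your overall plan---transcribe the proof of Theorem 5.9 with $R\rightsquigarrow\widetilde R$, $P\rightsquigarrow Q$, $\mathscr R\rightsquigarrow\widetilde{\mathscr R}^{*}$, starting from Lemma 7.2---is exactly the route the paper intends (it omits the proof, declaring it ``similar to Sections 5 and 6''). The problem is that your displayed analogue of Lemma 5.7 is wrong, and the error propagates into the structure of your induction. Applying the bar involution to Lemma 7.2 and using $\widetilde{\mathscr R}^{*}_{t,y}=q_t^{-1}q_y\overline{\widetilde R_{t,y}}$ gives
\[
Q_{x,y}-\widetilde{\mathscr R}^{*}_{x,y}\;=\;\sum_{x<t\leqslant y,\; t\in E_J} q_x^{-1}q_t\,\overline{Q_{x,t}}\;\widetilde{\mathscr R}^{*}_{t,y},
\]
i.e.\ the single-edge factor that gets peeled off is the \emph{top} edge $(t,y)$ and the quantity entering the induction is $Q_{x,t}$ (bottom pair), not $\sum_{x<t\leqslant y}q_t^{-1}q_y\,\widetilde{\mathscr R}^{*}_{x,t}\,\overline{Q_{t,y}}$ as you wrote. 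Your formula is equivalent, after applying the bar, to a recursion of the shape $q_x^{-1}q_y\overline{Q_{x,y}}=\sum_t\widetilde R_{x,t}Q_{t,y}$, which is not Lemma 7.2 and is established nowhere; note also that your own remark that the summation variable should run over \emph{penultimate} elements of chains contradicts the formula you displayed, in which $t$ is the second element.

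This is not a cosmetic slip, because the ``genuinely new bookkeeping'' you flag is precisely what gets mishandled. The inductive step you describe---inductive hypothesis applied to the pairs $(t,y)$, together with the vanishing of $[q^{\xi}](q_t^{-1}q_y\overline{Q_{t,y}})$ for $\xi\leqslant\frac{L(y)-L(t)}{2}$---is keyed to the $P$-type recursion; carried out literally it reconstructs chain polynomials obtained by peeling off the \emph{first} edge with truncation thresholds $\frac{L(y)-L(x_1)}{2}$ depending on the top element, i.e.\ $\widetilde{\mathscr R}_{\varphi}$ in the sense of Definition 5.2, not $\widetilde{\mathscr R}^{*}_{\varphi}$ of Definition 7.3, and hence does not yield the stated theorem unless one also proves the ``reversal identity'' you mention and leave open. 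With the corrected identity the bookkeeping lines up without any extra input: induct on $\ell(y)-\ell(x)$, use $\mathsf{deg}(Q_{x,t})<\frac{L(t)-L(x)}{2}$ (Proposition 7.1) to get $[q^{\xi}](q_x^{-1}q_t\overline{Q_{x,t}})=0$ for $\xi\leqslant\frac{L(t)-L(x)}{2}$ and $=[q_x^{-1}q_tq^{-\xi}](Q_{x,t})$ otherwise, and the thresholds $\frac{L(x_r)-L(x)}{2}$ in the truncation $U_{(L(x_r)-L(x))/2}$ of Definition 7.3 are exactly what the computation produces; the base case and the passage from $\mathscr J(x,y)$ to $\mathscr M(x,y)$ via $U_{\alpha+\beta}(q^{\alpha}P)=q^{\alpha}\overline{L_{-\beta}(\overline P)}$ then go through as you outline. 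So the strategy is the right one, but the central lemma and the direction of the induction must be corrected before the proposal constitutes a proof.
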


\begin{theorem}
Assume that $x,y\in E_J$ and $\gamma \in\Gamma''$.
\begin{enumerate}
  \item If $x <y$, then $\left [ q^\gamma  \right ]\left (Q_{x,y}^{}\right )=\left [ q^\gamma \right ]\left (L_{\frac{L(y)-L(x)}{2}}\left ( \sum\limits_{\varphi\in \Psi}^{}  \widetilde{\mathscr{R}}_\varphi^{*}\right )\right )$.
  \item If $x \leqslant y$, then $\left [ q^\gamma \right ]\left (Q_{x,y}^{}\right )=\left [ q^\gamma \right ]\left (\sum\limits_{\varphi\in\Upsilon}^{} \widetilde{\mathscr{R}}_\varphi^{*}\right )$.
\end{enumerate}
\end{theorem}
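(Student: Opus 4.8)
The plan is to mirror exactly the argument used for $P_{x,y}$ in Theorem 5.9, replacing the recursion for $P$ (Proposition 4.7) with the recursion for $Q$ in Lemma 7.3, and replacing the $\mathscr{R}$-polynomials with the dual $\mathscr{R}^{*}$-polynomials. First I would record the analogue of Lemma 5.7: applying the bar involution to the defining identity $q_{x}^{-1}q_{y}\overline{Q_{x,y}}=\sum_{x\leqslant t\leqslant y,\,t\in E_J}Q_{x,t}\widetilde{R}_{t,y}$ and isolating the $t=x$ term yields
\begin{center}
$Q_{x,y}-\widetilde{\mathscr{R}}_{x,y}^{*}=\sum\limits_{x<t\leqslant y,\,t\in E_J}^{}q_{t}^{-1}q_{y}\widetilde{\mathscr{R}}_{x,t}^{*}\,\overline{Q_{t,y}},$
\end{center}
the exact counterpart of Lemma 5.8, together with the fact (from the definition of $\mathscr{R}^{*}_\varphi$, cf.\ Lemma 5.8 in the $\mathscr{R}$ setting) that $[q^{\bm{0}}](\sum_{\varphi\in\mathscr{M}(x,y)}\widetilde{\mathscr{R}}^{*}_\varphi)$ equals $[q_x^{-1}q_y]\widetilde{R}_{x,y}$ when $\ell(\varphi)=1$ and vanishes otherwise. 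One also needs the degree bound $\bm{0}\leqslant\mathsf{deg}(Q_{x,y})<\tfrac{L(y)-L(x)}{2}$ from Proposition 7.1, and the dual versions of Proposition 5.3, Corollary 5.4 and Corollary 5.5 for $\widetilde{\mathscr{R}}^{*}$; these are proved verbatim from Definition 7.4 in the same way as their $\mathscr{R}$-analogues, as the paper already asserts.

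For part (1) I would induct on $\ell(y)-\ell(x)$. The base case $\ell(y)-\ell(x)=1$ comes from applying the bar involution to the $Q$-recursion to get $Q_{x,y}-q_x^{-1}q_y\overline{Q_{x,y}}=q_x^{-1}q_y\overline{\widetilde{R}_{x,y}}=\widetilde{\mathscr{R}}^{*}_{x,y}$, whence $Q_{x,y}=L_{(L(y)-L(x))/2}(\widetilde{\mathscr{R}}^{*}_{x,y})$ by the degree bound. For the inductive step, the skeleton is identical to Theorem 5.9: extract coefficients $[q^\gamma]$ of the identity $Q_{x,y}-\widetilde{\mathscr{R}}^{*}_{x,y}=\sum_{x<t\leqslant y}q_t^{-1}q_y\widetilde{\mathscr{R}}^{*}_{x,t}\overline{Q_{t,y}}$, split the inner bar-polynomial $[q^\xi](q_t^{-1}q_y\overline{Q_{t,y}})$ into its $\xi\leqslant\tfrac{L(y)-L(t)}{2}$ (zero) and $\xi>\tfrac{L(y)-L(t)}{2}$ pieces using the degree bound on $Q_{t,y}$, feed in the inductive hypothesis for the pairs $(t,y)$ with $\ell(y)-\ell(t)<\ell(y)-\ell(x)$, and reassemble using the recursive definition of $\widetilde{\mathscr{R}}^{*}_\varphi$ (Definition 7.4) to recognize $\sum_{x<t<y}\sum_{\varphi'\in\mathscr{J}(t,y)}[q^\gamma](\widetilde{\mathscr{R}}^{*}_{x,\varphi'})$ as $\sum_{\varphi\in\mathscr{J}(x,y)}[q^\gamma](\widetilde{\mathscr{R}}^{*}_\varphi)-\sum_{\varphi\in\mathscr{J}_1(x,y)}[q^\gamma](\widetilde{\mathscr{R}}^{*}_\varphi)$. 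This gives $Q_{x,y}-q_x^{-1}q_y\overline{Q_{x,y}}=\sum_{\varphi\in\mathscr{J}(x,y)}\widetilde{\mathscr{R}}^{*}_\varphi$, and the degree bound from Proposition 7.1 then forces $Q_{x,y}=L_{(L(y)-L(x))/2}(\sum_{\varphi\in\mathscr{J}(x,y)}\widetilde{\mathscr{R}}^{*}_\varphi)$. Part (2) follows from (1) by the same manipulation as in Theorem 5.9: write $Q_{x,y}=\sum_\varphi\widetilde{\mathscr{R}}^{*}_\varphi+q_x^{-1}q_y\overline{L_{(L(y)-L(x))/2}(\sum_\varphi\widetilde{\mathscr{R}}^{*}_\varphi)}$, convert the second term via $U_{\alpha+\beta}(q^\alpha P)=q^\alpha\overline{L_{-\beta}(\overline{P})}$ into $\sum_{\varphi\in\mathscr{J}(x,y)}U_{(L(y)-L(x))/2}(q_x^{-1}q_y\overline{\widetilde{\mathscr{R}}^{*}_\varphi})$, recognize each such term via Definition 7.4 as contributing a prepended repeat, and collect $E_J$-chains plus their prolongations by a leading repetition into $E_J$-multichains, yielding $Q_{x,y}=\sum_{\varphi\in\mathscr{M}(x,y)}\widetilde{\mathscr{R}}^{*}_\varphi$.

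The main obstacle is the asymmetry in Definition 7.4 versus Definition 5.2: the $\mathscr{R}^{*}$-recursion peels off the \emph{last} step $x_r\leqslant y$ and truncates the \emph{initial} segment, whereas the $\mathscr{R}$-recursion peels off the first step. Consequently the combinatorial bookkeeping in the reassembly step — matching truncations $U_{(\cdots)/2}$ against bar involutions, and identifying which chain gets prolonged where — runs from the opposite end, so one must be careful that the dual analogues of Proposition 5.3(2) (strictness $x_{i-1}<x_i$), Corollary 5.4 (vanishing for long multichains), and Corollary 5.5 ($\overline{\widetilde{\mathscr{R}}^{*}_{x,y}}=\epsilon_x\epsilon_y q_xq_y^{-1}\mathscr{R}^{*}_{x,y}$) are set up consistently with the $\widetilde{R}$ appearing in Lemma 7.3 rather than $R$. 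Once these dual statements are in place, the induction goes through with no new ideas, which is why the paper is justified in omitting the details.
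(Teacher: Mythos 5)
Your overall plan --- mirror Theorem 5.9, with Lemma 7.2 replacing Proposition 4.7 and $\widetilde{\mathscr{R}}^{*}$ replacing $\mathscr{R}$, then pass to $\Psi$ and $\Upsilon$ via the dual of Corollary 5.4 --- is the intended one, but the key intermediate identity you write down is mis-transposed, and this is precisely where the asymmetry you yourself flag has to be handled. Barring Lemma 7.2, $q_x^{-1}q_y\overline{Q_{x,y}}=\sum_{x\leqslant t\leqslant y}Q_{x,t}\widetilde{R}_{t,y}$, and using $\overline{\widetilde{R}_{t,y}}=q_tq_y^{-1}\widetilde{\mathscr{R}}^{*}_{t,y}$, what actually comes out after isolating the $t=x$ term is
\[
Q_{x,y}-\widetilde{\mathscr{R}}^{*}_{x,y}=\sum_{x<t\leqslant y,\,t\in E_J}q_x^{-1}q_t\,\overline{Q_{x,t}}\;\widetilde{\mathscr{R}}^{*}_{t,y},
\]
in which the unknown polynomial carries the pair $(x,t)$ and the $\mathscr{R}^{*}$-factor carries $(t,y)$. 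Your identity $Q_{x,y}-\widetilde{\mathscr{R}}^{*}_{x,y}=\sum_{x<t\leqslant y}q_t^{-1}q_y\,\widetilde{\mathscr{R}}^{*}_{x,t}\,\overline{Q_{t,y}}$ is the $P$-case with letters renamed; it does not follow from Lemma 7.2 by the derivation you describe (it would require a recursion of the shape $q_x^{-1}q_y\overline{Q_{x,y}}=\sum_t\widetilde{R}_{x,t}Q_{t,y}$, which $Q$ does not satisfy). The error propagates through the whole induction: you feed in the hypothesis for pairs $(t,y)$, split $\overline{Q_{t,y}}$ at the threshold $\frac{L(y)-L(t)}{2}$, and reassemble by \emph{prepending} $x$ to chains $\varphi'\in\mathscr{J}(t,y)$, whereas Definition 7.3 recurses by peeling off the \emph{last} step and truncating the initial segment at $\frac{L(x_r)-L(x)}{2}$. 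So the products you form are not of the shape $\widetilde{\mathscr{R}}^{*}_{\varphi}$; run as written, your argument would produce the first-step-peeling ($\mathscr{R}$-type) polynomials built from $\widetilde{R}$, which is a different family and not the statement. The correct mirror keeps $x$ fixed and lets the second index shrink: split $[q^{\xi}](q_x^{-1}q_t\overline{Q_{x,t}})$ at $\xi\leqslant\frac{L(t)-L(x)}{2}$ using the degree bound of Proposition 7.1, apply the inductive hypothesis to the pairs $(x,t)$ with $t<y$, and recognize each product as $\widetilde{\mathscr{R}}^{*}$ of the chain obtained by \emph{appending} the final step $t\leqslant y$, so that the truncation $U_{(L(t)-L(x))/2}$ matches Definition 7.3 exactly. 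Your base case and your part (2) manipulation survive this correction, with the bar-plus-truncation applied to the initial rather than the final segment.

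A smaller point: the statement you were asked to prove is the coefficient version over the truncated sets $\Psi$ and $\Upsilon$, while your plan targets the preceding theorem (sums over all of $\mathscr{J}(x,y)$ and $\mathscr{M}(x,y)$). That final passage is harmless --- the dual of Corollary 5.4, which you correctly list among the needed dual lemmas, annihilates all multichains of length exceeding $\ell(y)-\ell(x)+1$, and one then simply extracts $[q^{\gamma}]$ --- but it should be stated explicitly, since it is all the quoted theorem adds to the one before it.
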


\begin{theorem}
Assume that $x,y\in E_J$, $\varphi \in \mathscr{M}(x,y)$, $r\in N$ and $\gamma \in\Gamma''$, then
\begin{center}
$\begin{aligned}
\left [ q^\gamma  \right ]\left (Q_{x,y}^{}\right )
=&\left [ q^{\gamma } \right ] \left ( \epsilon _{x}\epsilon _{y}R_{x,y}\right )\\
&+\sum\limits_{r=1}^{\ell(y)-\ell(x)}\sum\limits_{\varphi\in \mathscr{M}\left ( x,y \right )}^{}\sum\limits_{S\in \mathscr{F}_{\gamma }^{*}(\varphi )}^{}\prod\limits_{i=1}^{r}\left [ q_{x}q_{x_{r-i}}^{-1}q^{\lambda_i}q^{\lambda_{i+1}} \right ]\left(\widetilde{R}_{x_{r-i},x_{r+1-i}}\right ),
\end{aligned}$
\end{center}
where we set $S:=(\lambda _0,\lambda _1,\cdots,\lambda _{r+1})$ and
\begin{center}
    $ \begin{aligned}
\mathscr{F}_{\gamma }^{*}(\varphi ) := \{ &(a_0,a_1,\cdots,a_{r+1})\in {(\Gamma'')}^{r+2}\mid \\
              &a_0=L(y)-L(x)-\gamma ,\\
              &\gamma \geqslant a_1> a_2> \cdots>a_r>a_{r+1}=\bm{0}, \\
              &a_i>L(x_{r+1-i})-L(x)-a_i\geqslant a_{i+1}\ for\  i\in\left \{ 1,2,\cdots ,r \right \} \}.
    \end{aligned}$
\end{center}
\end{theorem}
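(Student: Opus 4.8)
The plan is to mirror, on the "inverse" side, the chain of results established in Sections 5 and 6 for $P_{x,y}$, transporting them across the duality $\eta: M(E_J,L)\to\widetilde{M}(E_J,L)$. First I would record the analogue of Lemma 5.7 for the $Q$-polynomials: applying the bar involution to $q_x^{-1}q_y\overline{Q_{x,y}}=\sum_{x\le t\le y}Q_{x,t}\widetilde R_{t,y}$ (Lemma 7.3) yields
\begin{center}
$Q_{x,y}-\mathscr{R}^{*}_{x,y}=\sum\limits_{x< t\le y,\,t\in E_J}q_x^{-1}q_t\,\overline{Q_{x,t}}\,\widetilde{\mathscr{R}}^{*}_{t,y}$,
\end{center}
where the $*$-polynomials defined in Definition 7.4 are built from $\widetilde R$ the way $\mathscr{R}_\varphi$ is built from $R$, only with the multichain traversed from the $y$-end rather than the $x$-end. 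This reversal of direction is exactly why the truncation in Definition 7.4 is $U_{(L(x_r)-L(x))/2}$ acting on $q_x^{-1}q_{x_r}\overline{\mathscr{R}^{*}_{\varphi'}}$ with the "old" factor $\mathscr{R}^{*}_{x_r,y}$ multiplied on the right; the degree bounds of Proposition 5.3 carry over verbatim with $L(x_1)$ replaced by $L(x_r)$, and Corollaries 5.4–5.6 hold with the same proofs. In particular one gets $\overline{\mathscr{R}^{*}_{x,y}}=\epsilon_x\epsilon_y q_x q_y^{-1}\widetilde{\mathscr{R}}^{*}_{x,y}$, which is what makes the $\widetilde{\,\cdot\,}$ appear in the statement of Theorem 7.5.

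Next I would prove Theorem 7.5 by the same double induction used for Theorem 5.9: outer induction on $\ell(y)-\ell(x)$, inner induction on the exponent $\gamma$, extracting the coefficient $[q^\gamma]$ from the displayed recursion above, using Corollary 7.1's degree constraint $\mathsf{deg}(Q_{x,y})<\tfrac{L(y)-L(x)}2$ to kill the low-degree terms, and the analogue of Lemma 5.8 (the constant term of $\sum_{\varphi\in\mathscr M(x,y)}\widetilde{\mathscr R}^{*}_\varphi$) to start the induction. The algebraic identity $U_{\alpha+\beta}(q^\alpha P)=q^\alpha\overline{L_{-\beta}(\overline P)}$ then converts the "chain" formula (1) into the "multichain" formula (2) exactly as at the end of the proof of Theorem 5.9, after the chain-into-multichain reindexing. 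Theorem 7.6 is then the literal coefficient extraction from Theorem 7.5, identical to Corollary 6.1.

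Finally, for Theorem 7.8 I would first establish the inverse analogue of Lemma 6.2: unwinding the recursive definition of $\mathscr{R}^{*}_\varphi$ one layer at a time produces
\begin{center}
$\left[q^\gamma\right]\!\left(\widetilde{\mathscr R}^{*}_\varphi\right)=\sum\limits_{S\in\mathscr F^{*}_\gamma(\varphi)}\prod\limits_{i=1}^{r}\left[q_x q_{x_{r-i}}^{-1}q^{\lambda_i}q^{\lambda_{i+1}}\right]\!\left(\widetilde R_{x_{r-i},x_{r+1-i}}\right)$,
\end{center}
where $\mathscr F^{*}_\gamma(\varphi)$ is the index set in the statement; the only change from $\mathscr F_\gamma(\varphi)$ is that the multichain is read off from the top, so $L(y)-L(x_i)$ is replaced by $L(x_{r+1-i})-L(x)$ and the edge $R_{x_i,x_{i+1}}$ by $\widetilde R_{x_{r-i},x_{r+1-i}}$. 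Then Theorem 7.8 follows by substituting this into Theorem 7.6(2), together with $\mathscr R^{*}_{x,y}=q_x^{-1}q_y\overline{R_{x,y}}$ so that $[q^\gamma](\mathscr R^{*}_{x,y})=[q^{\gamma}](\epsilon_x\epsilon_y R_{x,y})$ — wait, more precisely $[q^\gamma](q_x^{-1}q_y\overline{R_{x,y}})=[q_xq_y^{-1}q^{-\gamma}]\overline{R_{x,y}}=[q^{\gamma}](\epsilon_x\epsilon_y\widetilde R_{x,y})$ via Lemma 4.4(1), and one checks this matches the stated leading term. The main obstacle is bookkeeping: keeping the direction-reversed indexing ($x_{r-i}$ versus $x_{i+1}$, $L(x_r)$ versus $L(x_1)$) consistent across Definition 7.4, the degree estimates, and the index set $\mathscr F^{*}_\gamma$, and verifying that the $\epsilon$-signs produced by Lemma 4.4(1) and by $\overline{\mathscr R^{*}}$ combine to give exactly the $\widetilde R$ (not $R$) and the $\epsilon_x\epsilon_y$ prefactor that appear in the statement. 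No genuinely new idea beyond Sections 5–6 is needed; the work is in certifying that every substitution $x\leftrightarrow y$, $R\leftrightarrow\widetilde R$ is performed coherently.
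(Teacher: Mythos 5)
Your route is the one the paper itself intends: the proof of this theorem is omitted in the paper precisely because it is the Section 5--6 argument transported to the inverse side, and your plan --- the $Q$-analogue of Lemma 5.7 derived by barring the recursion of Lemma 7.2, the $\mathscr{R}^{*}$-analogues of Proposition 5.3, Corollaries 5.4--5.6 and Lemma 5.8, the same double induction for the chain and multichain formulas, and finally the analogue of Lemma 6.2 unwinding $\widetilde{\mathscr{R}}^{*}_{\varphi}$ one layer at a time to produce the index set $\mathscr{F}^{*}_{\gamma}(\varphi)$ --- is exactly that parallel, and the direction-reversed bookkeeping you describe is the right one.

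One slip should be repaired, and it is exactly at the point where you hesitated. Since Lemma 7.2 involves $\widetilde{R}$, the recursion and the expansion of $Q_{x,y}$ must be phrased throughout in terms of $\widetilde{\mathscr{R}}^{*}$ (as in the paper's Theorem 7.4), so the left-hand side of your displayed identity should read $Q_{x,y}-\widetilde{\mathscr{R}}^{*}_{x,y}$, and the length-one term feeding the leading coefficient is
\begin{center}
$\widetilde{\mathscr{R}}^{*}_{x,y}=q_x^{-1}q_y\,\overline{\widetilde{R}_{x,y}}=\epsilon_x\epsilon_y R_{x,y}$,
\end{center}
by Lemma 4.4(1), which matches the stated term $\left[q^{\gamma}\right]\left(\epsilon_x\epsilon_y R_{x,y}\right)$ on the nose. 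Your final check instead evaluates $\mathscr{R}^{*}_{x,y}=q_x^{-1}q_y\overline{R_{x,y}}=\epsilon_x\epsilon_y\widetilde{R}_{x,y}$ and asserts that this agrees with the statement; it does not in general, since it has $\widetilde{R}$ where the theorem has $R$. With $\widetilde{\mathscr{R}}^{*}$ used consistently the discrepancy disappears and the rest of your argument, including the product formula over $\mathscr{F}^{*}_{\gamma}(\varphi)$ with the $\widetilde{R}_{x_{r-i},x_{r+1-i}}$ factors, goes through as you describe.
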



\begin{thebibliography}{99}

\bibitem{BB05}
{\sc A. Bjorner, F. Brenti},
Combinatorics of Coxeter groups, Graduate Texts in Mathematics, 231.
{\it Springer}, New York, 2005.

\bibitem{B94}
{\sc F. Brenti},
A combinatorial formula for Kazhdan-Lusztig polynomials.
{\it Invent. Math.}, {\bf 118} (1994), no. 2, 371--394.

\bibitem{D87}
{\sc V. Deodhar},
On some geometric aspects of Bruhat orderings. II: The parabolic analogue of Kazhdan-Lusztig polynomials.
{\it J. Algebra}, {\bf 111} (1987), no. 2, 483--506.

\bibitem{D91}
{\sc V. Deodhar},
Duality in parabolic set up for questions in Kazhdan-Lusztig theory.
{\it J. Algebra}, {\bf 142} (1991), no. 1, 201--209.

\bibitem{D97}
{\sc V. Deodhar},
J-chains and multichains, duality of Hecke modules, and formulas for parabolic Kazhdan-Lusztig polynomials.
{\it J. Algebra}, {\bf 190} (1997), no. 1, 214--225.

\bibitem{HN12}
{\sc R.B. Howlett, V. Nguyen},
$W$-graph ideals.
{\it J. Algebra}, {\bf 361} (2012), 188--212.

\bibitem{KL79}
{\sc D. Kazhdan, G. Lusztig},
Representations of Coxeter groups and Hecke algebras.
{\it Invent. Math.}, {\bf 53} (1979), no. 2, 165--184.

\bibitem{L83}
{\sc G. Lusztig},
Left cells in Weyl groups. Lecture Notes in Math., 1024,
{\it Springer}, Berlin, 1983.

\bibitem{L03}
{\sc G. Lusztig},
Hecke algebras with unequal parameters. CRM Monograph Series, 18.
{\it American Mathematical Society}, Providence, RI, 2003.

\bibitem{T99}
{\sc H. Tagawa},
A construction of weighted parabolic Kazhdan-Lusztig polynomials.
{\it J. Algebra}, {\bf 216} (1999), no. 2, 566--599.

\bibitem{T01}
{\sc H. Tagawa},
Some properties of inverse weighted parabolic Kazhdan-Lusztig polynomials.
{\it J. Algebra}, {\bf 239} (2001), no. 1, 298--326.

\bibitem{Y15}
{\sc Y. Yin},
$W$-graphs for Hecke algebras with unequal parameters.
{\it Manuscripta Math.}, {\bf 147} (2015), no. 1-2, 43--62.

\bibitem{Y16}
{\sc Y. Yin},
W-graph ideals and duality.
{\it J. Algebra}, {\bf 453} (2016), 377--399.
\end{thebibliography}
\end{document}